\newcommand{\R}{\mathbb{R}}
\newcommand{\N}{\mathbb{N}}
\renewcommand{\P}{\mathbb{P}}
\renewcommand{\d}{\,d}
\newcommand{\id}[1]{1_{#1}}
\newcommand{\restr}[2]{#1\!_{\restriction #2}}
\DeclarePairedDelimiter\abs{\lvert}{\rvert}%
\DeclarePairedDelimiter\norm{\lVert}{\rVert}%
\let\oldabs\abs
\def\abs{\@ifstar{\oldabs}{\oldabs*}}
\let\oldnorm\norm
\def\norm{\@ifstar{\oldnorm}{\oldnorm*}}
\newcommand{\fullstop}{\text{ .}}
\NewDocumentCommand{\lcro}{d<>m}{\IfValueTF{#1}{#1}{\left}[ #2 \IfValueTF{#1}{#1}{\right})}
\newtheorem{theorem}{Theorem}
\numberwithin{theorem}{section}
\newtheorem{lemma}[theorem]{Lemma}
\theoremstyle{definition}
\newtheorem{definition}[theorem]{Definition}
\newtheorem{remark}[theorem]{Remark}
\def\fcmp{\mathbin{\raise 0.6ex\hbox{\oalign{\hfil$\scriptscriptstyle \mathrm{o}$\hfil\cr\hfil$\scriptscriptstyle\mathrm{9}$\hfil}}}}
\let\mcmp\fcmp
\setlist[enumerate]{label=(\arabic*)}
\renewcommand{\d}{\,\mathrm d}
\newcommand{\Prob}{\mathscr P}
\RenewDocumentCommand{\Pr}{d<>m}{\Prob\IfValueTF{#1}{#1}{\!\left}(#2\IfValueTF{#1}{#1}{\right})}
\NewDocumentCommand{\Prp}{d<>m}{\Prob\!_p\IfValueTF{#1}{#1}{\!\left}(#2\IfValueTF{#1}{#1}{\right})}
\NewDocumentCommand{\SubP}{d<>m}{\Prob^\leq\IfValueTF{#1}{#1}{\left}(#2\IfValueTF{#1}{#1}{\right})}
\NewDocumentCommand{\E}{d<>omo}{\mathbb E \IfValueTF{#2}{^{#2}}{} \IfValueTF{#1}{#1}{\left}( #3 \IfValueTF{#4}{ \IfValueTF{#1}{#1}{\middle}| #4}{} \IfValueTF{#1}{#1}{\right})}
\DeclareMathOperator{\dis}{dis}
\newcommand{\disint}[2]{\dis_{#1}^{#2}}
\NewDocumentCommand{\br}{d<>mmm}{\IfValueTF{#1}{#1}{\left}#2 #4 \IfValueTF{#1}{#1}{\right}#3}
\NewDocumentCommand{\pa}{d<>m}{\IfValueTF{#1}{#1}{\left}( #2 \IfValueTF{#1}{#1}{\right})}
\NewDocumentCommand{\push}{d<>m}{\Prob\IfValueTF{#1}{#1}{\!\left}(#2\IfValueTF{#1}{#1}{\right})}
\NewDocumentCommand{\FunP}{d<>mm}{\mathscr F \IfValueTF{#1}{#1}{\left}( #2 \rightsquigarrow #3 \IfValueTF{#1}{#1}{\right})}
\DeclareMathOperator\domain{dom}
\NewDocumentCommand{\dom}{d<>m}{\domain\! \IfValueTF{#1}{#1}{\left}( #2 \IfValueTF{#1}{#1}{\right}) }
\DeclareMathOperator{\Couplings}{Cpl}
\NewDocumentCommand{\Cpl}{d<>mm}{\Couplings \IfValueTF{#1}{#1}{\left}( #2, #3 \IfValueTF{#1}{#1}{\right}) }
\NewDocumentCommand{\set}{d<>mo}{\IfValueTF{#1}{#1}{\left}\{ #2 \IfValueT{#3}{\,\IfValueTF{#1}{#1}{\middle}|\, #3} \IfValueTF{#1}{#1}{\right}\}}
\newcommand{\noarg}{\_}
\DeclareMathOperator{\proj}{proj}
\DeclareMathOperator{\undis}{int}
\newcommand{\und}[2]{\undis_{#1}^{#2}}
\DeclareMathOperator{\bary}{bary}
\newcommand{\Nmap}{\mathcal N}
\newcommand{\FFF}[1][1]{\mathcal F_{#1}}
\newcommand{\PPP}[1][1]{\mathcal P_{#1}}
\newcommand\J[3]{\mathcal J_{#1,#2}^{#3}}
\newcommand{\Xb}{\overline X}
\renewcommand\H{\mathcal H}
\newcommand\funprod{\setbox0\hbox{$\prod$}\rlap{\hbox to \wd0{\hss$\circ$\hss}}\box0}
\newcommand{\I}{\mathcal I}
\newcommand{\K}{\mathcal K}
\renewcommand{\L}{\mathcal M}
\let\epsilon\varepsilon
\newcommand{\dpro}[1]{\mathbin{\tikz[x=0.6em,y=0.3em,baseline=-0.4em]{ \node[inner sep=0pt,anchor=center] at (0,0) {${\scriptstyle\otimes}$}; \node[anchor=north,inner sep=0pt] at (0,-1) {${\scriptstyle#1}$}; }}}
\newcommand{\mcmo}[1]{\mcmp_{#1}}
\newcommand{\marg}[2]{#1_{\restriction #2}}
\newcommand{\mprod}{\otimes}
\def\P{{\mathbb P}}
\def\R{{\mathbb R}}
\newcommand{\Law}{\mathscr L}
\newcommand{\Extw}{\mathcal E}
\newcommand{\Lp}{L_p}
\NewDocumentCommand{\Lpnorm}{d<>m}{\IfValueTF{#1}{#1}{\left}\lVert#2\IfValueTF{#1}{#1}{\right}\rVert_{\Lp}}
\newcommand{\cpl}{\Couplings}
\newcommand{\cpla}{\Couplings_c}
\newcommand{\cplba}{\Couplings_{bc}}
\newcommand{\WA}{\mathcal W}
\newcommand{\AWA}{\mathcal {AW}}
\newcommand{\CWA}{\mathcal {CW}}
\newcommand{\SWA}{\mathcal {SCW}}
\newcommand{\SCWA}{\mathcal{SCW}}
\newcommand{\IWA}{\mathcal{IW}}
\newcommand{\ND}{\mathcal{ND}}
\newcommand{\Wp}{\mathcal W_p}
\newcommand{\AWp}{\mathcal A \mathcal W_p}
\newcommand{\CWp}{\mathcal C \mathcal W_p}
\newcommand{\SCWp}{\mathcal S \mathcal C \mathcal W_p}
\renewcommand{\S}{\mathcal}
\newcommand{\X}{\S X}
\newcommand{\Y}{\S Y}
\newcommand{\Z}{\S Z}
\newcommand{\A}{\S A}
\newcommand{\B}{\S B}
\newcommand{\C}{\S C}
\newcommand{\Xs}{\S X}
\newcommand{\Ys}{\S Y}
\newcommand{\Zs}{\S Z}
\newcommand{\As}{\S A}
\newcommand{\Bs}{\S B}
\newcommand{\Cs}{\S C}
\newcommand{\D}{\rho}
\def\sbcorr#1#2{\def\tmpa{#1}\def\tmpb{#2}\futurelet\next\sbcorrA}
\def\sbcorrA{\ifx\next_\expandafter\sbcorrB\fi}
\def\sbcorrB_#1{_{\mkern\tmpa#1}\futurelet\next\sbcorrC}
\def\sbcorrC{\ifx\next^\expandafter\sbcorrD\fi}
\def\sbcorrD^#1{^{\mkern\tmpb#1}}
\newcommand{\overbar}[1]{{}\mkern 3.5mu\overline{\mkern-3.5mu#1\mkern-0.5mu}\mkern 0.5mu\sbcorr{-3mu}{0mu}}
\renewcommand{\Xb}{\overbar{\Xs}}
\newcommand{\Yb}  {\overbar{\Ys}}
\newcommand{\Zb}  {\overbar{\Zs}}
\newcommand{\identity}[1]{#1}
\newcommand{\dosmash}{\let\optsmash\smash}
\newcommand{\dontsmash}{\let\optsmash\identity}
\newcommand{\XXs}{\optsmash{\vec \Xs}}
\newcommand{\YYs}{\optsmash{\vec \Ys}}
\newcommand{\ZZs}{\optsmash{\vec \Zs}}
\newcommand{\RV}{}
\newcommand{\Xr}{\RV X}
\newcommand{\Yr}{\RV Y}
\newcommand{\Zr}{\RV Z}
\newcommand{\Ar}{\RV A}
\newcommand{\CCr}{\RV \smash{\hat C}}
\newcommand{\XXr}{\RV {\smash{\hat X_2}}}
\newcommand{\YYr}{\RV {\smash{\hat Y_2}}}
\newcommand{\ZZr}{\RV {\smash{\hat Z_2}}}
\newcommand{\VYr}{\RV \smash{\vec Y}}
\newcommand{\VZr}{\RV \smash{\vec Z}}
\newcommand{\VXXr}{\RV {\smash{\hat{\vec X}}}}
\newcommand{\overb}[3]{{}\mkern#1mu\overline{\mkern-#1mu#3\mkern-#2mu}\mkern#2mu}
\newcommand{\Wcau}{\mathcal C \mathcal W_p}
\NewDocumentCommand{\Wc}{d<>mm}{\Wcau \IfValueTF{#1}{#1}{\left}( #2, #3 \IfValueTF{#1}{#1}{\right}) }
\newcommand{\WWcau}{\overb{5}{6}{\mathcal C \mathcal W_p}}
\NewDocumentCommand{\WWc}{d<>mm}{\WWcau \IfValueTF{#1}{#1}{\left}( #2, #3 \IfValueTF{#1}{#1}{\right}) }
\newcommand{\Wcac}{\mathcal S \mathcal C \mathcal W_p}
\NewDocumentCommand{\Wca}{d<>mm}{\Wcac \IfValueTF{#1}{#1}{\left}( #2, #3 \IfValueTF{#1}{#1}{\right}) }
\newcommand{\WWcac}{\overb{5}{6}{\mathcal S \mathcal C \mathcal W_p}}
\NewDocumentCommand{\WWca}{d<>mm}{\WWcac \IfValueTF{#1}{#1}{\left}( #2, #3 \IfValueTF{#1}{#1}{\right}) }
\newcommand{\Wbc}{\mathcal A \mathcal W_p}
\NewDocumentCommand{\Wb}{d<>mm}{\Wbc \IfValueTF{#1}{#1}{\left}( #2, #3 \IfValueTF{#1}{#1}{\right}) }
\newcommand{\WWbc}{\overb{8}{7}{\mathcal A \mathcal W_p}}
\NewDocumentCommand{\WWb}{d<>mm}{\WWbc \IfValueTF{#1}{#1}{\left}( #2, #3 \IfValueTF{#1}{#1}{\right}) }
\newcommand{\Wa}{\mathcal W_p}
\NewDocumentCommand{\W}{d<>mm}{\Wa \IfValueTF{#1}{#1}{\left}( #2, #3 \IfValueTF{#1}{#1}{\right}) }
\newcommand{\Wbb}{\Wbc}
\newcommand{\Wcaa}{\Wcac'}
\newcommand{\diso}[1]{\dis_{#1}}
\newcommand{\undo}[1]{\undis_{#1}}
\newcommand{\tsint}{{\textstyle \int}}
\newcommand{\tsiint}{{\textstyle \iint}}
\newcommand{\ExpS}{\mathbb E}
\newcommand{\ellp}{\ell_p}
\newcommand{\funcomma}{\bm,}
\newcommand{\XP}[1]{\X_{#1:N}}
\newcommand{\XF}[1]{\X^{\mathscr F}_{#1:N}}
\newcommand{\DP}[1]{\D_{#1:N}}
\NewDocumentCommand{\CLaw}{d<>omo}{\Law \IfValueTF{#2}{^{#2}}{} \IfValueTF{#1}{#1}{\left}( #3 \IfValueTF{#4}{ \IfValueTF{#1}{#1}{\middle}| #4}{} \IfValueTF{#1}{#1}{\right})}%
\newcommand{\Xv}[2]{\bar X_{#1}^{#2}}%
\newcommand{\gobble}[1]{}
\author[J.\ Backhoff-Veraguas]{Julio Backhoff-Veraguas}
\author[D.\ Bartl]{Daniel Bartl}
\author[M.\ Beiglb\"ock]{Mathias Beiglb\"ock}
\author[M.\ Eder]{Manu Eder}
  \address{Department of Mathematics, University of Vienna, Austria}
  \email[J.\ Backhoff-Veraguas]{julio.backhoff@univie.ac.at}
  \email[D.\ Bartl]{daniel.bartl@univie.ac.at}
  \email[M.\ Beiglb\"ock]{mathias.beiglboeck@univie.ac.at}
  \email[M.\ Eder]{manuel.eder@univie.ac.at}
\date{\today}
\thanks{
J.\ Backhoff-Veraguas gratefully acknowledges financial support from the Austrian Science Fund (FWF) under grant P30750.
D.\ Bartl has been funded by the Vienna Science and Technology Fund (WWTF) through projects VRG17-005 and MA16-021, as well as by the Austrian Science Fund (FWF) through project P28661.
M.\ Beiglboeck gratefully acknowledges financial support by the FWF through grant Y782.
M.\ Eder gratefully acknowledges financial support by the FWF through grant Y782 and by the WWTF through project MA16-021.
}
\title[All Topologies are Equal]{All adapted topologies are equal}
\begin{document}

\begin{abstract}
A number of researchers have introduced topological structures on the set of laws of stochastic processes.  A unifying goal of these authors is to strengthen the usual weak topology in order to adequately capture the temporal structure of stochastic processes.

Aldous defines an extended weak topology based on the weak convergence of prediction processes.
In the economic literature, Hellwig introduced the information topology to study the stability of equilibrium problems.
Bion-Nadal and Talay introduce a version of the Wasserstein distance between the laws of diffusion processes.
Pflug and Pichler consider the nested distance (and the weak nested topology) to obtain continuity of stochastic multistage programming problems.
These distances can be seen as a symmetrization of Lassalle's causal transport problem, but there are also further natural ways to derive a topology from causal transport.

Our main result is that all of these seemingly independent approaches define \emph{the same topology} in finite discrete time. Moreover we show that this `weak adapted topology' is characterized as the coarsest  topology that guarantees continuity of optimal stopping problems for continuous bounded reward functions.
\end{abstract}


\maketitle

{\bf Keywords:} 
Aldous' extended weak topology, Hellwig's information topology, nested distance, causal optimal transport, stability of optimal stopping, Vershik's iterated Kantorovich distance


\section{Introduction}
\subsection{Outline}
If some type of natural phenomenon is modelled through a stochastic process, one might expect that the model does not describe reality in an entirely accurate way.
To be able to study the impact of such inaccuracies on the problems one is trying to solve, it makes sense to equip the set of laws of stochastic processes with a suitable notion of distance or topology.

Denoting by $\Omega := \X^N$ the path space (where $X$ is some Polish space and $N \in \N$), the set of laws of stochastic processes is $\Pr \Omega$, i.e.\ the set of probability measures on $\Omega$.

Clearly, $\Pr \Omega$ carries the usual weak topology.
 However, this topology does not respect the time evolution of stochastic processes which has a number of potentially inconvenient consequences: e.g., problems of optimal stopping / utility maximization / stochastic programming are not continuous, arbitrary processes can be approximated by processes which are deterministic after the first period, etc.
 In the following we describe a number of approaches which have been developed by different authors to deal with these (and related) problems.
 Our main result (Theorem \ref{thm:main_simple}) is that all of these approaches actually define the same topology in the present discrete time setup. Moreover, this topology is the weakest topology which allows for continuity of optimal stopping problems.

\subsection{Adapted Wasserstein distances, nested distance}\label{sec:AWIntro}
A number of authors have independently  introduced variants of the Wasserstein distance which take the temporal structure of processes into account: the definition of `iterated Kantorovich distance' by Vershik \cite{Ve70, Ve94} might be seen as a first construction in this direction.
The topic is also considered by R\"uschendorf \cite{Ru85}.
Independently, Pflug and Pflug--Pichler \cite{PfPi12, Pi13,  PfPi14, PfPi15, PfPi16, GlPfPi17} introduce the nested distance and describe the concept's rich potential for the approximation of stochastic multi-period optimization problems.
Lassalle \cite{Las18} considers the `causal transport problem' that leads to a corresponding notion of distance.
Once again independently of these developments, Bion-Nadal  and Talay \cite{BiTa19} define an adapted version of the Wasserstein distance between laws of solutions to SDEs. 
Gigli \cite[Chapter 4]{Gi04} introduces a similar distance for measures whose first marginal agrees, see also \cite[Section 12.4]{AmGiSa08}.

To set the stage for describing these `adapted' variants let us fix $p \geq 1$ and recall the definition of the usual $p$-Wassterstein distance.

\let\fancypush\push
\newcommand{\simplepush}[1]{#1_{\#}}
\let\push\simplepush
$(\X,\D_\X)$ is now a Polish metric space. On $\Omega = \X^N$ we use the Polish metric $\D_\Omega((x_t)_t, (y_t)_t) := (\sum_t \D_\X(x_t,y_t)^p)^{1/p}$.
Typically, when clear from the context we will omit the subscript for the metric.
We use $(X_t)_t$ to denote the canonical process on $\Omega$, i.e.\ $X_t$ is the projection onto the $t$-th factor of $\Omega = \X^N$.
On $\Omega \times \Omega$ call $X = (X_t)_t$ the projection on the first factor and call $Y = (Y_t)_t$ the projection on the second factor. 
For $\mu, \nu \in \Pr \Omega$ we denote by $\Cpl \mu \nu$ the set of probability measures $\pi$ on $\Omega \times \Omega$ for which $X \sim \mu$ and $Y \sim \nu$ under $\pi$, i.e.\ for which the distribution of $X$ under $\pi$ is $\mu$ and that of $Y$ under $\pi$ is $\nu$.
In applications, a particular role is played by Monge couplings.
A Monge coupling from $\mu$ to $\nu$ is a coupling $\pi$ for which $Y = T(X)$ $\pi$-a.s.\ for some Borel mapping $T:\Omega\to \Omega$ that transports $\mu$ to $\nu$, i.e.\ satisfies $\push T (\mu)=\nu$.

For $\mu, \nu \in \Prp \Omega$, i.e.\ for probability measures on $\Omega$ with finite $p$-th moment their $p$-Wasserstein distance is
\begin{align}\label{eq:UsualW}
\Wp(\mu, \nu):=\inf\left\{\E[\pi]{\D(X,Y)^p}^{1/p}: \pi \in \cpl(\mu, \nu)\right\} \fullstop
\end{align}
Following, \cite{Pr07b} the infimum in \eqref{eq:UsualW} remains unchanged  if one minimizes only over Monge couplings in many situations.

To motivate the formal definition of the adapted cousins in \eqref{eq:SCdW} and \eqref{eq:AdW}  below, we start with an informal discussion in terms of Monge mappings: 
In probabilistic terms,  the preservation of mass assumption  $\push T (\mu)= \nu$ asserts  
\begin{align}\label{MassPreservation} 
\big(T_1(X_1, \ldots, X_N), \ldots, T_N(X_1, \ldots, X_N)\big)   \sim  \nu,
\end{align}
which ignores the evolution of $\mu$ and $\nu$ (resp.) in time. Rather it would appear more natural to restrict to  mappings $(T_k)_{k=1}^N$ which are adapted in the sense that $T_k$ depends only on $X_1, \ldots, X_k$.
Adapted Wasserstein distances can be defined  following precisely this intuition, relying on a suitable version of adaptedness on the level of couplings:

The set $\cpla(\mu, \nu)$ of \emph{causal couplings}
  \footnote{Intuitively, at time $t$, given the past $(X_1, \ldots, X_t) $ of $X$,
    the distribution of $Y_t$ does not depend on the future $(X_{t+1}, \ldots, X_N)$ of $X$.
    For measures $\mu$ such that the first marginal of $\mu$ has no atoms, the weak closure of the set of adapted Monge couplings,
    i.e.\ of those $\pi \in \Cpl \mu \nu$ for which $Y=T(X)$ $\pi$-a.s.\ for $T$ adapted,
    is precisely the set of all causal couplings, see \cite{La18}.}
  consists of all $\pi\in \cpl(\mu, \nu)$ such that
\begin{align}\label{def:adapted.coupling} \pi\big( (Y_1,\dots,Y_t)\in A | X\big)& = \pi\big( (Y_1,\dots,Y_t)\in A | X_1,\dots X_t \big). 
\end{align}
for all $t\leq N$ and $A \subseteq \X^t$ measurable, cf.\ \cite{Las18}.
The set of all \emph{bi-causal} couplings $\cplba(\mu, \nu)$ consists of all $\pi\in\cpla(\mu, \nu)$ such that the distribution of $(Y,X)$ under $\pi$ is also in $\cpla(\nu,\mu)$, i.e.\ that \eqref{def:adapted.coupling} also holds with the roles of $X$ and $Y$ reversed.

The term \emph{causal} was introduced by Lassalle \cite{Las18}, who considers a causal transport problem in which the usual set of couplings is replaced by the set of causal couplings. The resulting concept is not actually a metric as it lacks symmetry, but as suggested by Soumik Pal, this is easily mended and we formally define the \emph{causal -} and \emph{symmetrized-causal $p$-Wasserstein distance}, resp.\ as follows:

For  $\mu, \nu \in \Prp \Omega$ set
\begin{align}
  \label{eq:CdW} 
    \CWA_p(\mu, \nu)  & := \inf\left\{\E[\pi]{\D(X,Y)^p}^{1/p}: \pi \in \cpla(\mu, \nu)\right\} \\
  \label{eq:SCdW}
    \SCWA_p(\mu, \nu) & := \max\pa<\big>{\CWA_p(\mu, \nu) ,\, \CWA_p(\nu, \mu)} \fullstop
\intertext{We use the term \emph{adapted Wasserstein distance} for }
  \label{eq:AdW} 
    \AWA_p(\mu, \nu)  & := \inf\left\{\E[\pi]{\D(X,Y)^p}^{1/p}: \pi \in \cplba(\mu, \nu)\right\} \fullstop
\end{align} 
  R\"uschendorf \cite{Ru85} refers to $\AWA_p$ as `modified Wasserstein distance'. 
  Pflug-Pichler \cite[Definition 1]{PfPi12} use the names multi-stage distance of order $p$ and nested distance.
  It can also be considered as a discrete time version of the `Wasserstein-type distance' of Bion-Nadal and Talay \cite{BiTa19}.
  In \cite{BaBaBeEd19a} we use a slightly modified definition of $\AWA_p$ which scales better with the number of time-periods $N$ but leads to an equivalent metric (for fixed $p$ and $N$).
  We shall discuss further properties of $\AWA_p$ (and in particular the connection with Vershik's iterated Kantorovich distance) in Section \ref{sec:DerErhabeneRaum} below.

\subsection{Hellwig's information topology}
\label{sec:HellwigIntro}

The information topology  introduced by Hellwig in \cite{He96} (as well as Aldous' extended weak topology which we discuss next) is based on the idea that an essential part of the structure of a process is the information that we may deduce about the future behaviour of the process given its behaviour up to current time $t$. For a process whose law is $\mu$, this information is captured by the conditional law $\Law^\mu(X_{t+1}, \dots, X_N | X_1=x_1, \dots, X_t=x_t) $ of $X_{t+1}, \dots, X_N$ given $X_1=x_1, \dots, X_t=x_t$ under $\mu$.

$\Law^\mu(X_{t+1}, \dots, X_N | X_1=x_1, \dots, X_t=x_t) $ is also the disintegration $\mu_{x_1,\dots,x_t}$ of $\mu \in \Pr \Omega$ w.r.t.\ the first $t$ coordinates.

Hellwig's \emph{information topology} is the initial topology w.r.t.\ a family of maps $(\I_t)_{t=1}^{N-1}$ which are defined based on these disintegrations:
\begin{align*}
  \I_t & : \Pr \Omega \rightarrow \Pr { \mathcal X ^ t \times \Pr {\mathcal X ^ {N-t}} } \\
  \I_t(\mu) & := \push {k^t}(\mu) \\
  k^t(x_1,\dots,x_N) & := (x_1,\dots,x_t,\mu_{x_1,\dots,x_t})
\end{align*}

Equivalently, $\I_t(\mu)$ is the joint law of 
\begin{align*}
  X_1, \dots, X_t, \Law^\mu(X_{t+1}, \dots, X_N | X_1, \dots, X_t)
\end{align*}
under $\mu$, and Hellwig's information topology is therefore the coarsest topology which makes continuous for all $t$ the maps which send a probability $\mu$ to the joint law describing the evolution of the coordinate process up to time $t$ and the prediction about the future behaviour of the coordinate process after $t$. 

{\color{black}
\begin{remark}
All the topologies we consider in this paper are second countable. As such they can be characterized by saying which sequences converge. Restated in the language of sequences, the above definition says that a sequence $(\mu_n)_n$ in $\mathcal{P}(\Omega)$ converges in Hellwig's information topology to $\mu\in\mathcal{P}(\Omega)$ if and only if, for every $t$, the sequence $(\I_t(\mu_n))_n$ converges to $\I_t(\mu)$ in the usual weak topology on $\Pr { \mathcal X ^ t \times \Pr {\mathcal X ^ {N-t}} }$.
\end{remark}
}

The work of Hellwig \cite{He96} was motivated by questions of stability in dynamic economic models/games; see the related articles \cite{Jo77, VZ02,HeSch02,BarbieGupta}.

\subsection{Aldous' extended weak topology}
Aldous \cite{Al81} introduces a type of convergence for pairs of filtrations and continuous time stochastic processes on them that he calls \emph{extended weak convergence} \cite[Definition 15.2]{Al81}. Restricted to our current setting, his definition can be paraphrased in a similar manner as that of the information topology.
Aldous' idea is to represent a stochastic process with law $\mu$ through the associated \emph{prediction process}\footnote{The  definition of the prediction process goes back at least to Knight \cite{Kn75}.}, that is, the process given by
\begin{align*}
  Z^\mu_0:= \Law(X) = \mu, Z^\mu_1:= \Law^\mu(X | X_1),  \dots, Z^\mu_N:=\Law^\mu(X | X_1, \dots, X_N).
\end{align*}
That is, $(Z^\mu_t)_{t=0}^N$ is a measure-valued martingale that makes increasingly accurate predictions about the full trajectory of the process $X$. 

Rather then comparing  the laws of processes directly, the \emph{extended weak topology} is  derived from the weak topology on the corresponding prediction processes (plus the original processes).
I.e.\ formally, the extended weak topology on $\Pr{\Omega}$ is the initial topology w.r.t.\ the map
\begin{align*}
  \Extw : \Pr \Omega \rightarrow \Pr { \Omega \times \Pr \Omega ^ {N+1} }
\end{align*}
which sends $\mu$ to the joint distribution of
\begin{align*}
   (X, Z^\mu) =\left(  X_1, \dots, X_N, \mu, \Law^\mu(X | X_1), \Law^\mu(X | X_1, X_2), \dots, \Law^\mu(X | X_1, \dots, X_N)\right)
\end{align*}
under $\mu$.

Note that, to stay faithful to Aldous' original definition, we defined $\Extw$ to map $\mu$ not just to the law of the prediction process but to the joint law of the original process and its prediction process. One easily checks that the original process may be omitted in our setting without changing the resulting topology.

\subsection{The optimal stopping topology}

The usual weak topology on $\Pr \Omega$ is the coarsest topology which makes continuous all the functions
\begin{align*}
  \mu \mapsto \tsint f \d\mu
\end{align*}
for $f: \Omega \rightarrow \R$ continuous and bounded.

One may follow a similar pattern and look at the coarsest topology which makes continuous the outcomes of all sequential decision procedures.
Perhaps the easiest way to formalize this is to look at optimal stopping problems.
In detail, 
write $AC(\Omega)$ for the set of all processes $(L_t)_{t=1}^N$ which are adapted, bounded and satisfy that $x\mapsto L_t(x) $ is continuous for each $t\leq N$. 
Write $v^L(\mu)$ for the corresponding value function, given that the process $X$ follows the law $\mu$, i.e.\
\begin{align*}
  v^L(\mu):=\inf\{\E[\mu] {L_\tau}: \tau\leq N\mbox{ is a stopping time}\}.
\end{align*}

The \emph{optimal stopping topology} on $\Pr \Omega$ is the coarsest topology which makes the functions
\begin{align*}
  \mu \mapsto v^L(\mu)
\end{align*}
continuous for all $(L_t)_{t=1}^N \in AC(\Omega)$.

\subsection{Main result}

We can now state our main result:

\begin{theorem}
  \label{thm:main_simple}
  Let $(\X,\D_\X)$ be a Polish metric space, where $\D_\X$ is a bounded metric and set $\Omega := \X^N$. Then the following topologies on $\Pr \Omega$ are equal
  \begin{enumerate}
    \item\label{it:AW}
      the topology induced by $\AWp$
    \item\label{it:SCW}
      the topology induced by $\SCWp$
    \item\label{it:Hellwig}
      Hellwig's information topology
    \item\label{it:Aldous}
      Aldous' extended weak topology
    \item\label{it:optstop}
      the optimal stopping topology.
  \end{enumerate}
\end{theorem}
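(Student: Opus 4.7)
Each of the five topologies is second countable, so it suffices to prove that a sequence $\mu_n \to \mu$ in $\Pr{\Omega}$ converges in any one sense iff it does in any other. I would arrange the proof as a cycle
\[
\AWp \Longrightarrow \SCWp \Longrightarrow \text{opt.\ stop.} \Longrightarrow \text{Hellwig} \Longleftrightarrow \text{Aldous} \Longrightarrow \AWp.
\]
Three arrows are comparatively routine. The implication $\AWp \Rightarrow \SCWp$ is immediate since $\cplba(\mu,\nu) \subseteq \cpla(\mu,\nu)$ for every pair. For $\SCWp \Rightarrow \text{opt.\ stop.}$, I would prove that $v^L$ is Lipschitz in $\SCWp$ for bounded Lipschitz adapted $L$: given $\pi \in \cpla(\mu,\nu)$, causality lets an optimal stopping time for $X$ be lifted to one for $Y$ with the same joint distribution of past paths, so $|v^L(\mu) - v^L(\nu)| \lesssim \mathbb{E}^\pi[\D(X,Y)^p]^{1/p}$; using both directions of $\SCWp$ and a density argument extends this to all $L \in AC(\Omega)$. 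For Hellwig $\Leftrightarrow$ Aldous, I would show that the prediction process and the family of disintegrations encode the same data: $Z^\mu_t$ is the pushforward of $\mu_{X_1,\dots,X_t}$ under the continuous map $m \mapsto \delta_{(X_1,\dots,X_t)} \otimes m$, with the inverse obtained by coordinate projection; hence the initial topologies coincide.

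The two substantial arrows are opt.\ stop.\ $\Rightarrow$ Hellwig and Hellwig $\Rightarrow \AWp$. For opt.\ stop.\ $\Rightarrow$ Hellwig, Snell envelopes are used to probe conditional laws. Fix $t < N$ and bounded continuous $f \geq 0$ on $\Omega$ and $g \geq 0$ on $\X^t$; take $L_s$ a large constant for $s \neq t,N$, $L_t(x_1,\dots,x_t) = c\,g(x_1,\dots,x_t)$, and $L_N = f$. Backward dynamic programming yields $v^L(\mu) = \mathbb{E}^\mu[(c\,g(X_1,\dots,X_t)) \wedge h^f(X_1,\dots,X_t)]$ up to additive constants, where $h^f(x_1,\dots,x_t) := \mathbb{E}^\mu[f(X) \mid X_1=x_1,\dots,X_t=x_t]$. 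Varying $c$ and $g$ over countable dense families determines the joint law of $(X_1,\dots,X_t, h^f(X_1,\dots,X_t))$ under $\mu$, and varying $f$ over a convergence-determining countable family recovers $\I_t(\mu)$ because $(h^f)_f$ determines $\mu_{x_1,\dots,x_t}$ pointwise.

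For Hellwig $\Rightarrow \AWp$ I would proceed by induction on $N$. The case $N=1$ reduces to the weak topology on $\Pr{\X}$, which equals $\Wp$ since $\D_\X$ is bounded. For the inductive step, decompose $\mu = \mu^1 \otimes K$ with Borel kernel $K:\X \to \Pr{\X^{N-1}}$; Hellwig convergence $\mu_n \to \mu$ implies weak convergence in $\Pr{\X \times \Pr{\X^{N-1}}}$ of the laws of $(X_1, K_n(X_1))$ to $(X_1, K(X_1))$, which after a Skorohod-type realisation yields $K_n(x_1) \to K(x_1)$ in the Hellwig topology of $\X^{N-1}$ for $\mu^1$-a.e.\ $x_1$. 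Invoking the inductive hypothesis and a measurable selection of bi-causal couplings $\pi_n^{x_1}$ between $K_n(x_1)$ and $K(x_1)$ with small $\AWp$-cost, together with an optimal $\Wp$-coupling on the first coordinate, I glue into a bi-causal coupling of $\mu_n$ with $\mu$ whose $\AWp$-cost vanishes. The main obstacle of the whole proof is this final construction: producing a jointly measurable selection $x_1 \mapsto \pi_n^{x_1}$ of bi-causal couplings with uniformly small cost, and verifying that the glued object is bi-causal rather than merely causal; this is resolved by the analytic structure of $\cplba$ as a function of its marginals and a careful application of measurable-selection theorems. A secondary difficulty, in the opt.\ stop.\ $\Rightarrow$ Hellwig step, is showing that the family of pointwise-minimum functionals $(c\,g) \wedge h^f$ suffices to identify the conditional law, which is handled by letting $c$ grow to exhaust fiberwise distributions of $h^f$.
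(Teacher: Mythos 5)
Your cycle of implications is sensible and three of the arrows are essentially those of the paper: $\AWp \Rightarrow \SCWp$ is indeed immediate from $\cplba\subseteq\cpla$; the Snell-envelope argument for opt.\ stop.\ is the paper's Lemma \ref{lem:stopping.controlled.by.causal} (the paper uses a randomized stopping time $\sigma(X,u)$ and the causality of the coupling to lift a stopping time for $Y$ to one for $X$ — note the direction — and your $\SCWp$-version works just as well); and opt.\ stop.\ $\Rightarrow$ Hellwig via functionals of the form $\min(c\,g, \E^\mu[f(X)\mid X_1,\dots,X_t])$ is close to the paper's Lemma \ref{lem:stopping.finer.than.information}, though you wave past the nontrivial point — that the resulting family of functions on $\Pr{\A\times\Pr\B}$ is convergence determining — which the paper settles via Cybenko's universal approximation theorem (Lemma \ref{lemv con det 2}). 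The Aldous $\Leftrightarrow$ Hellwig arrow is also right in spirit, but the Hellwig $\Rightarrow$ Aldous direction requires a continuity lemma for the composition operation $\dpro{}$ that the paper imports from the companion work; it is not a one-liner.

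The genuine gap is in Hellwig $\Rightarrow \AWp$, which is the crux of the whole theorem. Your inductive step asserts that weak convergence $\mathscr L^{\mu_n}(X_1, K_n(X_1)) \to \mathscr L^{\mu}(X_1, K(X_1))$ in $\Pr{\X\times\Pr{\X^{N-1}}}$, after a Skorohod realisation, yields $K_n(x_1)\to K(x_1)$ \emph{in the Hellwig topology on $\Pr{\X^{N-1}}$} for a.e.\ $x_1$, so that the inductive hypothesis can be invoked fibre by fibre. This does not follow. Skorohod gives random variables $(\tilde X_1^n,\tilde K_n)\to(\tilde X_1,\tilde K)$ a.s., so the base point moves with $n$ (you cannot fix $x_1$), and more importantly the a.s.\ convergence $\tilde K_n\to\tilde K$ happens in the \emph{weak} topology on $\Pr{\X^{N-1}}$, because that is the topology on the second factor of the initial-topology map $\I_1$. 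But the inductive hypothesis needs Hellwig convergence of the fibres, which is strictly stronger than weak convergence — closing exactly that gap is what the theorem is about. To obtain Hellwig convergence of the fibres you would have to combine information from all of $\I_1,\dots,\I_{N-1}$ and deduce from convergence of the joint laws $\I_t(\mu_n)$ a suitable a.s.\ fibrewise convergence; this is not a Skorohod argument. The paper does not induct on $N$ in this way: it reformulates the problem on the iterated-Wasserstein space $\Pr{\XP 1}$ and proves continuity of a carefully constructed map $\H$ (Lemma \ref{lem:Hcont}), with the real analytic content carried by the modulus-of-continuity Lemma \ref{lem:indkernel}/\cite[Lemma 4.2]{ModulusOfContinuity}, which states: if $\mu$ is concentrated on a graph, $\nu$ is $\delta$-close in Wasserstein, and some coupling $\gamma$ of $\mu,\nu$ moves the $\X$-coordinate by less than $\delta$, then $\gamma$ also moves the $\Y$-coordinate by less than $\epsilon$. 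Your proposal has no substitute for that lemma, and without it the inductive step does not close.
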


The assumption that $\D_\X$ is bounded serves only to simplify the statement of the theorem, because in this case the topology induced by $\Wp$ coincides with the weak topology. For every Polish space there is a bounded complete metric which induces the topology (given any complete metric $\D_\X$, replace it by e.g.\ $\min(1,\D_\X)$).

\subsubsection{$p$-Wasserstein and unbounded metrics}
There is an analogous statement, Theorem \ref{thm:main} below, which drops the assumption that $\D_\X$ is bounded.
To be able to state it, we introduce slight variations of Hellwig's information topology, of Aldous' extended weak topology and of the optimal stopping topology:

In \cite{He96} Hellwig equips the target spaces of $\I_t$ with the weak topology -- or more precisely he equips $\Pr {\X^{N-t}}$ with the weak topology, $\X^t \times \Pr {\X^{N-t}}$ with the product topology and finally $\Pr {\X^t \times \Pr {\X^{N-t}}}$ with the weak topology based on this topology.
One may easily define a $p$-Wasserstein version of Hellwigs information topology by using the recipe \enquote*{replace the weak topology by the $p$-Wasserstein metric everywhere}. Concretely, if we restrict $\I_t$ to $\Prp \Omega$, we may view it as a map into $\Prp { {\X^t \times \Prp {\X^{N-t}}} }$, where the last space carries the metric
\begin{align*}
  \D_{\Prp { {\X^t \times \Prp {\X^{N-t}}} }}(\mu,\nu) & := \inf_{\gamma \in \Cpl \mu \nu} \pa<\Big>{ \tsint \D((x_i)_{i\leq t}, (y_i)_{i \leq t})^p \\
  & \phantom{:=}\quad + \Wp (\hat \mu, \hat \nu)^p \, \d\gamma((x_i)_{i\leq t},\hat \mu, (y_i)_{i\leq t}, \hat \nu) }^{1/p} \fullstop
\end{align*}
We will call the resulting variant of Hellwigs information topology on $\Prp \Omega$ the \emph{$\Wp$-information topology}.

Similarly, one may systematically replace every occurrence of the weak topology in the definition of the extended weak topology by the $p$-Wasserstein metric. We call the resulting topology on $\Prp \Omega$ the \emph{extended $\Wp$-topology}.

Just like the weak topology is the coarsest topology which makes integration of continuous bounded functions continuous, the $p$-Wasserstein topology is the coarsest topology which makes integration of continuous functions bounded by $c \cdot (1 + \D(x_0,x)^p)$ continuous.
Following this analogy, we define $AC_p(\Omega)$ as the set of all processes $(L_t)_{t=1}^N$ which are adapted, bounded by $x \mapsto c \cdot (1 + \D(x_0,x)^p)$ for some $c \in \R_+$ and satisfy that $x\mapsto L_t(x) $ is continuous for each $t\leq N$.

  The \emph{$\Wp$-optimal stopping topology} on $\Prp \Omega$ is the coarsest topology which makes the functions
  \begin{align*}
    \mu \mapsto v^L(\mu)
  \end{align*}
  continuous for all $(L_t)_{t=1}^N \in AC_p(\Omega)$.

With these we may state the following generalization of Theorem \ref{thm:main_simple}:

\begin{theorem}
  \label{thm:main}
  Let $(\X,\D_\X)$ be a Polish metric space and set $\Omega := \X^N$. Then the following topologies on $\Prp \Omega$ are equal
  \begin{enumerate}
    \item the topology induced by $\AWp$
    \item the topology induced by $\SCWp$
    \item the $\Wp$-information topology
    \item the extended $\Wp$-topology
    \item the $\Wp$-optimal stopping topology.
  \end{enumerate}
\end{theorem}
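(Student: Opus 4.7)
The plan is to prove the five topologies coincide by establishing the cycle (i)$\Rightarrow$(ii)$\Rightarrow$(iii)$\Rightarrow$(iv)$\Rightarrow$(v)$\Rightarrow$(i), where each arrow denotes that convergence in the former topology implies convergence in the latter. The link (i)$\Rightarrow$(ii) is immediate, since bicausal couplings are a fortiori causal and hence $\SCWp\le\AWp$. For (ii)$\Rightarrow$(iii), assume $\SCWp(\mu_n,\mu)\to 0$ and take causal couplings $\pi_n\in\cpla(\mu_n,\mu)$ and $\pi_n'\in\cpla(\mu,\mu_n)$ of vanishing $\Wp$-cost; using the defining property \eqref{def:adapted.coupling} one shows that $\I_t$ transports along these couplings with $\Wp$-cost controlled by that of the coupling, hence $\I_t(\mu_n)\to\I_t(\mu)$ in the $\Wp$-sense for each $t$.

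The implications (iii)$\Rightarrow$(iv) and (iv)$\Rightarrow$(v) are essentially continuous-composition arguments. For (iii)$\Rightarrow$(iv), the joint law $\Extw(\mu)$ of the canonical process and its prediction process is reconstructed from the family $\{\I_t(\mu)\}_{t\le N}$ by iterated disintegration, which is continuous in the $\Wp$-sense. For (iv)$\Rightarrow$(v), fix $L\in AC_p(\Omega)$ and perform backward induction on the Snell envelope
\begin{align*}
U_N=L_N,\qquad U_t(x_{1:t}) = \min\!\Big\{L_t(x_{1:t}),\ \tsint U_{t+1}(x_{1:t},y)\,\d\mu_{x_{1:t}}(y)\Big\}.
\end{align*}
Each recursive step is continuous in the extended $\Wp$-data, and the $p$-growth bound built into $AC_p$ preserves integrability in the limit, so $v^L(\mu)=\E[\mu]{U_1(X_1)}$ inherits continuity.

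The main obstacle is the final link (v)$\Rightarrow$(i), which asserts that convergence of optimal stopping values forces convergence in the finest of the five metrics. My plan proceeds in two stages: first, exhibit a rich family of adapted reward processes in $AC_p$ whose stopping values separate the Hellwig disintegrations, so that (v)-convergence already forces (iii); second, starting from Hellwig convergence, construct bicausal couplings $\pi_n\in\cplba(\mu_n,\mu)$ inductively in time, transporting each conditional $\mu_{n,x_{1:t}}$ to $\mu_{x_{1:t}}$ via an optimal $\Wp$-coupling and splicing the resulting kernels across time, so that the telescoping of step-wise costs yields $\AWp(\mu_n,\mu)\to 0$. The chief technical difficulties are (a) designing adapted rewards whose nonlinear stopping values are expressive enough to separate disintegrations in $\Wp$ (since $AC_p$ is rather restricted and $v^L$ is a concave functional of $L$), (b) measurability of the inductive coupling construction, and (c) preserving $p$-integrability under composition of kernels across the $N$ time steps.
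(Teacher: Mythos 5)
Your cyclic decomposition is a reasonable choice of bookkeeping, and the easy arrows are fine, but the two links that carry the real content of the theorem are glossed over in a way that hides genuine obstructions.

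The step (ii)$\Rightarrow$(iii) as you describe it does not work. You propose to take causal couplings $\pi_n\in\cpla(\mu_n,\mu)$, $\pi_n'\in\cpla(\mu,\mu_n)$ of small cost and push them to couplings of $\I_t(\mu_n)$ and $\I_t(\mu)$. The natural attempt is to couple $(x_{1:t},\mu_{n,x_{1:t}})$ with $(y_{1:t},\mu_{y_{1:t}})$ via the marginal of $\pi_n$ on $X_{1:t},Y_{1:t}$, but the conditional law of $\pi_n$ given $(X_{1:t},Y_{1:t})$ is \emph{not} a coupling of $\mu_{n,x_{1:t}}$ with $\mu_{y_{1:t}}$ when $\pi_n$ is only causal (for that one needs bicausality), so the term $\int\W(\mu_{n,x_{1:t}},\mu_{y_{1:t}})^p\,\d\pi_{n,1:t}$ is \emph{not} controlled by the cost of $\pi_n$; and there is no way to combine $\pi_n$ and $\pi_n'$ into a single bicausal object without losing the cost bound. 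This is precisely why the paper does not argue this way. Instead it extends $\Wcau$ to a distance $\WWcau$ on the larger space $\Pr{\X_1\times\Pr{\X_2}}$, proves a triangle inequality, proves Hausdorffness of the symmetrization by a martingale argument (Lemmas \ref{lem:WcTriangle}, \ref{lem:Hausdorff}, \ref{lem:constMartingale}), and then deduces equality of topologies via a relative-compactness argument using Lemma \ref{lem:CompactnessLemma}. No pointwise modulus exists; the argument is intrinsically a compactness argument.

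The second stage of your (v)$\Rightarrow$(i), ``splicing optimal $\Wp$-kernels across time and telescoping,'' has the same hidden gap. At stage $t$ you need $\W(\mu_{n,x_{1:t}},\mu_{y_{1:t}})$ to be small $\gamma^{1:t}$-a.s.\ for the coupling $\gamma^{1:t}$ you already built. Hellwig convergence only tells you the \emph{laws} of $(x_{1:t},\mu_{n,x_{1:t}})$ are close; since the disintegration map is not $\Wp$-continuous (the paper points this out explicitly), you cannot deduce from closeness of laws that the disintegrations line up under a prescribed coupling. This is exactly the content of Lemma \ref{lem:indkernel}, whose proof rests on the modulus-of-continuity estimate Lemma \ref{lem:helper} from the companion paper \cite{ModulusOfContinuity}, and crucially that estimate holds only because the limit measure sits on the graph of a function. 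Your sketch asserts the telescoping ``yields $\AWp(\mu_n,\mu)\to 0$'' but provides no replacement for this mechanism. Finally, a smaller but real slip: in (iii)$\Rightarrow$(iv) you write ``iterated disintegration, which is continuous in the $\Wp$-sense''; disintegration is \emph{not} continuous. What is continuous is the opposite operation $\undis$ and the product $\dpro$ when one factor is a function-graph measure (see the construction of $\Ald$ in Section \ref{sec:Aldous}); the Hellwig data already packages the disintegrations, and the continuous part is only the reassembly.
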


Clearly, one recovers Theorem \ref{thm:main_simple} from Theorem \ref{thm:main} by choosing a bounded metric on $\X$, because the $\Wp$-information topology for bounded $\D_\X$ is just the information topology, the extended $\Wp$-topology for bounded $\D_\X$ is just the extended weak topology and the $\Wp$-optimal stopping topology for bounded $\D_\X$ is just the optimal stopping topology.

The relationship between the topologies listed in Theorem \ref{thm:main_simple} and those listed in Theorem \ref{thm:main} is similar to the non-adapted case where we know that usual $p$-Wasserstein convergence is equivalent to usual weak convergence plus convergence of the $p$-th moments.

\begin{restatable}{lemma}{convergencemoments}
\label{lem:convergence.moments}
  Convergence in any of the topologies of Theorem \ref{thm:main} is equivalent to convergence in any of the topologies of Theorem \ref{thm:main_simple} (where for building $\SCWp$ and $\AWp$, $\D_\X$ is replaced by a bounded compatible complete metric e.g.\ $\min(1,\D_\X)$) plus convergence of $p$-th moments on $\Omega$ w.r.t.\ (the original) $\D_\Omega$.
\end{restatable}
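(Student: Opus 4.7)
The plan is to use Theorems \ref{thm:main} and \ref{thm:main_simple} to pick one topology from each list and reduce the statement to a single convenient pair of metrics. I take the adapted Wasserstein distance as representative on both sides, built from $\D$ for the topology of Theorem \ref{thm:main} and from $\bar\D:=\min(1,\D)$ for the topology of Theorem \ref{thm:main_simple}. The task is then to prove
\begin{align*}
\AWA_p^\D(\mu_n,\mu)\to 0 \quad \Longleftrightarrow \quad \AWA_p^{\bar\D}(\mu_n,\mu)\to 0 \text{ and } \tsint \D_\Omega(x_0,\cdot)^p\,d\mu_n\to \tsint \D_\Omega(x_0,\cdot)^p\,d\mu
\end{align*}
for some (and hence every) fixed $x_0\in\Omega$.

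The forward direction is immediate. Pointwise monotonicity $\bar\D\le\D$ gives $\AWA_p^{\bar\D}\le\AWA_p^\D$; and since bi-causal couplings are in particular couplings, $\Wa\le\AWA_p^\D$, so the classical characterisation of $\Wp$-convergence as weak convergence plus convergence of $p$-th moments supplies both conditions on the right-hand side.

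For the converse I pick near-optimal bi-causal couplings $\pi_n\in\cplba(\mu_n,\mu)$ for $\AWA_p^{\bar\D}$, so that $\tsint \bar\D_\Omega(X,Y)^p\,d\pi_n\to 0$ and in particular $\pi_n(\D(X_t,Y_t)>1)\to 0$ for every $t\le N$. The elementary pointwise bound
\begin{align*}
\D(X_t,Y_t)^p \le \bar\D(X_t,Y_t)^p + 2^{p-1}\left(\D(x_0,X_t)^p+\D(x_0,Y_t)^p\right)\id{\D(X_t,Y_t)>1},
\end{align*}
summed over $t$, reduces matters to proving that each of the tail integrals $\tsint \D(x_0,X_t)^p\id{\D(X_t,Y_t)>1}\,d\pi_n$ and $\tsint \D(x_0,Y_t)^p\id{\D(X_t,Y_t)>1}\,d\pi_n$ vanishes. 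The global moment hypothesis forces the coordinate moments $\int \D(x_0,x_t)^p\,d\mu_n\to\int \D(x_0,x_t)^p\,d\mu$ for each $t$, because each summand is weakly lower semi-continuous and the sum of the summands converges to the sum of the limits. By the classical characterisation of $\Wp$ again, $\{\D(x_0,\cdot)^p\}$ is uniformly integrable under $\{(\mu_n)_t\}_n$; uniform integrability of $\D(x_0,Y_t)^p$ under the fixed marginal $\mu_t$ is automatic. A standard uniform-integrability argument along sets of vanishing $\pi_n$-measure then finishes the job.

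The only genuine obstacle is this last step, namely upgrading the near-optimal bound on $\pi_n$ from the bounded to the unbounded cost. The $p$-th moment assumption is precisely what delivers the uniform integrability needed for the upgrade, and the remaining bookkeeping is routine.
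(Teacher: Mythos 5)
Your argument is correct, but it takes a genuinely different route from the paper. You reduce to the pair $(\AWA_p$ built from $\D_\X$, $\AWA_p$ built from $\bar\D_\X = \min(1,\D_\X))$ and prove the equivalence directly: the forward implication by monotonicity and the domination $\Wp \le \AWA_p$, the backward implication by a pointwise decomposition of $\D_\Omega(X,Y)^p$ into a $\bar\D_\Omega$ part plus a tail on $\{\D(X_t,Y_t)>1\}$, which you kill using uniform integrability of the coordinate moments (extracted from the scalar argument that lower semicontinuity of each coordinate summand plus convergence of the finite sum forces each summand to converge). The paper instead proves the statement for Hellwig's topology: from weak (Hellwig) convergence plus moment convergence it first gets ordinary $\Wp$-convergence, hence relative compactness of $\{\mu_n\}$ in $\Prp{\A\times\B}$, then invokes the compactness lemma (Lemma \ref{lem:CompactnessLemma}, i.e.\ \cite[Lemma 3.3]{ModulusOfContinuity}) to upgrade this to relative compactness of the disintegrations in $\Prp{\A\times\Prp\B}$, and concludes by a subsequence/Hausdorffness argument. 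The trade-off: your proof is more elementary and self-contained (it does not lean on the nontrivial compactness-transfer result from the companion paper), while the paper's route fits the overall machinery that it has already built and reuses elsewhere. One small caveat to keep in mind is the order of dependencies in the paper: at the point where this lemma is proved, the equivalence of items \ref{it:AW}--\ref{it:Aldous} has been established but item \ref{it:optstop} has not, and in fact the proof for \ref{it:optstop} uses part of this lemma; since you reduce to item \ref{it:AW}, this causes no circularity, but it is worth stating explicitly that the reduction to a single representative is only licensed for items \ref{it:AW}--\ref{it:Aldous} at this stage.
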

We prove Lemma \ref{lem:convergence.moments} in Section \ref{sec:convergence.moments}, making use of (parts of) Theorem \ref{thm:main_simple} and Theorem \ref{thm:main}.

\subsection{Further  remarks on related work} 
\subsubsection{Some further articles of successors of Aldous}
 One of the original applications of Aldous' weak extended topology concerned the stability of optimal stopping \cite{Al81}. This corresponds to one half of \ref{it:Aldous}=\ref{it:optstop} in Theorem \ref{thm:main_simple}, but in a much more general setting. This line of work has been continued by Lamberton and Pag\`es \cite{LaPa90}, Coquet and Toldo \cite{CoTo07}, among others.

Aldous' extended weak topology was also inspiring and instrumental for the development of the theory of convergence of filtrations, and the associated questions of stability of the martingale representation property and Doob-Meyer decompositions. In this regard, see the works by Hoover et al \cite{HoKe84,Ho91} and by M\'emin et al \cite{CoMeSl01,Me03}. The related question of stability of stochastic differential equations (as well as their backwards version) with respect to the \textit{driving noise} has particularly seen a burst of activity in the last two decades. For brevity's sake we only refer to the recent article by Papapantoleon, Posama\"i, and Saplaouras \cite{PaPoSa18} for an overview of the many available works in this direction.

\subsubsection{Previous applications of adapted Wasserstein distances.}
 
Pflug, Pichler and co-authors \cite{PfPi12, Pi13,  PfPi14, PfPi15, PfPi16, GlPfPi17} have extensively developed and applied the notion of nested distaces for the purpose of scenario generation, stability, sensitivity bounds, and distributionally robust stochastic optimization, in the context of operations research.

Acciaio, Zalashko, and one of the present authors consider in \cite{AcBaZa16} the adapted Wasserstein distance in continuous time in connection with utility maximization, enlargement of filtrations and optimal stopping. 

Causal couplings have appeared in the work by Yamada and Watanabe \cite{YW}, Jacod and M\'emin \cite{JaMe81} as well as Kurtz \cite{Ku07,Ku14}, concerning weak solutions of stochastic differential equations, and by R\"uschendof \cite{Ru85} concerning approximation theorems in probability theory. The term `causal' is first used by Lassalle \cite{Las18}, who uses it in an additional constraint for the transport problem and gives an alternative derivation of the Talagrand inequality for the Wiener measure. 
 Causal couplings are also present in the numerical scheme suggested in \cite{AcBaCa18} for (extended mean-field) stochastic control.  
 
The article \cite{BaBeHuKa17} connects  adapted Wasserstein distance (in continuous time) to martingale optimal transport  (cf.\ \cite{HoNe12,  BeHePe12, GaHeTo13, DoSo12, BoNu13, HoKl13, CaLaMa14, BeCoHu14, BeNuTo16} among many others). Several familiar objects appear as solutions to variational problems in this context. E.g.\ geometric Brownian motion is the martingale which is closest in $\AWA_2$ to usual Brownian motion subject having a log normal distribution at the terminal time-point, the local vol model is closest to Brownian motion subject to matching 1-d marginals.  
 
Bion-Nadal and Talay \cite{BiTa19} introduce an adapted Wasserstein-type distance on the set of diffusion SDEs and show that this distance corresponds to the computation of a tractable stochastic control problem. They also apply their results to the problem of fitting diffusion models to given marginals. 
 
In \cite{BaBaBeEd19a} the present authors consider adapted Wasserstein distances in relation to stability in finance:
Lipschitz continuity of utility maximization/hedging are established w.r.t.\ to the underlying models in discrete and continuous time.

\subsection{Another formulation of the adapted Wasserstein distance and of Hellwigs information topology}
\label{sec:DerErhabeneRaum}

Here we give an alternative formulation of the adapted Wasserstein distance / nested distance due to  Pflug and Pichler.

Again, $\X$ is a Polish space and $\D = \D_\X$ is a compatible metric on $\X$. Starting with $V_N^p:=0$ we define
\begin{align}\label{valuefunctiongeneral}
	&V_t^p(x_1,\dots,x_{t},y_1,\dots,y_{t}):= \\
	&\inf_{\gamma^{t+1} \in \cpl(\mu_{x_1,\dots,x_{t}}, \nu_{y_1,\dots,y_{t}} )} \iint \! \left( \! \begin{array}{cc}V^p_{t+1}(x_1,\dots,x_{t+1},y_1,\dots,y_{t+1})\\+\ \D(x_{t+1},y_{t+1})^p\end{array} \! \right) \! \d\gamma^{t+1}(x_{t+1}, y_{t+1}).\nonumber
\end{align}
 The nested distance is finally obtained in a backwards recursive way by
\begin{equation}\label{eq:Nested}
  \ND_p(\mu,\nu)^p =\inf_{ \gamma^1 \in \cpl(\push {{\proj_1}} (\mu),\push {{\proj_1}} (\nu)) }
	\iint	\left( 	V^p_1(x_1,y_1)+\ \D(x_1,y_1)^p
		\right)\d\gamma^1(x_1,y_1).
\end{equation}
Then $\AWp = \ND_p$. We refer to \cite{BaBeLiZa16} for the (straightforward) justification.

For $N>1$ the adapted Wasserstein distance is not complete. As was established in \cite{BaBeEdPi17}, a natural complete space into which $(\Prp{\Omega},\AWA_p)$ embeds is given by the space of \emph{nested distributions}:

Consider the sequence of metric spaces
\begin{align*}
	\XP{N}   &:= (\mathcal{X}, \DP{N}), 
  & \DP{N}   & :=\D=(\D^p)^{1/p},\\
	\XP{N-1} &:= \big(\mathcal{X}\times \Prp{\XP{N}},  \DP{N-1}\big),
  & \DP{N-1} & :=\left (\D^p + \WA_{\DP{N},p}^p\right)^{1/p},\\
  & \vdots & \vdots \nonumber\\
	\XP{1}   &:= \big(\X \times \Prp{\XP{2}}, \DP{1}\big),
  & \DP{1}    & :=\left(\D^p + \WA_{\DP{2},p}^p\right)^{1/p},
\end{align*}
where at each stage $t$, the space $\Prp{\XP{t}}$ is endowed with the $p$-Wasserstein distance with respect to the metric $\DP{t}$ on $\XP{t}$, which we denote  by $\WA_{ \DP{t},p}$. 
		The space of \emph{nested distributions} (of depth $N$) is defined as $\Prp{\XP{1}}$.
We endow $\Prp{\XP{1}}$ with the complete metric $\WA_{\DP{1},p}$.

The space of nested distributions was defined by Pflug \cite{Pf09}. Notably the idea to iterate the formation of Wasserstein spaces and metrics  goes back to Vershik \cite{Ve70, Ve94} who uses the name `iterated Kantorovich distance'. The main interest of Vershik (and his successors)  lies in the classification of filtrations (in the language of ergodic theory). We refer to the work of Emery and Schachermayer \cite{EmSc01} for a survey from a probabilistic perspective and to Janvresse, Laurent and de la Rue \cite{JaLaDe16} for a contemporary article (again from a probabilistic viewpoint). 

$\Prp{\Omega}$ is naturally embedded in the set of nested distributions of depth $N$ through the map $\Nmap$ given by
\begin{align}
  \label{eq:Nmapintro}
  &  \Nmap (\mu) := \CLaw{X_1, \CLaw{X_2, \cdots \CLaw{X_{N-1}, \CLaw{X_N}[\Xv 1 {N-1}] }[\Xv 1 {N-2}] \cdots }[X_1]}
\end{align}
where $(X_1,\dots,X_N)$ is a vector with law $\mu$, $\Law$ again denotes (conditional) law and we use $\Xv 1 t$ as a shorthand for the vector $X_1, \dots, X_t$.

Following \cite{BaBeEdPi17}, we have:
\begin{theorem}
  \label{thm:Nembedding}
  The map $\Nmap$ defined in \eqref{eq:Nmapintro} embeds the metric space $(\Prp \Omega,\AWA_p)$ \emph{isometrically} into the complete separable metric space $(\Prp{\XP 1}, \WA_{\DP{1},p})$. 
\end{theorem}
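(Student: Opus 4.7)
The plan is to prove the theorem by induction on $N$. For $N=1$ the tower collapses: $\XP{1} = \X$ with metric $\DP{1} = \D$, the map $\Nmap$ is the identity on $\Prp\Omega$, every coupling is trivially bicausal, and the claim reduces to the tautology $\Wp = \Wp$. For the inductive step from $N-1$ to $N$, the plan is to match the backward recursive formula \eqref{eq:Nested} for $\AWA_p$ against an analogous recursion for $\WA_{\DP{1},p}$ that is hard-wired into the definition of the tower through $\DP{1}^p = \D^p + \WA_{\DP{2},p}^p$.

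The crucial observation is that nested distributions in the image of $\Nmap$ have a rigid structure: under $\Nmap(\mu)$ the $\Prp{\XP{2}}$-coordinate equals the deterministic (measurable) function $x_1 \mapsto \Nmap(\mu_{x_1})$ of the $\X$-coordinate $x_1$, where $\mu_{x_1}$ is the regular conditional distribution of $(X_2,\dots,X_N)$ given $X_1 = x_1$ under $\mu$. Consequently any coupling $\gamma$ of $\Nmap(\mu)$ and $\Nmap(\nu)$ on $\XP{1} \times \XP{1}$ is completely determined by its projection $\gamma^1$ onto $\X \times \X$: the $\Prp{\XP{2}}$-coordinates are forced to equal $\Nmap(\mu_{x_1})$ and $\Nmap(\nu_{y_1})$ for $\gamma^1$-a.e.\ $(x_1,y_1)$.

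Using this observation together with the definition of the $p$-Wasserstein distance on $\Prp{\XP{1}}$, I then obtain
\begin{align*}
  \WA_{\DP{1},p}(\Nmap(\mu),\Nmap(\nu))^p = \inf_{\gamma^1} \iint \pa<\big>{ \D(x_1,y_1)^p + \WA_{\DP{2},p}(\Nmap(\mu_{x_1}),\Nmap(\nu_{y_1}))^p } \d\gamma^1(x_1,y_1),
\end{align*}
where the infimum ranges over couplings of the one-dimensional marginals of $\mu$ and $\nu$. Applying the induction hypothesis to the inner Wasserstein distance replaces it by $\AWA_p(\mu_{x_1},\nu_{y_1})^p$, which is exactly the value $V_1^p(x_1,y_1)$ produced by \eqref{valuefunctiongeneral}. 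Comparing with \eqref{eq:Nested} then yields $\WA_{\DP{1},p}(\Nmap(\mu),\Nmap(\nu)) = \AWA_p(\mu,\nu)$. Injectivity of $\Nmap$ is then automatic, since $\AWA_p \geq \Wp$ and $\Wp$ already separates points on $\Prp{\Omega}$.

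The main technical obstacle is measure-theoretic bookkeeping at each induction step: one must verify that $x_1 \mapsto \Nmap(\mu_{x_1})$ is a Borel map into $\Prp{\XP{2}}$ so that $\Nmap(\mu)$ is a well-defined element of $\Prp{\XP{1}}$, and, in the direction from bicausal couplings to couplings of nested distributions (and back), one has to glue the layer-wise conditional optimizers measurably. These are routine applications of measurable-selection theorems in the Polish setting, but they are the step where care is warranted to ensure that the passage between the two recursive pictures is rigorous.
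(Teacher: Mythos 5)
The paper does not actually prove Theorem \ref{thm:Nembedding} in the text: it is stated as following from \cite{BaBeEdPi17}, and the companion fact $\AWp=\ND_p$ (based on \eqref{valuefunctiongeneral}--\eqref{eq:Nested}) is likewise only cited, to \cite{BaBeLiZa16}. So there is no in-paper proof to compare against, and your proposal has to stand on its own.

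Your outline is structurally sound. The base case $N=1$ is indeed trivial. The key observation --- that under $\Nmap(\mu)$ the $\Prp{\XP{2}}$-coordinate is a.s.\ the deterministic Borel function $x_1\mapsto\Nmap(\mu_{x_1})$ of the $\X$-coordinate, so that every coupling of $\Nmap(\mu)$ and $\Nmap(\nu)$ on $\XP{1}\times\XP{1}$ is determined by its $\X\times\X$-marginal $\gamma^1$ --- is correct, and it does give the reduction
\[
\WA_{\DP{1},p}(\Nmap(\mu),\Nmap(\nu))^p = \inf_{\gamma^1}\iint \pa<\big>{\D(x_1,y_1)^p + \WA_{\DP{2},p}(\Nmap(\mu_{x_1}),\Nmap(\nu_{y_1}))^p}\d\gamma^1(x_1,y_1),
\]
with the infimum over couplings of the time-$1$ marginals. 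Applying the induction hypothesis correctly replaces the inner term by $\AWA_p(\mu_{x_1},\nu_{y_1})^p$.

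The loose end is the next step. You then need the one-step dynamic programming identity
\[
\AWA_p(\mu,\nu)^p = \inf_{\gamma^1}\iint \pa<\big>{\D(x_1,y_1)^p + \AWA_p(\mu_{x_1},\nu_{y_1})^p}\d\gamma^1(x_1,y_1),
\]
and you justify it by asserting that $\AWA_p(\mu_{x_1},\nu_{y_1})^p$ ``is exactly the value $V_1^p(x_1,y_1)$'' and then comparing with \eqref{eq:Nested}. But the identity $V_1^p(x_1,y_1)=\AWA_p(\mu_{x_1},\nu_{y_1})^p$ is itself the inductive substance of $\AWp=\ND_p$ and cannot be read off from the recursion; establishing it requires exactly the work you flag at the end --- disintegrating a bicausal $\pi\in\cplba(\mu,\nu)$ at time $1$ and checking the conditionals are bicausal couplings of $\mu_{x_1}$ and $\nu_{y_1}$, and conversely measurably gluing a $\gamma^1$ with an $\varepsilon$-optimal family of conditional bicausal couplings. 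So the phrasing understates the step: the DPP for bicausal transport is not a free by-product of \eqref{eq:Nested}, it is the lemma that \eqref{eq:Nested} encodes and that the paper itself only cites. If you either invoke the cited $\AWp=\ND_p$ explicitly, or spell out the disintegration/measurable-gluing argument, the proof is complete; the injectivity and well-posedness (Borel measurability of $x_1\mapsto\Nmap(\mu_{x_1})$) remarks are fine.
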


\begin{remark}
  When $\X$ has no isolated points, $\Prp{\XP 1}$ is actually the completion of $\Prp \Omega$, i.e. $\Prp \Omega$ considered as a subset of $\Prp{\XP 1}$ is dense.
\end{remark}

\subsubsection{Hellwig's information topology in terms of adapted Wasserstein distances}
\label{sec:HellwigAdap}
We note that   Hellwig's definition of the information topology can also be rephrased  using the concept of adapted Wasserstein distance:
Assume that $\D_\X$ is a bounded metric and for $t \leq N$, set 
$$\Omega = {\mathcal X}^N=\underbrace{{\mathcal X}^t}_{=:X_1^{(t)}}\times \underbrace{{\mathcal X}^{N-t}}_{=:X_2^{(t)}}=X_1^{(t)} \times X_2^{(t)}.$$ I.e.\  for each $t$, we consider $\Omega$ as the product of two Polish spaces (which one might consider as `history' and `future'). Extending the defintion of $\AWp$ in the obvious way to products of not necessarily equal Polish spaces, we can then equip $\Prp {X_1^{(t)} \times X_2^{(t)}}$ with a \emph{one period} adapted Wasserstein distance $\AWA_{p}^{(t)}, p\geq 1$. 
Setting  for $\mu, \nu\in \mathcal P(\Omega)$ 
\begin{align}
\IWA_p(\mu, \nu):= \sum_{t=1}^N \AWA_{p}^{(t)}(\mu, \nu),\quad  p\geq 1,
\end{align}
we obtain a compatible metric for the information topology. This is relatively straightforward (whereas the full version of Theorem \ref{thm:main_simple} is not straightforward as far as we are concerned).

\subsection{Preservation of Compactness}
We close this section with a result about the preservation of relative compactness which we shall use in Sections \ref{sec:CausalAndAnti} and \ref{sec:convergence.moments}, but which also might be of independent interest. Specifically, in \cite{BaBePa18,BaPa19} the two-step version of Lemma \ref{lem:CompactnessLemma} is used as a crucial tool in the investigation of the weak transport problem.

A more detailed investigation of compactness in $\Pr \Omega$ with the weak adapted topology is the topic of the companion paper to this one, \cite{ModulusOfContinuity}.

Assume for simplicity that $\D_\X$ is a bounded metric. Then we have

\begin{lemma}[Compactness lemma]\label{lem:CompactnessLemma}
  $ A \subseteq \Pr\Omega$ is relatively compact w.r.t.\ the usual weak topology iff $\Nmap[A] \subseteq \Pr{\XP 1}$ is relatively compact.
\end{lemma}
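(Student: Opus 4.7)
I would construct a continuous ``flattening'' map $\Psi\colon\Pr{\XP 1}\to\Pr\Omega$ satisfying $\Psi\circ\Nmap=\operatorname{id}_{\Pr\Omega}$. With $\XP 1=\X\times\Pr{\XP 2}$, define recursively
\[ \Psi(\pi):=\int\bigl(\delta_{x_1}\otimes\Psi'(\eta)\bigr)\,d\pi(x_1,\eta), \]
where $\Psi'\colon\Pr{\XP 2}\to\Pr{\X^{N-1}}$ is the analogous flattening for the $(N-1)$-step process, with the one-step case being the identity. Continuity of $\Psi$ follows inductively from the continuity of $(x,\nu)\mapsto\int f(x,\,\cdot\,)\,d\nu$ on $\X\times\Pr{\X^{N-1}}$ for $f\in C_b(\X^N)$, together with continuity of $\Psi'$; the identity $\Psi\circ\Nmap=\operatorname{id}$ is an immediate unfolding of the two definitions. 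Hence $A=\Psi(\Nmap[A])$ is the continuous image of a relatively compact set and therefore relatively compact.

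\textbf{Hard direction ($\Rightarrow$).} By Prokhorov I reduce relative compactness to uniform tightness, and iterate the following \emph{barycenter lemma}: for any Polish space $Y$, a family $\{\tau_\alpha\}\subseteq\Pr{\Pr Y}$ is uniformly tight if and only if its family of barycenters $\bigl\{\int\eta\,d\tau_\alpha(\eta):\alpha\bigr\}\subseteq\Pr Y$ is uniformly tight. The nontrivial direction is a Markov estimate: given compact $L_n\subseteq Y$ with $\int\eta(L_n^c)\,d\tau_\alpha(\eta)<\delta/(n\cdot 2^n)$ uniformly in $\alpha$, the set $K:=\{\eta\in\Pr Y:\eta(L_n)\ge 1-1/n\text{ for all }n\}$ is closed and uniformly tight (hence compact by Prokhorov), and $\tau_\alpha(K^c)\le\sum_n n\int\eta(L_n^c)\,d\tau_\alpha(\eta)<\delta$ for every $\alpha$. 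To apply this iteratively, for $\mu\in A$ and $1\le t\le N-1$ introduce the averaged measures
\[ \bar\eta^\mu_t:=\int\Nmap'\bigl(\mu_{t+1,\ldots,N\mid x_1,\ldots,x_t}\bigr)\,d\mu_{1,\ldots,t}(x_1,\ldots,x_t)\in\Pr{\XP{t+1}}, \]
where $\Nmap'$ denotes the analogue of $\Nmap$ for the $(N-t)$-step process. A short computation gives that the first marginal of $\bar\eta^\mu_t$ is $\mu_{t+1}$, and that the barycenter of its second marginal equals $\bar\eta^\mu_{t+1}$, with the convention $\bar\eta^\mu_{N-1}=\mu_N$. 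By downward induction on $t$, uniform tightness of $\{\mu_{t+1}:\mu\in A\}$ (which follows from tightness of $A$) combined with the barycenter lemma yields uniform tightness of $\{\bar\eta^\mu_t:\mu\in A\}$ at every level; one final application with $\mu_1$ and $\bar\eta^\mu_1$ shows that both marginals of $\Nmap(\mu)\in\Pr{\X\times\Pr{\XP 2}}$ are uniformly tight, so $\Nmap[A]$ is uniformly tight.

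\textbf{Main obstacle.} The core difficulty is that for tight $A$ the conditional laws $\mu_{t+1,\ldots,N\mid x_1,\ldots,x_t}$ need not be uniformly tight in $(x_1,\ldots,x_t)$, so one cannot recurse on conditionals directly. The barycenter lemma is precisely the Markov-type device that recovers distributional tightness from tightness ``in average'', and the averaged measures $\bar\eta^\mu_t$ are designed so that at each level of the nested structure one averages over exactly one more coordinate of $\mu$; identifying these averages and verifying the compatibility relation between consecutive levels is the bookkeeping that makes the induction go through.
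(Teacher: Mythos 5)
Your proof is correct and takes the route the paper indicates: the paper gives no written-out argument but states that the result follows from the characterization of relatively compact subsets of $\Pr{\Pr \X}$, by repeated application of a one-step compactness lemma from the companion paper, and your barycenter lemma is precisely that one-step ingredient, proved from scratch via the Markov estimate, with the easy direction handled by the (continuous) flattening map $\Psi$, which is the iterated $\undis$ of the paper. The averaged measures $\bar\eta^\mu_t$, together with the two compatibility identities (first marginal of $\bar\eta^\mu_t$ equals $\mu_{t+1}$, barycenter of its second marginal equals $\bar\eta^\mu_{t+1}$), supply exactly the iteration bookkeeping the paper leaves to the cited references.
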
 
We note that Lemma \ref{lem:CompactnessLemma} is essentially a consequence of the characterization of compact subsets in $\Pr{\Pr{X}}$; in a somewhat different framework it was first proved in \cite{Ho92}. The version stated here follows by repeated application of \cite[Lemma 3.3]{ModulusOfContinuity}/\cite[Lemma 2.6]{BaBePa18}.

The implication that $\Nmap[A]$ relatively compact implies $A$ relatively compact is rather easy to see, but the other direction that $A$ relatively compact implies $\Nmap[A]$ relatively compact is nontrivial since the mapping $\Nmap: \Pr\Omega \to\Pr{\mathcal X_{1:N}}$ is not continuous when $\Pr\Omega$ is endowed with the usual weak topology (except for trivial cases).
Lemma \ref{lem:CompactnessLemma} would \emph{not} be true if we were to replace relative compactness by compactness.

The assumption that $\D_\X$ is bounded is inessential. A version of Lemma \ref{lem:CompactnessLemma} holds if we replace $\Pr \Omega$ by $\Prp \Omega$ and the weak topology by the one induced by the $p$-Wasserstein metric.

A similar result based on Hellwig's information toplogy, relating relative compactness in $\Pr \Omega$ to relative compactness in $\prod_{t=1}^{N-1} \Pr {\X^t \times \Pr {\X^{N-t}}}$, is also true.

\let\push\fancypush

\section{Preparations}

The rest of the paper will essentially be devoted to proving Theorem \ref{thm:main_simple}, or really its generalization Theorem \ref{thm:main}.

In Section \ref{sec:arrows} we prove that Hellwig's information topology equals the topology induced by $\AWp$, i.e.\ $\ref{it:Hellwig} = \ref{it:AW}$ in Theorem \ref{thm:main}.
In a sense, of all the topologies listed in Theorem \ref{thm:main}, Hellwig's information toplogy \enquote*{looks} the coarsest -- or at least like one of the coarser ones, while the topology induced by $\AWp$ \enquote*{looks} the finest.

In Section \ref{sec:CausalAndAnti} we sandwich the topology induced by $\SCWp$ between Hellwig's information topology and the toplogy induced by $\AWp$, i.e.\ we show $\ref{it:Hellwig} \leq \ref{it:SCW} \leq \ref{it:AW}$ in Theorem \ref{thm:main}.

In Section \ref{sec:Aldous} we show that Aldous' extended weak topology is equal to Hellwig's information topology, i.e. $\ref{it:Aldous} = \ref{it:Hellwig}$ in Theorem \ref{thm:main}.

In Section \ref{sec:convergence.moments} we prove Lemma \ref{lem:convergence.moments}.

In Section \ref{sec:OptStop} we prove that the optimal stopping topology is coarser than the topology induced by $\AWp$ and finer than Hellwig's ($\Wp$-)information topology, i.e. $\ref{it:Hellwig} \leq \ref{it:optstop} \leq \ref{it:AW}$ in Theorem \ref{thm:main}.

\subsection{Notation}

The nested structure of spaces like for example $\Prp {\XP{1}}$ introduced in Section \ref{sec:DerErhabeneRaum} is (at least for the authors) not so easy to gain an intuition for. It seems rather challenging to picture probability measures on probability measures on probability measures\dots{} etc.

Therefore, much of the proofs in the following two sections will be about bookkeeping and not getting lost in these nested structures.
In most other contexts we would regard such bookkeeping as abstract nonsense better swept under the rug, but in the context of the present paper we believe that it really constitutes an important and nontrivial ingredient in successfully carrying out the proofs.

To aid in this endeavour we make some notational preparations and introduce a few conventions.

\subsubsection{Operations on Spaces}
In the introduction we described the topologies listed in Theorems \ref{thm:main_simple} and \ref{thm:main} as initial topologies w.r.t.\ maps into more complex spaces. These spaces are built up from just a few basic operations, and in most cases the maps can also be constructed using a few relatively simple ingredients.

For spaces, the operations in question are
\begin{itemize}
  \item product formation, i.e.\ for spaces $\X$ and $\Y$ we may form their product space $\X \times \Y$,
  \item and passing from a space $\X$ to the space $\Pr \X$ of probability measures on $\X$.
\end{itemize}

Here we run into some tension between the various existing definitions in the literature. While Hellwig and Aldous originally defined their topologies based on equipping the space $\Pr \X$ of probability measures on some space $\X$ with the weak topology, without any mention of metrics, $\AWp$ is a metric built on the $p$-Wasserstein metric, and Theorem \ref{thm:Nembedding} exhibits this metric as the \enquote*{initial metric} w.r.t.\ an embedding of $\Prp \Omega$ (not $\Pr \Omega$) into $(\Prp{\XP 1}, \WA_{\DP{1},p})$.

Luckily, when the base metric $\D_\X$ on $\X$ is bounded and we decide that we only care about topologies and not the metrics that induce them, all of these distinctions vanish, and one may hope for these fine distinctions to not be so important in the end.

To give as uniform and as streamlined a treatment as possible of all the various ways in which these metric and topological spaces can be related to each other we employ the following strategy:
A lot of our arguments are agnostic to the distinction between $\Prob$ and $\Prob_p$, and to whether we are talking about metric or topological spaces etc. They only rely on properties of the operations of product formation and formation of spaces of probability measures and on properties of maps between various spaces built using these operations which hold in either case.
For the rest of the paper we will therefore drop the $p$ in $\Prob_p$ and other explicit mentions of these distinctions. The reader may decide to read the paper using either of the following two sets of conventions, which are to be applied recursively:

\noindent{\bf Convention 1 (weak topologies)}
\begin{itemize}
  \item $\X$, $\Y$, $\Z$, $\A$, $\B$, $\C$, etc. are Polish spaces.
  \item $\X \times \Y$ is a topological space with the product topology (again Polish).
  \item $\Pr \X$ is a topological space with the weak topology (also Polish).
  \item \enquote*{space} will mean Polish space.
\end{itemize}

\noindent{\bf Convention 2 ($\Wp$)}
\begin{itemize}
  \item $p \geq 1$ is fixed throughout the paper
  \item $\X$, $\Y$, $\Z$, $\A$, $\B$, $\C$, etc. are Polish (i.e.\ complete separable) metric spaces with metrics $\D_\X$, $\D_\Y$, $\D_\Z$, $\D_\A$, $\D_\B$, $\D_\C$, etc.\ respectively.
  \item $\X \times \Y$ is a Polish metric space with the metric
    \begin{align*}
      \D_{\X \times \Y}((x_1,y_1),(x_2,y_2)) := \pa{\D_\X(x_1,x_2)^p + \D_\Y(y_1,y_2)^p}^{1/p} \fullstop
    \end{align*}
  \item $\Pr \X$ is a Polish metric space with the $p$-Wasserstein metric
    \begin{align*}
      \D_{\Pr \X}(\mu,\nu) := \inf_{\gamma \in \Cpl \mu \nu} \pa{ \tsint \D(x_1,x_2)^p \d\gamma(x_1,x_2) }^{1/p} \fullstop
    \end{align*}
  \item The subscript on the metric $\D$ may be dropped when clear from the context.
  \item \enquote*{space} will mean Polish metric space.
\end{itemize}

Unless specified otherwise everything said from here on will be true for either way of reading.
Convention 1 will lead to a direct proof of Theorem \ref{thm:main_simple}, while Convention 2 will give a proof of the more general version, Theorem \ref{thm:main}.
Occasionally an argument will require us to talk directly about metrics to establish continuity of some map. When one only cares about Theorem \ref{thm:main_simple} and not Theorem \ref{thm:main} these sections can be read while assuming that $p=1$ and that all metrics mentioned are bounded.

Another space we will need is
\begin{definition}
  $ \FunP \A \B \subseteq \Prob(\A \times \B) $ is the space of probability measures on $\A \times \B$ which are concentrated on the graph of a measuruable function, i.e.:
  \begin{multline*}
    \FunP \A \B := \set<\Big>{ \mu \in \Prob(\A \times \B) }[ \exists f : \A \rightarrow \B \text{ measurable s.t. } \mu(\mathrm{graph}(f)) = 1 ] \fullstop
  \end{multline*}
  The space $\FunP \A \B$ carries the subspace topology / the restriction of the metric on $\Pr {\A \times \B}$.
\end{definition}

\subsubsection{Maps between spaces}
Assuming Convention 1, when $f: \X \to \Y$ is a continuous map, the pushforward under $f$, i.e.\ the map which sends $\mu \in \Pr \X$ to the measure $\nu \in \Pr \Y$ with $\nu(A) = \mu(f^{-1}[A])$ is also continuous.

Similarly, assuming Convention 2, when $f: \X \to \Y$ is a Lipschitz-continuous map between metric spaces the pushforward under $f$ is also Lipschitz-continous from $\Pr \X$ to $\Pr \Y$.

We will use $\push f: \Pr \X \to \Pr \Y$ to denote the pushforward under $f$, to emphasize the fact that $\Prob$ is a \emph{functor}, i.e.\ that it sends a diagram with a \enquote*{nice} (read continuous/Lipschitz) map
\begin{align*}
  \X \overset{f}\longrightarrow \Y
\end{align*}
to a similar diagram
\begin{align*}
  \Pr \X \overset{\push f}\longrightarrow \Pr \Y
\end{align*}
where the map is also \enquote*{nice}, and that $\push {f \circ g} = \push f \circ \push g$ and $\push {\id\X} = \id{\Pr \X}$ (where $\id\X$ is the identity function on $\X$).

For a product of spaces $\X \times \Y$, the projection onto $\X$ will alternatively be denoted by either $\proj_\X$ or by the same letter that is used for the space, but in a non-calligrapic font, i.e. $X : \X \times \Y \to \X$.

If $\mu$ is defined on some product $\prod_i \Xs_i$ of spaces, we also introduce a shorthand notation for marginals of $\mu$, i.e.\ for the pushforward of $\mu$ under projection onto the product of some subset of the original factors:
\begin{align*}
  \marg \mu {(\Xs_{i_j})_j} = \push {(\Xr_{i_j})_j} (\mu) \fullstop
\end{align*}

If $f: \A \rightarrow \B$ and $g: \A \rightarrow \C$ are functions we write $(f \funcomma g)$ for the function
\begin{align*}
  (f \funcomma g) & : \A \rightarrow \B \times \C \\
  (f \funcomma g) (a) & := (f(a),g(a)) \fullstop
\end{align*}

If we want to specify a map from, say $\A \times \B \times \C$ to $\X$ but we only really care about one of the variables we will use an underscore \enquote*{$\noarg$} instead of naming the unused variables, as in $(a,\noarg,\noarg) \mapsto f(a)$. Similarly, when integrating we may also use $\noarg$ to denote unused variables, i.e. for $\mu \in \Pr {\X \times \Y}$ we might write $\tsint f(y) \d\mu(\noarg,y)$.

Two important maps will be the disintegration map $\disint \A \B$ and its left inverse $\und \A \B$.

The disintegration map
  $$ \disint \A \B : \Prob(\A \times \B) \rightarrow \FunP \A {\Prob(\B)} $$
sends a probability $\mu$ on $\A \times \B$ to the measure
  $$ \push<\big>{(a,\noarg) \mapsto (a,\mu_a)}(\mu) $$
where $a \mapsto \mu_a$ is a classical disintegration of $\mu$, i.e. if $\bar \mu = \disint \A \B (\mu)$ then
  $$ \int f(a,b) \d\nu(b) \d\bar\mu(a,\nu) = \int f(a, b) \d\mu_a(b) \d\mu(a,\noarg) = \int f(a,b) \d\mu(a,b) \fullstop$$
The disintegration map is measurable (see for example \cite[Proposition 7.27]{BeSh78}) and injective. It is not continuous w.r.t.\ the weak topologies or the Wasserstein metrics.

When writing $\disint {\S A} {\S B}$ we will not insist that $\S A$ has to be the first factor in the domain of $\disint {\S A} {\S B}$ -- $\S A$ and $\S B$ may even be products themselves, whose factors are intermingled in the product that makes up the domain of $\disint {\S A} {\S B}$. Also, we may sometimes omit $\S B$, only specifying the variable(s) w.r.t.\ which we are disintegrating, not the ones which are left over, as in $\diso {\S A}$.

The map
\begin{align*}
  \und \A \B & : \Pr { \A \times \Pr \B } \rightarrow \Pr { \A \times \B } \\
  \und \A \B & (\mu) := f \mapsto \int f(a,b) \d\nu(b) \d\mu(a,\nu)
\end{align*}
is (Lipschitz-)continuous.

The pair $\disint \A \B$, $\und \A \B$ enjoy the following properties:
\begin{enumerate}
  \item $\und \A \B$ is the left inverse of the disintegration map, i.e.\ 
    $$ \und \A \B \circ \disint \A \B = \id {\Pr { \A \times \B }} \fullstop $$
    This is a direct consequence of the definition of the disintegration.
  \item $\restr{\und \A \B}{\FunP \A {\Pr \B}}$ is injective. Therefore,
  \item $\disint \A \B \circ \restr{\und \A \B}{\FunP \A {\Pr \B}} = \id { \FunP \A {\Pr \B} }$, i.e.\ $\disint \A \B$ and $\und \A \B$ are inverse bijections between $\Pr { \A \times \B}$ and $ \FunP \A {\Pr \B} $.
\end{enumerate}
The last two properties are just a reformulation of the known fact that the disintegration of a measure is almost-surely uniquely defined.

\subsubsection{Processes which take values in different spaces at different times}

Already in the introduction, in Section \ref{sec:HellwigAdap}, we found it convenient to extend the definition of $\AWp$ to products of not necessarily equal Polish spaces \enquote*{in the obvious way}. To accommodate for reapplication of concepts in a similar style as seen there we make the minor generalization of letting all the processes we talk about take values in different spaces at different times -- typically at time $t$ they will take values in a space $\X_t$.

Denote by $\Xb_j^k := \prod_{i=j}^k \X_i $ and define $\Xb := \Xb_1^N$, $\Xb^k := \Xb_1^k$, $\Xb_j := \Xb_j^N$.

\section{Hellwig's \texorpdfstring{$\Wp$-}{Wp-}information topology is equal to the topology induced by \texorpdfstring{$\AWp$}{the adapted Wasserstein distance}}
\label{sec:arrows}

In this section we show $\ref{it:Hellwig} = \ref{it:AW}$ in Theorem \ref{thm:main}. We will do so by identifying both topologies as initial topologies w.r.t.\ a single map each, i.e.\ finding a space which is homeomorphic to $\Pr \Xb$ with Hellwig's ($\Wp$-)information topology and one which is homeomorphic to $\Pr \Xb$ with the topology induced by $\AWp$ and then showing that these spaces are homeomorphic in the right way. As an auxilliary tool we will introduce another topology on $\Pr \Xb$ which wasn't mentioned in the introduction, but which is very similar to Hellwig's.
The proof strategy can be summarized by saying that we want to show that the following diagram is commutative.
\begin{align}
  \label{diag:contmaps}
\begin{tikzpicture}[x=3cm,y=2cm,baseline={([yshift=-.5ex]current bounding box.center)}]
  \node (PX)   at  (0,1) {$ \Pr {\Xb} $};
  \node (FFF)  at  (1,0) {$ \FFF $};
  \node (I)    at  (0,0) {$ \I\br[]{\Pr {\Xb}} $};
  \node (Ip)   at (-1,0) {$ \I'\br[]{\Pr {\Xb}} $};
  \draw[->] (PX)  edge node[auto] {$ \Nmap $} (FFF);
  \draw[->] (PX)  edge node[auto] {$ \I $} (I);
  \draw[->] (PX)  edge node[left] {$ \I' $} (Ip);
  \draw[->,out=300,in=240] (Ip) edge node[below] {$ \H $} (FFF);
  \draw[->] (FFF) edge node[auto] {$ \K $} (I);
  \draw[->] (I)   edge node[auto] {$ \L $} (Ip);
\end{tikzpicture}
\end{align}
Here $\Nmap$ is the map which induces the same topology as $\AWp$, $\I$ induces Hellwig's topology and $\I'$ induces what we will call the \emph{reduced} information topology. We shortly restate their definitions below.

{\color{black}
Since these mappings are injective and by the definition of the initial topology all of these mappings are homeomorphisms. To be precise, $\Nmap$ is a homeomorphism from ${\Pr {\Xb}}$ with the topology induced by $\AWp$ onto $\Nmap[{\Pr {\Xb}}]$ {(cf.\ Theorem \ref{thm:Nembedding})},  $\I$ is a homeomorphism from ${\Pr {\Xb}}$ with the information topology onto $\I[{\Pr {\Xb}}]$, and 
$\I'$ is a homeomorphism from ${\Pr {\Xb}}$ with the reduced information topology  onto $\I'[{\Pr {\Xb}}]$.
}

 The maps $\K$, $\L$, $\H$ are still to be found.

As introduced in Section \ref{sec:HellwigIntro} Hellwig's ($\Wp$-)information topology is induced by a family of maps $\I_t$, given by:
\begin{align*}
  \I_t & : \Pr {\Xb} \rightarrow \FunP {\Xb^t} {\Pr{ \Xb_{t+1} }} \\
  \I_t & := \disint {\Xb^t} {\Xb_{t+1}} \fullstop
\end{align*}
Equivalently, the information topology is the initial topology w.r.t.\ the map
\begin{align*}
  \I & : \Pr {\Xb} \rightarrow \prod_{t=1}^{N-1} \FunP {\Xb^t} {\Pr{ \Xb_{t+1} }} \\
  \I(\mu) & := (\I_t(\mu))_t \fullstop
\end{align*}
We saw in Section \ref{sec:DerErhabeneRaum} that $\AWp$ is induced by an embedding $\Nmap: \Pr \Xb \to \Pr {\XP 1}$. Rephrasing the definition there, $\Nmap$ is obtained by defining recursively from $t=N-1$ to $t=1$:
\begin{align*}
  \Nmap^{N}   & := \id{\Pr \Xb} \\
  \Nmap^{t}   & := \disint {\Xb^t} {\XP{t+1}} \circ \Nmap^{t+1}
  \intertext{and setting}
  \Nmap & := \Nmap^1 \fullstop
\end{align*}
In fact, because $\dis$ maps into the space of measures concentrated on the graph of a function, $\Nmap$ also maps into a smaller space, which we call $\FFF[1]$, and which is again defined by recursion down from $N-1$ to $1$:
\begin{align*}
  \FFF[N] & := \Pr {\X_N} \\
  \FFF[t] & := \FunP {\X_{t}} {\FFF[t+1]} \fullstop
\end{align*}
I.e.\ $\FFF$ is $\Pr {\XP 1}$ with all occurences of $\Pr {\cdot \times \cdot}$ replaced by $\FunP \cdot \cdot$.
Remember that we had
\begin{align*}
  \XP N & := \X_N \\
  \XP t & := \X_t \times \Pr {\XP {t+1}} \fullstop
\end{align*}
For convenience, let us also define
\begin{align*}
  \PPP[t] & := \Pr {\XP t} \fullstop
\end{align*}
\gobble{
For symmetry we also define
\begin{align*}
  \PPP[t] & := \Pr {\XP t}
  \intertext{and}
  \XF N & := \X_N \\
  \XF t & := \X_t \times \Pr {\XF {t+1}} \fullstop
\end{align*}
}

The fact that
\begin{align*}
  \Nmap^t : \Pr \Xb \to \FunP {\Xb^t} {\FFF[t+1]}
\end{align*}
and that therefore $\Nmap$ maps into $\FFF$ is a consequence of Lemma \ref{lem:functionsremainfunctions} below.

Finally, $\I'$ is defined as follows
\begin{align*}
  \I' & : \Pr {\Xb} \rightarrow \prod_{t=1}^{N-1} \FunP {\Xb^t} {\Pr{ \X_{t+1} }} \\
  \I'(\mu) & := (\I'_t(\mu))_t \\
  \I'_t & : \Pr {\Xb} \rightarrow \FunP {\Xb^t} {\Pr{ \X_{t+1} }} \\
  \I'_t & := \disint {\Xb^t} {\X_{t+1}} \circ \push{\proj_{\Xb^{t+1}}} \fullstop
\end{align*}
I.e. the reduced information topology, like the information topology, makes continuous predictions about the behaviour of the process after time $t$ given information about its behaviour up to time $t$, only now we are just predicting what the process will do in the next step, not for the rest of time.

$\I$, $\I'$ and $\Nmap$ are injective and therefore bijections onto their codomains. This means that the values of the maps $\K$, $\L$, $\H$ in diagram \eqref{diag:contmaps} as functions between sets are really already prescribed. The task consists in finding a representation for them which makes it clear that they are continuous.

\begin{lemma}
  \label{lem:functionsremainfunctions}
  $\disint \A {\B \times \Y}$ restricted to $\FunP {\A \times \B} \Y$ maps onto $\FunP<\!\big> \A { \FunP \B \Y }$.
\end{lemma}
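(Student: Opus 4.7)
The plan is to prove the two inclusions that together make up \enquote{maps onto}, leveraging that $\disint \A {\B \times \Y}$ is already known to be a bijection from $\Pr{\A \times \B \times \Y}$ onto $\FunP \A {\Pr{\B \times \Y}}$ with inverse $\und \A {\B \times \Y}$. It therefore suffices to verify that this bijection restricts to a bijection between the subspaces $\FunP{\A \times \B}{\Y}$ and $\FunP \A {\FunP \B \Y}$.

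For the inclusion $\disint \A {\B \times \Y}\br[]{\FunP{\A \times \B}{\Y}} \subseteq \FunP \A {\FunP \B \Y}$: assume $\mu \in \FunP{\A \times \B}{\Y}$ is concentrated on $\mathrm{graph}(f)$ for some measurable $f : \A \times \B \to \Y$. By definition of the disintegration, for $\marg{\mu}{\A}$-a.e.\ $a \in \A$ the fiber $\mu_a \in \Pr{\B \times \Y}$ is concentrated on $\set{(b, y) \in \B \times \Y}[ y = f(a, b) ]$, which is precisely the graph of the section $f(a, \cdot) : \B \to \Y$. Hence $\mu_a \in \FunP \B \Y$ almost surely, and since $\disint \A {\B \times \Y}(\mu)$ is by construction concentrated on the graph of the measurable map $a \mapsto \mu_a$, it lives in $\FunP \A {\FunP \B \Y}$.

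For the converse (surjectivity): given $\nu \in \FunP \A {\FunP \B \Y}$, set $\mu := \und \A {\B \times \Y}(\nu) \in \Pr{\A \times \B \times \Y}$, so that automatically $\disint \A {\B \times \Y}(\mu) = \nu$. It remains to show $\mu \in \FunP{\A \times \B}{\Y}$. Let $g : \A \to \FunP \B \Y$ be the measurable function such that $\nu$ is concentrated on $\mathrm{graph}(g)$; then $\mu_a = g(a)$ for $\marg{\mu}{\A}$-a.e.\ $a$. Iterated disintegration (a standard tower/Fubini property of regular conditional probabilities) gives $\mu_{a,b} = (\mu_a)_b$ for $\mu$-a.e.\ $(a,b)$. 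Since $g(a) \in \FunP \B \Y$ is a graph measure, its further disintegration along $\B$ is $\delta_{h_a(b)}$ for $\marg{g(a)}{\B}$-a.e.\ $b$, so $\mu_{a,b}$ is a Dirac measure for $\mu$-a.e.\ $(a,b)$. The set of Dirac measures is Borel in $\Pr \Y$ and Borel-isomorphic to $\Y$ via $\delta_y \leftrightarrow y$, so defining $f(a, b)$ to be the support point of $\mu_{a,b}$ where the latter is Dirac (and arbitrarily elsewhere) yields a measurable $f : \A \times \B \to \Y$ with $\mu\br[]{\mathrm{graph}(f)} = 1$, as required.

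The main obstacle is purely bookkeeping: one must justify the iterated disintegration identity $\mu_{a,b} = (\mu_a)_b$ and the measurable extraction of $f$ from a Dirac-valued measurable map into $\Pr \Y$. Both are standard (the latter essentially via the Borel isomorphism $\Y \cong \set{\delta_y}[y \in \Y] \subseteq \Pr \Y$, or alternatively via Kuratowski--Ryll-Nardzewski selection), but care is needed to ensure that all almost-everywhere statements are compatible so that the exceptional null sets can be absorbed into the \enquote{arbitrary} values of $f$ without affecting $\mu\br[]{\mathrm{graph}(f)} = 1$.
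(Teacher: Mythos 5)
Your argument is correct and follows essentially the same route as the paper: the first inclusion is a direct fiberwise computation (the paper spells it out via an indicator-function integral, you state it directly from the disintegration identity, and these are equivalent), and for surjectivity both you and the paper set the candidate preimage to be $\und\A{\B\times\Y}$ applied to the given element and then appeal to measurable disintegrations. The one real difference is how much is said about the surjectivity direction: the paper compresses the whole measurability argument into the single phrase \enquote{by the existence of measurably dependent (classical) disintegrations}, whereas you unpack it -- iterated disintegration to get $\mu_{a,b} = (\mu_a)_b$ a.e., observing $\mu_{a,b}$ is Dirac a.e., and then extracting $f$ via the Borel isomorphism $y \leftrightarrow \delta_y$. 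That unpacking is precisely what the terse reference is gesturing at, so you have filled in a gap the paper leaves to the reader rather than taken a genuinely different route.
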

\begin{proof}
  We first show that it maps into $\FunP<\big> \A { \FunP \B \Y }$.
  Let $\nu \in \FunP<\big> {\A \times \B} \Y$ and let $g : \A \times \B \rightarrow \Y$ be a function witnessing this fact, i.e.\ $\nu(f) = \int f(a,b,g(a,b)) \d \nu(a,b,\noarg)$.

  Let $\alpha := \disint \A {\B \times \Y} (\nu)$. Then
  $$\int \int 1_{g(a,b) \neq y} \d \beta(b, y) \d\alpha(a, \beta) = \int 1_{g(a,b) \neq y} \d \nu(a,b,y) = 0 \fullstop$$
  This means that for $\alpha$-a.a.\ $(a,\beta)$ we have $ \int 1_{g(a,b) \neq y} \d \beta(b, y) = 0 $, i.e.\ $\beta$ is concentrated on the graph of the function $b \mapsto g(a,b)$.

  To see that any $\alpha \in \FunP<\big> \A { \FunP \B \Y }$ can be obtained as the image of some $\nu \in \FunP {\A \times \B} \Y$ under $\disint \A {\B \times \Y}$, note that for such $\alpha$, by the existence of measurably dependent (classical) disintegrations (see for example \cite[Proposition 7.27]{BeSh78}), $\nu := \und \A {\B \times \Y} (\alpha) \in \FunP {\A \times \B} \Y$, and $\disint \A {\B \times \Y} (\nu) = \alpha$.
\end{proof}

\subsection{Homeomorphisms}
We give a plain language description of what follows in this section:

The continuity of $\L$ will be quite trivial, because we are just discarding information.

The components $\K_k : \FFF \rightarrow \FunP {\Xb^k} {\Pr {\Xb_{k+1}}}$ of the map $\K$ are obtained by \enquote*{folding} both the \enquote*{head} and the \enquote*{tail} of $\FFF$ using iterated application of the map $\undis$.
\begin{align*}
  \overbrace{ \mathscr F \Bigg( \X_1 \rightsquigarrow \mathscr F \bigg ( \dots \rightsquigarrow \mathscr F \Big ( \X_k }^{\text{head}} \rightsquigarrow \overbrace{\mathscr F \big ( \X_{k+1} \rightsquigarrow \mathscr F ( \dots \rightsquigarrow \mathscr P ( \X_N ) \dots ) \big ) \!\!\vphantom{\Bigg|}}^{\text{tail}} \,\, \Big ) \dots \bigg ) \Bigg )
\end{align*}
By continuity of $\undis$, it's easy to see that $\K_k$ is continuous.
To show that the map $\K$ with the components $\K_k$ is the map we are looking for, we basically show that 
\begin{align}
\label{eq:temp1321}
\I^{-1} \circ \K_k = \Nmap^{-1} \fullstop
\end{align}
$\Nmap^{-1}$ is again another way of \enquote*{folding} all of $\FFF$ using $\undis$ to arrive at $\Pr \Xb$. As $\I^{-1}$ is also $\undis$, showing \eqref{eq:temp1321} amounts to showing that these two different ways of \enquote*{folding} -- first the head and tail and then in a last step the junction between $k$ and $k+1$ on the one hand, and from front to back on the other hand -- do the same thing. This may be intuitively clear to the reader. The proof works by repeated application of Lemma \ref{lem:undass}, which represents one step of \enquote*{folding order doesn't matter}. Using Lemma \ref{lem:undass} the proof is completely analogous to the proof that for an operation $\star$ satisfying $ (a \star b) \star c = a \star ( b \star c )$, i.e.\ for an associative operation, one has
\begin{multline*}
  \pa<\big>{ \pa{ \dots \pa{ \pa{x_1 \star x_2} \star x_3} \star \dots } \star x_k} \star \pa<\big>{ \pa{ \dots \pa{ \pa{ x_{k+1} \star x_{k+2} } \star x_{k+3} } \star \dots } \star x_N } \\ = \pa<\big>{ \pa{ \dots \pa{ \pa{x_1 \star x_2} \star x_3} \star \dots } \star x_N} \fullstop
\end{multline*}
As we know, for such an operation any way of parenthesizing the multiplication of $N$ elements gives the same result. An analogous statement holds for $\undis$, though we do not formally state or prove this.

Finally, in Lemma \ref{lem:Hcont}, using Lemma \ref{lem:indkernel} as the main ingredient we prove the \enquote*{hard direction}, i.e.\ that $\H$ is continuous. If the continuity of $\L$ and $\K$ as informally described here seem obvious to the reader they may wish to skip ahead to Lemma \ref{lem:indkernel} and Lemma \ref{lem:Hcont}.

\begin{remark}
The reader interested in working out the details and analogies between \enquote*{folding} using $\undis$ and associative binary operations might be interested in reading about \emph{monads} in the context of Category Theory first. (See for example Chapter VI in \cite{MacLane}.) In fact, $(\Prob, \boldsymbol\eta, \boldsymbol\mu)$ forms a monad, where
\begin{align*}
  \boldsymbol\eta_\X : \X \rightarrow \Pr \X
\end{align*}
sends an element $x$ of $\X$ to the dirac measure at $x$ and 
\begin{align*}
  \boldsymbol\mu_\X & : \Pr { \Pr \X } \rightarrow \Pr \X \\
  \boldsymbol\mu_\X(\nu) & := f \mapsto \iint f(x) \d \nu'(x) \d\nu(\nu') \fullstop
\end{align*}
This monad is studied in a little more detail in \cite{Giry}.
$\undis$ can be obtained from $\boldsymbol\mu$ and a \emph{tensorial strength} $t_{\A,\B} : \A \times \Pr \B \rightarrow \Pr { \A \times \B }$ in the sense described for example in \cite{Moggi}.
\end{remark}

To show that $\L$ is continuous we will need the following lemma.
\begin{lemma}
  \label{lem:disnat}
  $\disint \A \B$ is natural in $\B$, i.e.\ for $f: \B \rightarrow \B'$ the following diagram commutes.
  \begin{align*}
    \begin{tikzpicture}[x=6cm,y=2cm]
      \node (PAB)   at (1,1) {$ \Pr { \A \times \B} $};
      \node (PAB')  at (0,1) {$ \Pr { \A \times \B'} $};
      \node (FAB)   at (1,0) {$ \FunP \A {\Pr \B} $};
      \node (FAB')  at (0,0) {$ \FunP \A {\Pr {\B'}} $};
      \draw[->] (PAB) edge node[right] {$ \disint \A \B $} (FAB);
      \draw[->] (PAB') edge node[left] {$ \disint \A {\B'} $} (FAB');
      \draw[->] (PAB) edge node[above] {$ \push{ \id \A \times f } $} (PAB');
      \draw[->] (FAB) edge node[below] {$ \push{ \id \A \times \push f } $} (FAB');
    \end{tikzpicture}
  \end{align*}
\end{lemma}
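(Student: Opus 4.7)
The plan is to reduce the claim to the defining property of disintegration together with its almost-sure uniqueness. The disintegration map is characterised by the formula $\int g(a,b)\,d\mu(a,b) = \iint g(a,b)\,d\mu_a(b)\,d\marg\mu\A(a)$, and any two kernels satisfying this formula agree $\marg\mu\A$-a.s. So I would show that the two compositions in the diagram produce measures on $\A\times\Pr{\B'}$ that integrate every bounded measurable test function to the same value.

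First, I would fix $\mu\in\Pr{\A\times\B}$ and let $a\mapsto\mu_a$ denote a classical disintegration, so that by definition $\disint\A\B(\mu)=\push{(a,\noarg)\mapsto(a,\mu_a)}(\mu)$. Set $\nu:=\push{\id_\A\times f}(\mu)\in\Pr{\A\times\B'}$ and observe that $\marg\nu\A=\marg\mu\A$. The key claim is then that $a\mapsto\push f(\mu_a)$ is a classical disintegration of $\nu$ with respect to $\A$. This is a direct computation: for any bounded measurable $g:\A\times\B'\to\R$,
\begin{align*}
\int g\,d\nu &= \int g(a,f(b))\,d\mu(a,b) = \iint g(a,f(b))\,d\mu_a(b)\,d\marg\mu\A(a) \\
&= \iint g(a,b')\,d\push f(\mu_a)(b')\,d\marg\mu\A(a).
\end{align*}
By a.s.\ uniqueness of disintegrations, $\nu_a=\push f(\mu_a)$ for $\marg\mu\A$-almost every $a$.

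Finally, I would test both compositions against an arbitrary bounded measurable $h:\A\times\Pr{\B'}\to\R$. Going down-then-left, one gets $\int h(a,\nu_a)\,d\marg\nu\A(a)$, which by the previous step equals $\int h(a,\push f(\mu_a))\,d\marg\mu\A(a)$. Going up-then-right, the definition of $\disint\A\B$ together with the pushforward under $\id_\A\times\push f$ yields precisely the same value $\int h(a,\push f(\mu_a))\,d\marg\mu\A(a)$. Since $h$ was arbitrary, the two measures coincide, which is the commutativity of the diagram.

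The only delicate point is a bookkeeping one, namely keeping track of the almost-sure identification $\nu_a=\push f(\mu_a)$: one must be careful that $h$ is evaluated against the specific version of the disintegration selected by $\disint\A{\B'}$, and it is the a.s.\ uniqueness that allows one to replace it by $\push f(\mu_a)$ under the integral against $\marg\mu\A=\marg\nu\A$. No real analytic content is required beyond the existence of measurably dependent disintegrations already invoked in the proof of Lemma~\ref{lem:functionsremainfunctions}.
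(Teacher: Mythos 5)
Your proof is correct and is exactly the ``straightforward calculation using the definitions'' the paper asserts without detailing: you identify $a\mapsto\push f(\mu_a)$ as a disintegration of $\push{\id_\A\times f}(\mu)$, invoke a.s.\ uniqueness, and verify both compositions give $\int h(a,\push f(\mu_a))\,\d\marg\mu\A(a)$ against test functions $h$. This matches the paper's (implicit) approach.
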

\begin{proof}
  This is just straigtforward calculation using the definitions.
\end{proof}

Applying Lemma \ref{lem:disnat} with $\A = \Xb^k$, $\B = \Xb_{k+1}$, $\B' = \X_{k+1}$ and $f = \proj_{\X_{k+1}} : \Xb_{k+1} \rightarrow \X_{k+1}$ we get that 
\begin{align*}
  \I'_k = \disint {\Xb^k} {\X_{k+1}} \circ \push{ \id {\Xb^k} \times \proj_{\X_{k+1}} } = \push{ \id {\Xb^k} \times \push{ \proj_{\X_{k+1}}} } \circ \disint {\Xb^k} {\Xb_{k+1}}
\end{align*}
Setting $\L_k := \push{ \id {\Xb^k} \times \push{ \proj_{\X_{k+1}}} }$ we get $ \I'_k = \L_k \circ \I_k$ and then setting $\L((\mu_k)_k) := (\L_k(\mu_k))_k$ gives $\I' = \L \circ \I$.

There is an analogue of Lemma \ref{lem:disnat} which we list here for completeness.
\begin{lemma}
  \label{lem:FFFandP}
  $\und \A \B : \Pr { \A \times \Pr \B } \rightarrow \Pr { \A \times \B }$ is natural in $\B$, i.e.\ for $f: \B \rightarrow \B'$ the following diagram commutes:
  \begin{align*}
    \begin{tikzpicture}[x=6cm,y=2cm]
    \node (PPB)  at (1,1) {$\Pr {\A \times \Pr \B}$};
    \node (PPBB) at (0,1) {$\Pr {\A \times \Pr {\B'}}$};
    \node (PB)   at (1,0) {$\Pr {\A \times \B}$};
    \node (PBB)  at (0,0) {$\Pr {\A \times \B'}$};
    \draw[->] (PPB) edge node[above] {$\Pr {\id \A \times \push f}$} (PPBB);
    \draw[->] (PB) edge node[below] {$\Pr {\id \A \times f}$} (PBB);
    \draw[->] (PPB) edge node[right] {$ \und \A \B $} (PB);
    \draw[->] (PPBB) edge node[left] {$ \und \A {\B'} $} (PBB);
    \end{tikzpicture}
  \end{align*}
  In particular, if $\B \subseteq \B'$ then $$\restr{\und \A {\B'}}{\Pr {\A \times \Pr \B}} = \und \A \B$$ if we regard $\Pr {\A \times \Pr \B}$ as a subset of $\Pr {\A \times \Pr {\B'}}$ by recursively using the recipe: \enquote*{if $\B$ is a subset of $\B'$, then we can view $\Pr \B$ as the subset of those $\mu \in \Pr {\B'}$ which are concentrated on $\B$}.
\end{lemma}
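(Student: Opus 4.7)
My plan is to reduce the commutativity of the square to a Fubini-type calculation against bounded measurable (or continuous bounded) test functions $g: \A \times \B' \to \R$. Unfolding the definition $\und \A \B(\mu): h \mapsto \iint h(a,b) \d\nu(b) \d\mu(a,\nu)$ together with the standard change-of-variables formula for pushforwards, the composition $\push{\id{\A} \times f} \circ \und \A \B$ integrates $g$ against $\mu$ as $\iint g(a,f(b)) \d\nu(b) \d\mu(a,\nu)$. The other route $\und \A {\B'} \circ \push{\id{\A} \times \push f}$ first replaces the inner measure $\nu$ by $\push f (\nu)$ and then evaluates $\iint g(a,b') \d(\push f \nu)(b') \d\mu(a,\nu)$; one further change of variables in the inner integral produces the same expression. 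Since agreement on such $g$ determines measures on $\A \times \B'$, the two sides are equal.

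For the ``in particular'' statement, I would apply the just-proved naturality with $f$ equal to the inclusion $\iota : \B \hookrightarrow \B'$. Under the recursive identification described in the lemma, $\push \iota : \Pr \B \to \Pr {\B'}$ is exactly the embedding that realises $\Pr \B$ as the measures on $\B'$ concentrated on $\B$, and consequently $\push{\id{\A} \times \push \iota}$ and $\push{\id{\A} \times \iota}$ are the corresponding inclusions of the upper and lower rows of the square. The commutative diagram then reads literally as $\restr{\und \A {\B'}}{\Pr {\A \times \Pr \B}} = \und \A \B$ (each side viewed in $\Pr {\A \times \B'}$ via the evident embedding).

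I do not expect any real obstacle: the content is the entirely routine observation that pushforward under $f$ in the inner slot commutes with the integration that defines $\undis$. The only subtlety is notational rather than mathematical: making sure the recursive identification $\Pr \B \subseteq \Pr {\B'}$ is spelled out as the embedding through $\push \iota$ so that the ``restriction'' formulation is literally the specialisation of the naturality square, rather than a separate claim.
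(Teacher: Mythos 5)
Your proof is correct and follows exactly the approach the paper intends; the paper's own proof is simply the remark ``Again this is just calculation,'' and your write-up supplies that calculation (pushforward commutes with the inner integration defining $\undis$, checked against test functions) together with the observation that the ``in particular'' clause is naturality specialised to the inclusion $\iota : \B \hookrightarrow \B'$, under which $\push\iota$ realises the recursive identification.
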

\begin{proof}
  Again this is just calculation.
\end{proof}
We already implicity used the \enquote*{in particular}-part of Lemma \ref{lem:FFFandP} when we said that $\Nmap$ can be regarded both as a map into $\Pr{\XP 1}$ and into $\FFF$ but the use there seemed too trivial to warrant much mention. There will be more such tacit uses.
\let\XF\XP

Now we show that $\K$ is continuous. We claim that it can be written as
\begin{align*}
  \K(\mu) = \pa{ \K_k(\mu) }_k
\end{align*}
where
\begin{multline*}
  \K_k = \push<\Big>{ \id{\Xb^k} \times \pa<\big>{ \und {\Xb_{k+1}^{N-1}} {\XF{N} }  \circ \dots \circ \und {\Xb_{k+1}^{k+2}} {\XF{k+3}} \circ \und {\X_{k+1}} {\XF{k+2}} } }   \circ \\    \und {\Xb^{k-1}} {\XF{k}} \circ \dots \circ \und {\Xb^2} {\XF{3}} \circ \und {\Xb^1} {\XF{2}} \text{ ,}
\end{multline*}
or without the dots, letting $\funprod$ denote concatenation of functions, e.g.\ $\funprod_{i=3}^1 f_i = f_3 \circ f_2 \circ f_1$:
\begin{align*}
  \K_k = \push<\Big>{ \id{\Xb^k} \times \pa<\big>{ {\textstyle \funprod_{i=N-1}^{k+1}} \und { \Xb_{k+1}^i } {\XF{i+1} } } }   \circ {\textstyle \funprod_{i=k-1}^1}  \und {\Xb^i} {\XF{i+1}} \fullstop
\end{align*}

To prove this we will repeatedly apply the following lemma.

\begin{lemma}[$\undis$ is \enquote*{associative}]
  \label{lem:undass}
  $\undis$ satisfies the following relation:
  $$ \und {\A \times \B} \C \circ \und \A {\B \times \Pr \C} = \und \A {\B \times \C} \circ \push { \id \A \times \und \B \C} $$
  These maps can be seen in the following commutative diagram.
  \begin{align*}
    \begin{tikzpicture}[x=6cm,y=2cm]
      \node (PAPBPC) at (1,1) {$\Pr{\A \times \Pr{\B \times \Pr \C}}$};
      \node (PABPC)  at (0,1) {$\Pr{\A \times \B \times \Pr \C}$};
      \node (PAPBC)  at (1,0) {$\Pr{\A \times \Pr{\B \times \C}}$};
      \node (PABC)   at (0,0) {$\Pr{\A \times \B \times \C}$};
      \draw[->] (PAPBPC) edge node[above] {$\und \A {\B \times \Pr \C}$} (PABPC);
      \draw[->] (PABPC)  edge node[left]  {$\und {\A \times \B} \C$} (PABC);
      \draw[->] (PAPBPC) edge node[right] {$\push{ \id \A \times \und \B \C}$} (PAPBC);
      \draw[->] (PAPBC)  edge node[below] {$\und \A {\B \times \C}$} (PABC);
    \end{tikzpicture}
  \end{align*}
\end{lemma}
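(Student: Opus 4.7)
The plan is to verify the identity by testing both sides against an arbitrary bounded measurable function $g: \A \times \B \times \C \to \R$ and showing that the resulting iterated integrals coincide; since two Borel probability measures on a Polish space agree as soon as they integrate every bounded measurable (equivalently, every bounded continuous) function identically, this suffices.

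First I would unroll the left-hand side. Fix $\mu \in \Pr{\A \times \Pr{\B \times \Pr \C}}$. By the very definition of $\undis$, applying $\und \A {\B \times \Pr \C}$ yields a measure on $\A \times \B \times \Pr \C$ that pairs with a test function $h$ to give $\int \int h(a,b,\gamma) \d\nu(b,\gamma) \d\mu(a,\nu)$. Applying $\und {\A \times \B} \C$ to this then integrates out the $\Pr \C$-coordinate, so that pairing with $g$ produces the triple integral
\[
\int \int \int g(a,b,c) \d\gamma(c) \d\nu(b,\gamma) \d\mu(a,\nu).
\]

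Next I would unroll the right-hand side. The pushforward $\push{\id \A \times \und \B \C}$ replaces each fibre $\nu \in \Pr{\B \times \Pr \C}$ by $\und \B \C(\nu) \in \Pr{\B \times \C}$, producing a measure on $\A \times \Pr{\B \times \C}$. Applying $\und \A {\B \times \C}$ to this and pairing with $g$ gives $\int \int g(a,b,c) \d(\und \B \C(\nu))(b,c) \d\mu(a,\nu)$, and expanding the inner $\und \B \C$ according to its definition produces the same triple integral as above. Hence both sides agree on all bounded measurable test functions, and the equality of maps follows.

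The only subtlety, and what I would flag as the closest thing to an obstacle, is to make sure that the inner expressions are jointly measurable, so that each application of $\undis$ and the final identification of the two iterated integrals are legitimate instances of Fubini's theorem. This follows from the standard fact that for every bounded measurable $h$ on a Polish product the map $\nu \mapsto \int h \d\nu$ is Borel measurable, which is precisely the well-definedness statement built into the definition of $\undis$ itself (and already needed to make sense of each map appearing in the diagram). Beyond this bookkeeping point the lemma is a clean manifestation of Fubini's theorem together with the definition of $\undis$ as \emph{integrating out the outer measure}, and the reader familiar with the Giry monad will recognize it as the standard associativity axiom for its multiplication.
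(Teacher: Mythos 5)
Your proof is correct and is essentially the paper's own argument: the paper's proof simply states that both composites send $\alpha \in \Pr{\A \times \Pr{\B \times \Pr \C}}$ to the measure given by $f \mapsto \tsint f(a,b,c)\d\gamma(c)\d\beta(b,\gamma)\d\alpha(a,\beta)$, which is exactly the triple integral you obtain on both sides. (One small quibble: no genuine interchange of integration order occurs, so invoking Fubini is unnecessary --- everything here is just the definition of $\undis$ together with the measurability of $\nu\mapsto\tsint h\d\nu$ that you correctly flag.)
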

\begin{proof}
  This is just expanding the definition. Both maps send a measure $\alpha \in \Pr { \A \times \Pr { \B \times \Pr \C } }$ to the measure $\mu$ with
  \begin{align*}
    \tsint f \d\mu = \tsint f(a,b,c) \d\gamma(c) \d\beta(b, \gamma) \d\alpha(a,\beta) \fullstop
  \end{align*}
\end{proof}

\newcommand{\Temp}{\mathcal T}
\begin{lemma}
  \label{lem:easydirection}
  The following relation holds.
  \begin{align}
    \label{eq:easydirection}
    \und {\Xb^k} {\Xb_{k+1}} \circ \K_k = {\textstyle \funprod_{i=N-1}^1} \und {\Xb^i} {\XF{i+1}}
  \end{align}
\end{lemma}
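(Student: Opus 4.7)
The plan is a direct computational verification. Both sides of \eqref{eq:easydirection} are maps from $\FFF$ into $\Pr{\Xb}$, so it suffices to show that they send an arbitrary $\alpha$ to the same measure, which I will check by integrating against a generic bounded measurable $f\colon\Xb\to\R$. The only tool needed is the defining identity $\int g\,d\und{\A}{\B}(\mu) = \iint g(a,b)\,d\nu(b)\,d\mu(a,\nu)$ for $\mu \in \Pr{\A \times \Pr{\B}}$, applied iteratively.

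First I would unwrap the right-hand side: by an easy induction on the number of factors in the composition, iterating the above identity yields
\begin{align*}
  \int f\, d\Bigl[{\textstyle\funprod_{i=N-1}^1}\und{\Xb^i}{\XF{i+1}}\Bigr](\alpha) = \int f(x_1,\dots,x_N)\, d\gamma_{N-1}(x_N)\cdots d\gamma_1(x_2,\gamma_2)\,d\alpha(x_1,\gamma_1),
\end{align*}
where each inner $\gamma_i \in \Pr{\XF{i+1}}$ is drawn against the preceding integrating measure.

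Next I would unwrap the left-hand side in its three pieces. The head $\funprod_{i=k-1}^1 \und{\Xb^i}{\XF{i+1}}$ performs the same unfolding but truncated at index $k-1$, producing a measure on $\Xb^k \times \Pr{\XF{k+1}}$. The middle pushforward replaces each inner $\gamma_k \in \Pr{\XF{k+1}}$ by $\tilde\gamma_k := \bigl[\funprod_{i=N-1}^{k+1}\und{\Xb_{k+1}^i}{\XF{i+1}}\bigr](\gamma_k) \in \Pr{\Xb_{k+1}}$, which by the same iterated unfolding satisfies $\int h\,d\tilde\gamma_k = \int h(x_{k+1},\dots,x_N)\,d\gamma_{N-1}(x_N)\cdots d\gamma_k(x_{k+1},\gamma_{k+1})$ for $h\colon\Xb_{k+1}\to\R$. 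The final $\und{\Xb^k}{\Xb_{k+1}}$ integrates $f$ against $\tilde\gamma_k$ in the inner slot. Chaining the three computations reproduces exactly the nested integral expression obtained above for the right-hand side, so both sides agree.

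The main obstacle is purely notational bookkeeping: tracking which level of the nested product-and-$\Pr$-structure is being manipulated at each step, and making sure that the various product identifications (such as $\Xb^k \times \X_{k+1} = \Xb^{k+1}$ or $\X_{k+1} \times \Xb_{k+2} = \Xb_{k+1}$) line up correctly. Morally the lemma is just an associative / Fubini-style interchange of iterated disintegrations, and one could alternatively proceed by induction on $N-k$ using Lemma \ref{lem:undass} as the single inductive step; but the direct unwrapping sketched above is equally short and, to my taste, more transparent.
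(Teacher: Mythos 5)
Your computation is correct and the lemma does follow from it. However, the route you take is genuinely different from the paper's: where you unroll both sides of \eqref{eq:easydirection} into a single big nested integral and observe that the two expressions coincide, the paper instead sets up a finite chain of intermediate operators $\Temp_l$ interpolating between the two sides and shows $\Temp_l = \Temp_{l-1}$ by invoking Lemma~\ref{lem:undass} exactly once per step. Your ``single-shot Fubini'' argument is shorter to write down and arguably more transparent, but its correctness rests on an implicit claim that the iterated unfolding of $\undis$ behaves as a well-defined nested integral, a claim which is itself proved by essentially the same induction you are bypassing — in other words, your ``easy induction on the number of factors'' in the first display is doing the same work as the paper's $\Temp_l$-chain, just informally. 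The paper's presentation buys a clean separation of concerns: the sole analytic content (one application of ``folding order doesn't matter'') is isolated in Lemma~\ref{lem:undass}, and the rest is bookkeeping, which is exactly the style the authors announce they want to enforce throughout the section. Both proofs are valid; if you wanted your version to be airtight you would need to state and prove the unrolling formula $\int f\,d\bigl[\funprod_{i=N-1}^1 \und{\Xb^i}{\XF{i+1}}\bigr](\alpha) = \int f(x_1,\dots,x_N)\,d\gamma_{N-1}\cdots d\gamma_1\,d\alpha$ as a lemma in its own right, at which point you have effectively reproduced the paper's structure with different labels.
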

\begin{proof}
  Again, this is just repeated application of Lemma \ref{lem:undass}.
  Below we define $\Temp_l$ for $N \geq l \geq k$ and show that
  \begin{align}
    \label{eq:tempass}
    \und {\Xb^k} {\Xb_{k+1}} \circ \funprod_{i=N-1}^{k+1} \push{ \id{\Xb^k} \times \und {\Xb_{k+1}^i} {\XP{i+1} }} = \Temp_l
  \end{align}
  for all $N \geq l \geq k$ by showing $\Temp_l = \Temp_{l-1}$ for all $N \geq l > k$.
  The left hand side of \eqref{eq:tempass} is the left hand side of \eqref{eq:easydirection} with the common tail ${\textstyle \funprod_{i=k-1}^1}  \und {\Xb^i} {\XF{i+1}}$ of the left and right side in \eqref{eq:easydirection} dropped. $\Temp_k$ will be the right hand side of \eqref{eq:easydirection} with the common part dropped.
  \begin{align*}
    \Temp_l := \funprod_{i=N-1}^l \und {\Xb^i} {\XP{i+1}}  \quad  \circ \,
               \und {\Xb^k} {\Xb_{k+1}^l \times \PPP[l+1]}  \, \circ \quad
               \funprod_{i=l-1}^{k+1} \push{ \id{\Xb^k} \times \und {\Xb_{k+1}^i} {\XP{i+1}} }
  \end{align*}
  Here we regard $\funprod_r^s \dots$ with $r < s$ (an empty product in our context) as the identity function.
  For $l = N$ the first factor is an empty product and therefore clearly \eqref{eq:tempass} is true for $l = N$.
  To get from $\Temp_l$ to $\Temp_{l-1}$ we leave the first factor alone and apply Lemma \ref{lem:undass} with $\A = \Xb^k $, $\B = \Xb_{k+1}^{l-1}$ and $\C = \XP{l}$.
  This transforms 
  $$ \und {\Xb^k} {\Xb_{k+1}^l \times \PPP[l+1]} \circ \push{ \id{\Xb^k} \times \und {\Xb_{k+1}^{l-1}} {\XP{l}} } $$
  into
  $$ \und {\Xb^{l-1}} {\XP{l}} \circ   \und {\Xb^k} {\Xb_{k+1}^{l-1} \times \PPP[l]} $$
  and therefore $\Temp_l$ into $\Temp_{l-1}$.
\end{proof}

\begin{lemma}
  \label{lem:easyconclusion}
  The right hand triangle in \eqref{diag:contmaps} commutes, i.e.\ 
  \begin{align*}
    \K_k \circ \Nmap = \I_k \fullstop
  \end{align*}
\end{lemma}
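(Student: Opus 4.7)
The strategy is to show that both $\K_k \circ \Nmap$ and $\I_k$ are right inverses of $\und{\Xb^k}{\Xb_{k+1}}$ that take values in $\FunP{\Xb^k}{\Pr{\Xb_{k+1}}}$; since $\und{\Xb^k}{\Xb_{k+1}}$ is injective on that set, they must coincide.

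The first task is to compute $\und{\Xb^k}{\Xb_{k+1}} \circ \K_k \circ \Nmap$. By Lemma \ref{lem:easydirection}, the inner composition collapses to $\funprod_{i=N-1}^1 \und{\Xb^i}{\XP{i+1}} \circ \Nmap$, so I need to show this equals $\id{\Pr\Xb}$. The plan is to peel off the recursive definition of $\Nmap$ one layer at a time: using $\Nmap^t = \disint{\Xb^t}{\XP{t+1}} \circ \Nmap^{t+1}$ together with the identity $\und{\Xb^t}{\XP{t+1}} \circ \disint{\Xb^t}{\XP{t+1}} = \id{\Pr{\Xb^t \times \XP{t+1}}}$, successive application of the $\und$'s (in order $i=1, 2, \dots, N-1$) turns $\Nmap^t$ into $\Nmap^{t+1}$, so that after $N-1$ steps one lands on $\Nmap^N = \id{\Pr\Xb}$. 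On the other side, $\und{\Xb^k}{\Xb_{k+1}} \circ \I_k = \id{\Pr\Xb}$ is immediate from the defining property of the disintegration.

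It remains to verify that $\K_k \circ \Nmap$ actually takes values in $\FunP{\Xb^k}{\Pr{\Xb_{k+1}}}$. Since $\Nmap$ lands in the nested tower $\FFF = \FunP{\X_1}{\FunP{\X_2}{\cdots}}$, I would show inductively that $\und{\Xb^i}{\XP{i+1}}$ sends $\FunP{\Xb^i}{\FFF[i+1]}$ into $\FunP{\Xb^{i+1}}{\FFF[i+2]}$ (a direct calculation, really Lemma \ref{lem:functionsremainfunctions} in slightly rearranged form, exploiting that $\FFF[i+1] = \FunP{\X_{i+1}}{\FFF[i+2]}$). Hence after the head-folding in $\K_k$ the image sits in $\FunP{\Xb^k}{\FFF[k+1]}$, and the outer pushforward $\push{\id{\Xb^k} \times (\cdots)}$ in $\K_k$, being the identity on the $\Xb^k$-coordinate, preserves function-concentratedness on $\Xb^k$, leaving us in $\FunP{\Xb^k}{\Pr{\Xb_{k+1}}}$. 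Injectivity of $\und{\Xb^k}{\Xb_{k+1}}$ on this set then yields $\K_k \circ \Nmap = \I_k$.

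The whole argument is essentially a diagram chase; the only real difficulty, as with the preceding lemmas in this section, is the bookkeeping -- keeping straight the various identifications between spaces like $\Pr{\Xb^i \times \XP{i+1}}$ and $\Pr{\Xb^{i+1} \times \Pr{\XP{i+2}}}$, and between $\FFF[t]$ and its ambient $\PPP[t]$. In spirit it is completely parallel to the proof of Lemma \ref{lem:easydirection}: a repeated use of the fact that $\und$ is a left inverse of $\disint$ on the appropriate function-concentrated subspace.
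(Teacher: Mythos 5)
Your proof is correct and follows essentially the same route as the paper's: apply Lemma \ref{lem:easydirection}, note that the full chain of $\und$'s undoes $\Nmap$ to give the identity, observe that $\K_k\circ\Nmap$ lands in $\FunP{\Xb^k}{\Pr{\Xb_{k+1}}}$, and invoke that $\und$ and $\dis$ are inverse bijections there. You spell out two points the paper leaves implicit -- that the head-folding recovers $\Nmap^k$ step by step, and that the image lies in the function-concentrated subspace on which $\und$ is injective -- which is a welcome bit of extra bookkeeping rather than a deviation.
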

\begin{proof}
  Prepending $\Nmap$ to \eqref{eq:easydirection} gives
  $$ \restr{\und {\Xb^k} {\Xb_{k+1}}}{\FunP {\Xb^k} {\Pr {\Xb_{k+1}}}} \circ \K_k \circ \Nmap = \id { \Pr \Xb } $$
  and appending $\I_k$ gives
  $$ \K_k \circ \Nmap = \I_k \fullstop $$
\end{proof}

Now we will show that $\H$ is continuous.
We will postpone the proof of Lemma~\ref{lem:indkernel} below, which is the crucial non-bookkeeping ingredient in the proof of Lemma~\ref{lem:Hcont} below, until the end of this section. The methods used in the proof of Lemma~\ref{lem:indkernel} differ significantly from the rest in this section and make use of the concept of the \emph{modulus of continuity} for measures, and results relating to it, introduced in the companion paper \cite{ModulusOfContinuity} to this one.

\begin{restatable}{lemma}{indkernel}
  \label{lem:indkernel}
  Let $$\dom {\J \A \B \Y} \subseteq \FunP<\big> \A {\Prob(\B)} \times \FunP<\big> {\A \times \B} \Y$$ be the set of all $(\mu', \mu)$ s.t.\  
  \begin{align}
    \label{eq:mu'mu}
    \und \A \B (\mu') = \marg {\mu} {\A \times \B} \fullstop
  \end{align}
  The function
  \begin{align*}
    \J \A \B \Y & : \dom {\J \A \B \Y} \rightarrow \FunP<\big> \A { \FunP \B \Y } \\
    \J \A \B \Y & (\mu',\mu) := \disint \A {\B \times \Y} (\mu)
  \end{align*}
  is continuous.
\end{restatable}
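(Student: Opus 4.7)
The plan is to argue sequentially: suppose $(\mu'_n, \mu_n) \to (\mu', \mu)$ in $\dom{\J \A \B \Y}$, and set $\rho_n := \disint \A {\B \times \Y}(\mu_n)$, $\rho := \disint \A {\B \times \Y}(\mu)$. By Lemma \ref{lem:functionsremainfunctions} both $\rho_n$ and $\rho$ lie in $\FunP \A {\FunP \B \Y} \subseteq \Prob(\A \times \Prob(\B \times \Y))$. I will show $\rho_n \to \rho$ by (i) establishing relative compactness of $(\rho_n)_n$, and (ii) identifying every accumulation point as $\rho$. The key subtlety is that $\dis$ is not a continuous operation, so the convergence $\mu_n \to \mu$ by itself does not suffice; the extra datum $\mu'_n \to \mu'$ is exactly what pins down the limit.

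For (i), the $\A$-marginal of $\rho_n$ is $\marg{\mu_n}\A$ (tight by hypothesis), and the barycenter of the $\Prob(\B \times \Y)$-marginal of $\rho_n$ equals $\marg{\mu_n}{\B \times \Y}$ (also tight). A standard Markov-inequality argument, of the type used in the proof of Lemma \ref{lem:CompactnessLemma} and tracing back to \cite{Ho92}, lifts tightness of the barycenter to tightness of $\marg{\rho_n}{\Prob(\B \times \Y)}$ in $\Prob(\Prob(\B \times \Y))$, which gives weak relative compactness of $(\rho_n)_n$. Under Convention 2 this has to be strengthened to $\Wp$-relative compactness: one must additionally control the uniform integrability of $K \mapsto \Wp(\delta, K)^p$ under $\marg{\rho_n}{\Prob_p(\B \times \Y)}$, which after unfolding amounts to uniform integrability of the $p$-th moments of $\mu_n$ in the right sense. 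This is where the modulus-of-continuity technology from the companion paper \cite{ModulusOfContinuity} comes in, providing the needed quantitative control at the level of the individual disintegrations.

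For (ii), suppose $\rho_{n_k} \to \tilde\rho$ along a subsequence. Continuity of $\und \A {\B \times \Y}$ yields $\und \A {\B \times \Y}(\tilde\rho) = \lim_k \mu_{n_k} = \mu$, and continuity of the pushforward under the map $(a, K) \mapsto (a, \marg K \B)$ (itself continuous because $\marg{K}{\B} = \push{\proj_\B}(K)$ is continuous in $K$) yields $\push{(a, K) \mapsto (a, \marg K \B)}(\tilde\rho) = \mu'$. Writing $\mu$ as concentrated on the graph of a measurable $g : \A \times \B \to \Y$ and $\mu'$ as concentrated on the graph of a measurable $h : \A \to \Prob(\B)$, the first identity, applied to the Borel set $\mathrm{graph}(g)$, forces $K$ to be concentrated on $\mathrm{graph}(g(a, \cdot))$ for $\tilde\rho$-a.e.\ $(a, K)$, while the second forces $\marg K \B = h(a)$. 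Together these two constraints uniquely determine $K = \push{b \mapsto (b, g(a, b))}(h(a))$, a function of $a$ alone, so $\tilde\rho \in \FunP \A {\Prob(\B \times \Y)}$. Since $\disint \A {\B \times \Y}$ and $\und \A {\B \times \Y}$ are mutually inverse bijections between $\FunP \A {\Prob(\B \times \Y)}$ and $\Prob(\A \times \B \times \Y)$, this forces $\tilde\rho = \disint \A {\B \times \Y}(\mu) = \rho$.

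The main obstacle is step (i) in the $\Wp$ setting: weak relative compactness of $(\rho_n)_n$ is a routine tightness exercise, but upgrading it to $\Wp$-relative compactness in $\Prob_p(\A \times \Prob_p(\B \times \Y))$ requires the disintegration-level quantitative estimates from \cite{ModulusOfContinuity}. Step (ii), by contrast, is a clean measure-theoretic identification argument relying only on continuity of $\und \A {\B \times \Y}$ and of the $\B$-marginal map together with the graph structure encoded in $\FunP$.
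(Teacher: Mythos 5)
Your proof is correct, but it takes a genuinely different route from the paper's.

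The paper's proof of Lemma~\ref{lem:indkernel} is a direct, quantitative $\epsilon$--$\delta$ argument. Given $(\mu',\mu)$ and a nearby $(\nu',\nu)$, it explicitly manufactures a coupling $\bar\gamma$ between $\J \A \B \Y(\mu',\mu)$ and $\J \A \B \Y(\nu',\nu)$ out of a near-optimal coupling $\gamma'$ for $(\mu',\nu')$, the graph functions $f_a, g_a$, and a measurable family of almost-optimal transport plans $\hat\gamma_{\hat b_1,\hat b_2}$. The cost of $\bar\gamma$ is split into an $\A\times\B$-part controlled directly by $\gamma'$ and a $\Y$-part, and the latter is controlled by invoking Lemma~\ref{lem:helper} (= \cite[Lemma 4.2]{ModulusOfContinuity}), the modulus-of-continuity estimate for measures concentrated on a graph. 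The output is an explicit bound $\D(\bar\mu,\bar\nu)^p < 2\epsilon^p$.

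You instead argue softly, by sequential compactness plus identification of the limit. You establish relative compactness of $(\rho_n)_n$ by a tightness/barycenter argument (and its $\Wp$ refinement), which is exactly the content of Lemma~\ref{lem:CompactnessLemma} / \cite[Lemma 3.3]{ModulusOfContinuity}, and then observe that any accumulation point $\tilde\rho$ of $(\rho_n)_n$ is pinned down by two continuous pieces of information, namely $\und \A {\B\times\Y}(\tilde\rho)=\mu$ and $\push{(a,K)\mapsto(a,\marg K\B)}(\tilde\rho)=\mu'$. The combination of the graph structure of $\mu$ and the graph structure of $\mu'$ forces $K=\push{b\mapsto(b,g(a,b))}(h(a))$ $\tilde\rho$-a.s., which shows $\tilde\rho\in\FunP \A {\FunP \B \Y}$, and then the bijection between $\FunP \A {\Prob(\B\times\Y)}$ and $\Prob(\A\times\B\times\Y)$ identifies $\tilde\rho=\rho$. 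I checked the identification step carefully: the pushforward of $\rho_n$ under $(a,K)\mapsto(a,\marg K\B)$ equals $\mu'_n$ by Lemma~\ref{lem:disnat} together with the defining constraint \eqref{eq:mu'mu} and the identity $\disint \A \B\circ\restr{\und \A \B}{\FunP \A {\Pr\B}}=\id{}$, so your two limit equations are legitimate.

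So both proofs lean on the companion paper, but on different results: the paper's argument uses the pointwise modulus-of-continuity estimate of \cite[Lemma 4.2]{ModulusOfContinuity} and gives a quantitative bound suitable for explicit error control; yours uses the relative-compactness transfer of \cite[Lemma 3.3]{ModulusOfContinuity} and gives a cleaner but purely qualitative conclusion. One small comment: in step (i) for Convention~2 you attribute the $\Wp$-upgrade to ``modulus-of-continuity technology'' somewhat vaguely; the actual content needed there is exactly the $\Wp$-version of the Compactness Lemma, and the uniform integrability is a one-line consequence of the standard Markov-set argument once you notice that $\D_{\B\times\Y}^p\leq\D_{\A\times\B\times\Y}^p$. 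It would tighten the exposition to cite Lemma~\ref{lem:CompactnessLemma} there directly rather than gesturing at the companion paper in general.
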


Clearly, as a function between sets, $\J \A \B \Y (\mu',\mu)$ only depends on $\mu$. But, as we know, $\disint \A {\B \times \Y}$ is \emph{not} continuous. Only when we refine the topology on the source space, which we encode by regarding $\J \A \B \Y$ as a map from the above subset of a product space, does it become continuous.

\begin{lemma}
  \label{lem:Hcont}
  $\H$ is continuous.
\end{lemma}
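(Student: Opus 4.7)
The plan is to construct $\H$ by a backward recursion in time, using Lemma \ref{lem:indkernel} as the only non-trivial continuous step at each level. Informally, given the family $\I'(\mu) = (\I'_t(\mu))_{t=1}^{N-1}$ of one-step conditional laws, one rebuilds the nested object $\Nmap(\mu) \in \FFF$ by starting at the last time-step and successively gluing on earlier conditional laws.

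Concretely, write $(\mu_t)_{t=1}^{N-1} = \I'(\mu)$, with $\mu_t \in \FunP {\Xb^t} {\Pr{\X_{t+1}}}$. Since $\XP N = \X_N$ and $\FFF[N] = \Pr{\X_N}$, the element $G_{N-1} := \mu_{N-1}$ already equals $\Nmap^{N-1}(\mu) \in \FunP {\Xb^{N-1}} {\FFF[N]}$. For $t$ from $N-2$ down to $1$, I would set
\begin{align*}
  G_t := \J {\Xb^t} {\X_{t+1}} {\FFF[t+2]} (\mu_t, G_{t+1}).
\end{align*}
By the codomain of $\J$, this lands in $\FunP {\Xb^t} {\FunP {\X_{t+1}} {\FFF[t+2]}} = \FunP {\Xb^t} {\FFF[t+1]}$, which is the natural home of $\Nmap^t(\mu)$. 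At $t=1$ this yields a candidate $G_1 \in \FFF[1] = \FFF$, and I would declare $\H((\mu_t)_t) := G_1$.

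To apply Lemma \ref{lem:indkernel} at each step one must verify the compatibility relation \eqref{eq:mu'mu}, namely $\und {\Xb^t} {\X_{t+1}}(\mu_t) = \marg {G_{t+1}} {\Xb^{t+1}}$. The left side equals $\marg \mu {\Xb^{t+1}}$ since $\und \circ \disint$ is the identity on $\Pr{\Xb^{t+1}}$ and $\I'_t = \disint {\Xb^t} {\X_{t+1}} \circ \push{\proj_{\Xb^{t+1}}}$. For the right side, I would proceed by backward induction showing simultaneously $G_{t+1} = \Nmap^{t+1}(\mu)$: the base case $t+1 = N-1$ is by construction, and the inductive step uses the identity $G_t = \disint {\Xb^t} {\X_{t+1} \times \FFF[t+2]}(G_{t+1}) = \disint {\Xb^t} {\XP{t+1}}(\Nmap^{t+1}(\mu)) = \Nmap^t(\mu)$ coming from the very application of Lemma \ref{lem:indkernel} whose hypothesis we are verifying. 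From $G_{t+1} = \Nmap^{t+1}(\mu)$ one immediately reads off $\marg {G_{t+1}} {\Xb^{t+1}} = \marg \mu {\Xb^{t+1}}$, closing the induction. In particular $G_1 = \Nmap(\mu)$, so $\H \circ \I' = \Nmap$ and $\H$ is forced to coincide with the inverse of $\I'$ followed by $\Nmap$, as required.

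Continuity of $\H$ is then immediate: its ingredients are the coordinate projections $(\mu_t)_t \mapsto \mu_t$, pairings of continuous maps, and the maps $\J {\Xb^t} {\X_{t+1}} {\FFF[t+2]}$, each of which is continuous by Lemma \ref{lem:indkernel}; $\H$ is their finite composition. The only substantive analytic input is Lemma \ref{lem:indkernel} itself, whose proof is deferred. The main obstacle in the present lemma is not analytical but notational: one must carefully propagate the marginal-compatibility condition through the recursion so that each invocation of $\J$ is legitimate, which is precisely what the inductive bookkeeping above provides.
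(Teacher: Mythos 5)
Your proposal is correct and matches the paper's own proof essentially line for line: both proceed by a backward recursion from $t=N-1$ down to $t=1$, gluing in $\mu_t = \I'_t(\mu)$ at each level via $\J {\Xb^t} {\X_{t+1}} {\FFF[t+2]}$, and both verify the compatibility condition \eqref{eq:mu'mu} by a simultaneous induction showing $G_{t+1} = \Nmap^{t+1}(\mu)$. The only place where the paper is a touch more careful than you is the step \enquote{one immediately reads off $\marg{G_{t+1}}{\Xb^{t+1}} = \marg{\mu}{\Xb^{t+1}}$}, which in the paper is spelled out as a short chain of identities using $\push{\proj_\A}\circ\disint\A\B = \push{\proj_\A}$ iterated up from $\Nmap^{t+1}$ to $\Nmap^N = \id$, but this is a cosmetic difference and not a gap.
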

\begin{proof}
  We will inductively define
  \begin{align*}
    \H^k : \I'\br[]{\Pr\Xb} \rightarrow \Pr{ \Xb^k \times \PPP[k+1]}
  \end{align*}
  (again down from $N-1$ to $1$) so that they will be continuous by construction (and by virtue of Lemma \ref{lem:indkernel}). Also by construction, we will have $\H^k \circ \I' = \Nmap^k$.
  $\H$ will be $\H^1$ so that $\H \circ \I' = \Nmap$.

  Set $\H^{N-1} := \proj_{N-1}$, the projection from $\prod_{k=1}^{N-1} \FunP {\Xb^k} {\Pr{ \X_{k+1} }}$ onto the last factor. $\H^{N-1} \circ \I' = \I'_{N-1} = \disint {\Xb^{N-1}} {\X_N} = \Nmap^{N-1}$ by definition.
  Given $\H^{k+1}$ define
  \begin{align*}
    \H^k(\mu) := \J {\Xb^k} {\X_{k+1}} {\FFF[k+2]} \pa { \proj_k(\mu), \H^{k+1}(\mu) } \text{ ,}
  \end{align*}
  where $\proj_k$ is the projection from $\prod_{k=1}^{N-1} \FunP {\Xb^k} {\Pr{ \X_{k+1} }}$ onto the $k$-th factor.

  For this to be well-defined we need to check that for $\mu \in \I'\br[]{\Pr\Xb}$ we have $$\und {\Xb^k} {\X_{k+1}} (\proj_k(\mu)) = \push{\proj_{\Xb^{k+1}}}\pa{\H^{k+1}(\mu)} \fullstop$$
  I.e.\ for $\nu \in \Pr{\Xb}$ we want
  \begin{align*}
    \und {\Xb^k} {\X_{k+1}} (\proj_k(\I'(\nu))) = \push{\proj_{\Xb^{k+1}}}\pa{\H^{k+1}(\I'(\nu))}
  \end{align*}
  The composite of the maps on the left-hand side is equal to
  \begin{align*}
    \und {\Xb^k} {\X_{k+1}} \circ \I'_k = \und {\Xb^k} {\X_{k+1}} \circ \disint {\Xb^k} {\X_{k+1}} \circ \push{\proj_{\Xb^{k+1}}} = \push{\proj_{\Xb^{k+1}}} \fullstop
  \end{align*}
  On the right-hand side we get by induction hypothesis
  \begin{align}
    \label{eq:rhscalc}
    \push{\proj_{\Xb^{k+1}}} \circ \Nmap^{k+1} \fullstop
  \end{align}
  Using that $\push{\proj_\A} \circ \disint \A \B = \push{\proj_\A}$ we see for $l \geq k+1$
  \begin{multline*}
    \push{\proj_{\Xb^{k+1}}} \circ \push{\proj_{\Xb^{l}}} \circ \Nmap^l = \\ 
    \push{\proj_{\Xb^{k+1}}} \circ \push{\proj_{\Xb^{l}}} \circ \disint {\Xb^l} {\XF{l+1}} \circ \Nmap^{l+1} = \\
    \push{\proj_{\Xb^{k+1}}} \circ \push{\proj_{\Xb^{l}}} \circ \Nmap^{l+1} = \\
    \push{\proj_{\Xb^{k+1}}} \circ \push{\proj_{\Xb^{l+1}}} \circ \Nmap^{l+1} \text{ ,}
  \end{multline*}
  i.e.\ by induction \eqref{eq:rhscalc} is also equal to $ \push{\proj_{\Xb^{k+1}}} $.

  As a composite of continuous maps $\H^k$ is clearly continuous. (This is where we use Lemma \ref{lem:indkernel}.) As a map between sets $\H^k$ is just 
  \begin{align*}
    \disint {\Xb^k} {\XF{k+1}} \circ \H^{k+1} = \disint {\Xb^k} {\XF{k+1}} \circ \Nmap^{k+1} = \Nmap^k
  \end{align*}
  by induction hypothesis and definition of $\Nmap^k$.
\end{proof}

\subsection{Proof of \texorpdfstring{Lemma \ref{lem:indkernel}}{the induction step}}

In this part we prove Lemma \ref{lem:indkernel}. Here we use several of the ideas developed in the companion paper \cite{ModulusOfContinuity}. In particular we will need \cite[Lemma 4.2]{ModulusOfContinuity} which we reproduce below.

\begin{lemma}[{\cite[Lemma 4.2]{ModulusOfContinuity}}]
  \label{lem:helper}
  Let $\mu \in \FunP \X \Y$. For any $\epsilon > 0$ there is a $\delta > 0$ s.t. if
  \begin{align*}
    \nu \in \Pr { \X \times \Y } & \text{ with } \W \mu \nu < \delta \text{ and} \\
    \gamma \in \Cpl \mu \nu      & \text{ with } \tsint \D(x_1,x_2)^p \d\gamma(x_1,y_1,x_2,y_2) < \delta^p
  \end{align*}
  then
  \begin{align*}
    \tsint \D(y_1,y_2)^p \d\gamma(x_1,y_1,x_2,y_2) < \epsilon^p \fullstop
  \end{align*}
\end{lemma}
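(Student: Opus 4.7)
My plan is to argue by contradiction, exploiting weak compactness of couplings on $(\X \times \Y)^2$. Suppose the lemma failed. Then there would exist $\epsilon > 0$ and sequences $\nu_n \in \Pr{\X \times \Y}$ with $\W{\mu}{\nu_n} \to 0$, together with $\gamma_n \in \Cpl{\mu}{\nu_n}$ satisfying $\tsint \D(x_1, x_2)^p \d\gamma_n \to 0$ but $\tsint \D(y_1, y_2)^p \d\gamma_n \geq \epsilon^p$. Since $\Wp$-convergence of $\nu_n$ to $\mu$ implies tightness, both $(\X \times \Y)$-marginals of $\gamma_n$ are tight, hence $\{\gamma_n\}$ itself is tight on $(\X \times \Y)^2$; I would pass to a subsequence weakly convergent to some $\gamma_\infty$ whose two $(\X \times \Y)$-marginals are both $\mu$, so that $\gamma_\infty \in \Cpl{\mu}{\mu}$.

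The next step is to upgrade this weak convergence to convergence of the two $p$-th moment integrals of interest. Under Convention 1 (bounded $\D_\X$, $\D_\Y$) this is automatic. Under Convention 2, the pointwise bound $\D(x_1, x_2)^p \leq 2^{p-1}(\D(x_1, x_0)^p + \D(x_2, x_0)^p)$, combined with $\Wp$-convergence of the two $(\X \times \Y)$-marginals of $\gamma_n$ (which yields uniform $p$-th-moment integrability of the coordinates), shows that $(x_1,x_2) \mapsto \D(x_1,x_2)^p$ and $(y_1,y_2) \mapsto \D(y_1,y_2)^p$ are uniformly integrable along $\{\gamma_n\}$. Hence both $\tsint \D(x_1,x_2)^p \d\gamma_n$ and $\tsint \D(y_1,y_2)^p \d\gamma_n$ converge to the corresponding integrals against $\gamma_\infty$.

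From the first convergence, $\tsint \D(x_1,x_2)^p \d\gamma_\infty = 0$, so $X_1 = X_2$ holds $\gamma_\infty$-almost surely. Since both marginals of $\gamma_\infty$ equal $\mu \in \FunP{\X}{\Y}$, which is concentrated on the graph of some measurable $f : \X \to \Y$, we have $Y_1 = f(X_1)$ and $Y_2 = f(X_2)$ $\gamma_\infty$-a.s.; together with $X_1 = X_2$ a.s.\ this forces $Y_1 = Y_2$ $\gamma_\infty$-a.s., so $\tsint \D(y_1,y_2)^p \d\gamma_\infty = 0$. The second convergence of the previous paragraph then gives $\tsint \D(y_1,y_2)^p \d\gamma_n \to 0$, contradicting the standing lower bound $\epsilon^p > 0$. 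The main obstacle is the uniform-integrability upgrade in the second paragraph: it is precisely where one must use that the convergence is in $\Wp$ (and not merely weak), since otherwise weak convergence of $\gamma_n$ alone would fail to preserve the decay of the $x$-integral or propagate the vanishing of the limiting $y$-integral back to the sequence.
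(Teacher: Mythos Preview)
Your argument is correct. The paper itself does not supply a proof of this lemma; it is quoted verbatim from the companion paper \cite{ModulusOfContinuity} (as Lemma~4.2 there) and used as a black box in the proof of Lemma~\ref{lem:indkernel}. So there is no proof in the present paper to compare against.

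That said, your compactness-and-contradiction approach is sound and is a natural way to establish the result. A couple of minor remarks: for the $x$-integral you do not actually need uniform integrability, since lower semicontinuity of $\tsint \D(x_1,x_2)^p \d\gamma$ under weak convergence already forces $\tsint \D(x_1,x_2)^p \d\gamma_\infty \le \liminf_n \tsint \D(x_1,x_2)^p \d\gamma_n = 0$. Uniform integrability is genuinely needed only for the $y$-integral, where you must push the vanishing of the limit back to the sequence; your justification via $\Wp$-convergence of the marginals (hence convergence of $p$-th moments, hence uniform integrability of $\D(y_i,y_0)^p$) is the right one. The identification $Y_i = f(X_i)$ under $\gamma_\infty$ from the graph-concentration hypothesis on $\mu$ is exactly what closes the argument.
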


For easy reference we also restate Lemma \ref{lem:indkernel}.
\indkernel*

\begin{proof}[Proof of Lemma \ref{lem:indkernel}]
  Let $(\mu',\mu) \in \dom<> {\J \A \B \Y}$. Let $\epsilon > 0$.

  Choose $\delta > 0$ according to Lemma \ref{lem:helper} with $\X = \A \times \B$, i.e.\ s.t.\ for any $\nu \in \Pr { \A \times \B \times \Y }$ with $\W \mu \nu < \delta$ and any $\gamma \in \Cpl \mu \nu$ with $\tsint \D(a_1,a_2)^p + \D(b_1,b_2)^p \d\gamma(a_1,b_1,\noarg,a_2,b_2,\noarg) < \delta^p$ we have $\tsint \D(y_1,y_2)^p \d\gamma(\noarg,y_1,\noarg,y_2) < \epsilon^p$.

  Let $(\nu',\nu) \in \dom<>{\J \A \B \Y}$ with $\max(\D(\mu,\nu) , \D(\mu',\nu')) < \min(\delta , \epsilon)$.

  This means we can find $\gamma' \in \Cpl {\mu'} {\nu'}$ with
  \begin{align}
    \label{eq:selgamma'}
    \tsint \D(a_1,a_2)^p + \W<> {\hat b_1} {\hat b_2}^p \d\gamma'(a_1,\hat b_1, a_2, \hat b_2) < \min(\delta^p , \epsilon^p) \fullstop
  \end{align}

  \newcommand{\bmu}{\bar\mu}
  \newcommand{\bnu}{\bar\nu}
  Let $(a,b) \mapsto f_a(b)$ and $(a,b) \mapsto g_a(b): \A \times \B \rightarrow \Y$ be measurable functions on whose graph $\mu$ and $\nu$, respectively, are concentrated. Let $\bmu := \J \A \B \Y (\mu',\mu)$, $\bnu :=  \J \A \B \Y (\nu', \nu)$.
  
  \newcommand{\dmu}{\dot\mu}
  \newcommand{\dnu}{\dot\nu}
  As noted in the proof of Lemma \ref{lem:functionsremainfunctions} we know that for $\bmu$-a.a.\ $(a,\dmu)$ the measure $\dmu$ is concentrated on the graph of the function $f_a$ (and similarly for $\bnu$).
  This together with $\push{\id \A \times \push{\proj_\B}}(\bmu) = \mu'$ (which is a consequence of \eqref{eq:mu'mu}) implies that
  \begin{align*}
    \tsint h \d\bmu = \tsint h\pa{a,\push{\id \B \funcomma f_a}(\hat b)} \d\mu'(a,\hat b) 
  \end{align*}
  (again similarly for $\bnu$).

  \newcommand{\bgamma}{\bar\gamma}
  From this we see that the measure $\bgamma \in \Pr {\A \times \FunP \B \Y \times \A \times \FunP \B \Y}$ defined as
  \begin{align*}
    \tsint h \d\bgamma := 
    \tsint h\pa{a_1,\push{\id \B \funcomma f_{a_1}}(\hat b_1), a_2, \push{\id \B \funcomma g_{a_2}}(\hat b_2)} \d\gamma'(a_1,\hat b_1, a_2,\hat b_2)
  \end{align*}
  is in $\Cpl \bmu \bnu$.

  \newcommand{\hgamma}{\hat\gamma_{\hat b_1,\hat b_2}}
  We may measurably select almost-witnesses $\hgamma \in \Cpl<> {\hat b_1} {\hat b_2}$ for the distances $\W<> {\hat b_1} {\hat b_2}$ s.t.\ building on \eqref{eq:selgamma'} we get
  \begin{align}
    \label{eq:leftbound}
    \tsint \D(a_1,a_2)^p + \tsint \D(b_1,b_2)^p \d\hgamma(b_1,b_2) \d\gamma'(a_1,\hat b_1, a_2, \hat b_2) < \min(\delta^p , \epsilon^p) \fullstop
  \end{align}
  \newcommand\dolabel[1]{\stepcounter{equation}\tag{\theequation}\label{#1}}
  \newcommand{\hb}{\smash{\hat b}}
  Now
  \begin{align*}
    & \D(\bmu,\bnu)^p \leq \tsint \D_{\Pr { \A \times \Pr { \B \times \Y} }}^p \d \bgamma \\
    & = \tsint \D(a_1,a_2)^p + \W { \push{\id \B \funcomma f_{a_1}}(\hat b_1) } { \push{\id \B \funcomma g_{a_2}}(\hat b_2) } ^p \d\gamma'(a_1,\hb_1,a_1,\hb_2) \\
    & \leq \tsint \D(a_1,a_2)^p + \tsint \D(b_1,b_2)^p + \D\pa{f_{a_1}(b_1),g_{a_2}(b_2)}^p \d\hgamma(b_1,b_2) \d\gamma'(a_1,\hb_1,a_2,\hb_2) \\
    & = \tsint \D(a_1,a_2)^p + \D(b_1,b_2)^p + \D(y_1,y_2)^p \d\gamma(a_1,b_1,y_1,a_2,b_2,y_2) \dolabel{eq:tobebounded}
  \end{align*}
  where $\gamma \in \Cpl \mu \nu$ is defined as
  \begin{align*}
    \tsint h \d\gamma = \tsiint h\pa{a_1,b_1,f_{a_1}(b_1),a_2,b_2,g_{a_2}(b_2)} \d\hgamma(b_1,b_2) \d\gamma'(a_1,\hb_1,a_2,\hb_2) \fullstop
  \end{align*}
  The integral over the first two summands in \eqref{eq:tobebounded} is less than $\min(\delta^p , \epsilon^p)$ by \eqref{eq:leftbound}. By our choice of $\delta$ in the beginning this implies that the integral over the last summand is also less than $\epsilon^p$, so that overall
  \begin{align*}
    \D(\bmu,\bnu)^p < 2 \epsilon^p \fullstop
  \end{align*}
  Es $\epsilon$ was arbitrary this concludes the proof.
\end{proof}

\section{The symmetrized causal Wasserstein distance\texorpdfstring{ $\SCWp$}{}}
\label{sec:CausalAndAnti}

In this section we prove that the topology induced by $\SCWp$ is sandwiched between Hellwig's $\Wp$-information topology and the topology induced by $\AWp$, and therefore by what we have already seen in the previous section equal to both of them.
Our arguments in this section make explicit use of metrics. The reader who is only interested in the simpler version of our main theorem, Theorem \ref{thm:main_simple} may assume that $p = 1$ and that all metrics are bounded.

Remember that for $\mu, \nu \in \Pr \Xb$ we have
\begin{align}
  \label{eq:WcDef}
  \CWp(\mu,\nu)^p  & = \inf_{\substack{\gamma \in \Cpl \mu \nu \\ \gamma \text{ causal}}} \int \sum_{t=1}^N \D(x_t,y_t)^p \d\gamma((x_t)_t,(y_t)_t) \\
  \SCWp(\mu,\nu) & = \max\pa{\CWp(\mu,\nu),\CWp(\nu,\mu)} \\[2mm]
  \AWp(\mu,\nu)^p  & = \inf_{\substack{\gamma \in \Cpl \mu \nu \\ \gamma \text{ bicausal}}} \int \sum_{t=1}^N \D(x_t,y_t)^p \d\gamma((x_t)_t,(y_t)_t) \fullstop
\end{align}

In proving this we will take a slightly roundabout route. First we will focus on the case where $\Xb = \Xs_1 \times \Xs_2$ is the product of just two spaces, i.e.\ where we have only two time points. Moreover, for expositional purposes, let us for the moment assume that $\Xs_1$ and $\Xs_2$ are both compact. Generalizing from this setting will not be very hard.

In the compact, two-time-point case we will show equality of the two topologies in question by extending both to a larger (compact) space and showing equality of the topologies on that larger space.

In more detail:

When there are only two timepoints Hellwig's $\Wp$-information topology and the topology induced by $\AWp$ trivially coincide. Both are induced by emedding $\Pr {\Xs_1 \times \Xs_2}$ into $\Pr {\Xs_1 \times \Pr{\Xs_2}}$ via $\disint {\Xs_1} {\Xs_2}$. The latter space carries its standard metric $\D_{\Pr {\Xs_1 \times \Pr{\Xs_2}}}$, which -- as was already established in Theorem \ref{thm:Nembedding} in Section \ref{sec:DerErhabeneRaum} of the introduction -- is an extension of $\AWp$. To highlight this connection, in this section we will also refer to that metric as $\WWbc$. As a reminder, 
\begin{align*}
  \WWb \mu \nu ^p =  \inf_{\gamma \in \Cpl \mu \nu} \int \D(x_1,y_1)^p + \W {\xi_2} {\eta_2} ^p \d \gamma(x_1, \xi_2, y_1, \eta_2)
\end{align*}
where $\Wa$ is the normal Wasserstein distance (on $\Pr {\Xs_2}$ in this case).
We will find an extension $\WWcau$ of $\Wcau$ to $\Pr {\Xs_1 \times \Pr{\Xs_2}}$, which still satisfies all properties of a metric except for symmetry and which is dominated by $\WWbc$. Symmetrizing this extension gives a metric (which we will call $\WWcac$). The identity function from $\Pr {\Xs_1 \times \Pr{\Xs_2}}$ topologized with $\WWbc$ to $\Pr {\Xs_1 \times \Pr{\Xs_2}}$ topologized with $\WWcac$ will then be a continuous bijection from a compact space (this is where we use compactness of $\Xs_1$, $\Xs_2$) to a \emph{Hausdorff} space, i.e.\ a homeomorphism.

The next subsection will be devoted to finding an expression for the extension of $\Wcau$ to $\Pr {\Xs_1 \times \Pr{\Xs_2}}$ and proving that it satisfies all the properties mentioned above.

\begin{remark}
  When $\Xs_1$ contains no isolated points, because $\Pr {\Xs_1 \times \Pr{\Xs_2}}$ is the metric completion of $\Pr {\Xs_1 \times \Xs_2}$ w.r.t.\ $\Wbc$ and because the above properties imply that $\Wcau$ is (uniformly) continuous w.r.t.\ $\Wbc$, we have already uniquely identified $\WWcau$. Still, we want to find an expression that allows us to work with $\WWcau$ and in particular that allows us to prove that $\WWcac$ is a metric and not just a pseudometric, i.e.\ that the induced topology is in fact Hausdorff.
  This is exactly what we gain from assuming compact base spaces and passing to the completion: instead of having to find a lower bound for $\Wca \mu \nu$ in terms of $\Wb \mu \nu$ (and possibly $\mu$) we now just have to prove that if $\mu \neq \nu$ then $\WWca \mu \nu > 0$.
\end{remark}
  
  {\color{black}For definiteness we note that  we do \emph{not} assume,  compactness of any space in the following.}

\subsection{Extending the causal \texorpdfstring{`distance'}{'distance'}}

So now we are working with two Polish metric spaces $\Xs_1$, $\Xs_2$.
Remember that we denote the \enquote*{canonical process} on $\Xb := \Xs_1 \times \Xs_2$ by $(\Xr_i)_{i=1,2}$, i.e.\ $\Xr_i : \Xb \rightarrow \Xs_i$ is the projection onto the $i$-th coordinate.

To differentiate between the different roles that $\Xb$ may play - i.e.\ is it the space for the left measure $\mu$ or the right measure $\nu$ when measuring the \enquote*{distance} $\Wc \mu \nu$ - we will also refer to $\Xb$, $\Xs_i$ by the aliases $\Yb$, $\Ys_i$ respectively. (And later $\Zb$, $\Zs_i$ as well.)
Analogously, we have $\Yr_i : \Yb \rightarrow \Ys_i$. (And $\Zr_i : \Zb \rightarrow \Zs_i$.)

In this section we will repeatedly make use of the following construction:
\begin{definition}
  \label{def:dpro}
  Let $\A$, $\B$, $\C$ be Polish metric spaces.
  Let $\mu \in \Pr {\A \times \B}$ and $\nu \in \Pr {\B \times \C}$ with $\marg \mu \B = \marg \nu \B$.
  We define
  \begin{align*}
    \mu \dpro{\B} \nu \in \Pr {\A \times \B \times \C}
  \end{align*}
  as the measure given by
  \begin{align}
    \label{eq:dpro}
    \begin{aligned}
    \tsint h \d(\mu \dpro \B \nu) 
    & := \tsint h(a,b,c) \d\nu_b(c) \d\mu(a,b) \\
    & \phantom{:}= \tsint h(a,b,c) \d\mu_b(a) \d\nu(b,c)
    \end{aligned}
  \end{align}
  where $b \mapsto \nu_b$ is a disintegration of $\nu$ w.r.t.\ $\B$ and similarly for $\mu$.

  We further define
  \begin{align*}
    \mu \mcmo{\B} \nu := \marg {\pa{\mu \dpro{\B} \nu}} {\A \times \C} \in \Pr {\A \times \C} \fullstop
  \end{align*}
\end{definition}

\begin{remark}
  If $\mu$ is a probability on $\S A \times \S B$ and $\nu$ is a probability on $\S B \times \S C$, another way of saying what $\mu \dpro{\S B} \nu$ is, is to state that it is a probability on $\S A \times \S B \times \S C$ s.t.\ the law of $(\RV A, \RV B)$ is equal to $\mu$, the law of $(\RV B, \RV C)$ is equal to $\nu$ (where per our convention $\RV A$ is the projection onto $\S A$, etc.), and $\RV A$ is conditionally independent from $\RV C$ given $\RV B$. (For the notion of conditional independence see for example \cite[Definition II.43]{DeMeA}.)
  
  Another helpful intuition comes from looking at the case where $\mu \in \FunP \A \B$ is concentrated on the graph of some measurable function $f: \A \rightarrow \B$ and $\nu \in \FunP \B \C$ is concentrated on the graph of a measurable function $g: \B \rightarrow \C$.
  $\mu \mcmo \B \nu$ is then concentrated on the graph of $g \circ f : \A \rightarrow \C$.
  In some contexts $g \circ f$ is also written as $f \mcmp g$, which is where we borrowed the symbol from.
\end{remark}

\begin{remark}
  We will often encounter the situation that one of the factors $\A$, $\B$ or $\C$ in Definition \ref{def:dpro} is itself a product of spaces and the individual factors may not always be so nicely sorted. We will rely on naming in the subscript the space(s) along which to join the measures $\mu$ and $\nu$. For example if $\mu \in \Pr {\A_1 \times \B_1 \times \A_1 \times \B_2}$ and $\nu \in \Pr {\B_2 \times \C_1 \times \B_1 \times \C_2}$ we might write
  \begin{align*}
    \mu \dpro{\B_1,\B_2} \nu \in \Pr {\A_1 \times \B_1 \times \A_2 \times \B_2 \times \C_1 \times \C_2}
  \end{align*}
  to refer to the measure that we get when in \eqref{eq:dpro} we use $(b_1,b_2) \in \B_1 \times \B_2$ as the middle variable $b$.
  We will not be systematic about the order of the factors in the resulting product space on which e.g.\ $\mu \dpro{\B_1,\B_2} \nu$ is a measure, again relying on naming our spaces for disambiguation.
\end{remark}

For future reference we paraphrase the definition of a causal transport plan given in \eqref{def:adapted.coupling} in the introduction.
\begin{lemma}
  \label{lem:causaldef}
  Let $\mu$ be a measure on $\Xb = \Xs_1 \times \Xs_2$ and $\nu$ be a measure on $\Yb = \Ys_1 \times \Ys_2$. $\gamma \in \Cpl \mu \nu$ is a causal transference plan from $\mu$ to $\nu$ iff under $\gamma$
  \begin{align*}
    \text{$\Xr_2$ and $\Yr_1$ are conditionally independent given $\Xr_1$.}
  \end{align*}
\end{lemma}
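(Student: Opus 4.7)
The plan is to directly unpack the definition of causality \eqref{def:adapted.coupling} in the case $N=2$ and recognize it as a single conditional independence statement.

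First I would observe that when $N=2$, the defining condition
\[
\gamma\bigl((Y_1,\dots,Y_t)\in A \mid X\bigr) = \gamma\bigl((Y_1,\dots,Y_t)\in A \mid X_1,\dots,X_t\bigr)
\]
is automatic at $t=2$ (both sides already condition on all of $X = (X_1,X_2)$), so causality is equivalent to the $t=1$ requirement
\[
\gamma(Y_1\in A \mid X_1, X_2) = \gamma(Y_1\in A \mid X_1) \quad \text{for all measurable } A \subseteq \Ys_1.
\]

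Next I would invoke the standard characterization of conditional independence: under $\gamma$, the random variables $\Xr_2$ and $\Yr_1$ are conditionally independent given $\Xr_1$ precisely when the conditional law of $\Yr_1$ given $(\Xr_1, \Xr_2)$ coincides with the conditional law of $\Yr_1$ given $\Xr_1$ alone, i.e.\ when
\[
\gamma(\Yr_1 \in A \mid \Xr_1, \Xr_2) = \gamma(\Yr_1 \in A \mid \Xr_1)
\]
for every measurable $A \subseteq \Ys_1$. This is exactly the reformulation of causality obtained in the previous paragraph, so both implications of the lemma follow simultaneously.

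There is essentially no real obstacle here: the whole content is the observation that in two time periods the defining property of a causal coupling collapses to the single conditional independence $\Xr_2 \perp \Yr_1 \mid \Xr_1$. The only minor points needing care are that regular conditional probabilities (disintegrations) exist, which is automatic in the Polish setting, and that the $t=2$ clause of the causality definition adds nothing beyond the property $\gamma \in \Cpl \mu \nu$ of being a coupling.
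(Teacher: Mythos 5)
Your proof is correct and is essentially the same argument the paper gives: for $N=2$ the causality condition collapses to $\gamma(Y_1\in A\mid X_1,X_2)=\gamma(Y_1\in A\mid X_1)$, which the paper identifies with conditional independence by citing a standard reference (Dellacherie--Meyer), whereas you spell the equivalence out explicitly.
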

\begin{proof}
  One way of formulating conditional independence is as in \eqref{def:adapted.coupling}, see for example \cite[Definition II.43, Theorem II.45]{DeMeA}.
\end{proof}

In other words, $\gamma \in \Cpl \mu \nu$ is a causal transference plan iff $\marg \gamma {\X_1,\X_2,\Y_1} = \mu \dpro{\X_1} \marg \gamma {\X_1,\Y_1}$.

We start by reexpressing $\Wcau$ in different ways until we find one which also makes sense in $\Pr { \Xs_1 \times \Pr {\Xs_2}}$.

Let $\mu \in \Pr \Xb$ and $\nu \in \Pr \Yb$. Then

\newcommand{\Cstr}[1]{C_{#1}}
\begin{align*}
  \Wc \mu \nu ^p & = \inf_{\substack{\gamma \in \Cpl \mu \nu \\ \gamma \text{ causal}}} \int \D(x_1,y_1)^p + \D(x_2,y_2)^p \d \gamma(x_1,x_2,y_1,y_2) \\
      & = \inf_{\gamma \in \Cstr 1} \int \D(x_1,y_1)^p + \D(x_2,y_2)^p \d \gamma(x_1,x_2,y_1,y_2)
    \intertext{ where }
    \Cstr 1 & = \set {\gamma \in \Cpl \mu \nu}[\gamma = \pa{ \mu \dpro{\Xs_1} \marg \gamma {\Xs_1,\Ys_1} } \dpro{\Xs_2,\Ys_1} \marg \gamma {\Xs_2, \Ys_1, \Ys_2}] \fullstop
\end{align*}
This is true because, on the one hand clearly a $\gamma \in \Cstr 1$ is causal by Lemma \ref{lem:causaldef} and the alternative characterization of $\dpro{\Xs_1}$. On the other hand, given any causal $\gamma \in \Cpl \mu \nu$, again by Lemma \ref{lem:causaldef}, $\marg \gamma {\Xs_1,\Xs_2,\Ys_1} = \mu \dpro{\Xs_1} \marg \gamma {\Xs_1,\Ys_1}$, and we may define $\gamma' := \pa{ \mu \dpro{\Xs_1} \marg \gamma {\Xs_1,\Ys_1} } \dpro{\Xs_2,\Ys_1} \marg \gamma {\Xs_2, \Ys_1, \Ys_2} \enskip \in \Cpl \mu \nu$.
Now $\marg \gamma {\Xs_1,\Xs_2,\Ys_1} = \marg {\gamma'} {\Xs_1,\Xs_2,\Ys_1}$ and $\marg \gamma {\Xs_2,\Ys_1,\Ys_2} = \marg {\gamma'} {\Xs_2,\Ys_1,\Ys_2}$, so in particular
\begin{multline*}
  \int \D(x_1,y_1)^p + \D(x_2,y_2)^p \d \gamma(x_1,x_2,y_1,y_2) = \\ \int \D(x_1,y_1)^p + \D(x_2,y_2)^p \d \gamma'(x_1,x_2,y_1,y_2) \fullstop
\end{multline*}

We may name the different building blocks of $\gamma \in \Cstr 1$ to get
\begin{align*}
  \Wc \mu \nu ^p = 
  \inf_{(\gamma,\beta) \in \Cstr 2} \int \D(x_1,y_1)^p \d \gamma(x_1,y_1) + \int \D(x_2,y_2)^p \d \beta(y_1,x_2,y_2)
\end{align*}
with
\begin{multline*}
  \Cstr 2 = \set<\Big>{(\gamma,\beta) \in \Cpl{\marg \mu {\Xs_1}}{\marg \nu {\Ys_1}} \times \Pr {\Ys_1 \times \Xs_2 \times \Ys_2}}[\\\marg \beta {\Xs_2, \Ys_1} = \mu \mcmo{\Xs_1} \gamma \text{ and } \marg \beta {\Ys_1,\Ys_2} = \nu] \text{ ,}
\end{multline*}
i.e.\ there is a bijection between $\Cstr 1$ and $\Cstr 2$ given by sending $\gamma' \in \Cstr 1$ to $(\gamma, \beta) \in \Cstr 2$ where $\gamma := \marg {\gamma'} {\Xs_1,\Ys_1}$, $\beta := \marg {\gamma'} {\Xs_2,\Ys_1,\Ys_2}$, and, in the other direction, by sending $(\gamma, \beta) \in \Cstr 2$ to $\gamma' := \pa{ \mu \dpro{\Xs_1} \gamma } \dpro{\Xs_2,\Ys_1} \beta$.

We can apply the bijection $\diso {\Ys_1} : \Pr {\Ys_1 \times \Xs_2 \times \Ys_2} \rightarrow \FunP {\Ys_1} {\Pr {\Xs_2 \times \Ys_2}}$ to $\beta$. Translating the conditions on $(\gamma,\beta) \in \Cstr 2$ to conditions on $(\gamma, \diso {\Ys_1} (\beta))$ we arrive at
\begin{align*}
  \Wc \mu \nu ^p = 
  \inf_{(\gamma,\beta) \in \Cstr 3} \int \D(x_1, y_1)^p \d \gamma + \int \int \D(x_2,y_2)^p \d \beta'(x_2,y_2) \d \beta(y_1, \beta') 
\end{align*}
where
\begin{multline*}
  \Cstr 3 = \set<\Big>{(\gamma,\beta) \in \Cpl{\marg \mu {\Xs_1}}{\marg \nu {\Ys_1}} \times \FunP {\Ys_1} {\Pr {\Xs_2 \times \Ys_2}}}[ \\ \push{\id{\Ys_1} \times \push{\Yr_2}}(\beta) = \diso {\Ys_1} (\nu) \text{ and } \\ \push{\id{\Ys_1} \times \push{\Xr_2}}(\beta) = \diso {\Ys_1} \pa{ \gamma \mcmo{\Xs_1} \mu } ] \fullstop
\end{multline*}

Let $(\gamma, \beta) \in \Cstr 3$ and let $(y_1, \beta') \mapsto \tilde \beta ' _{y_1, \beta'}$ be a measurable mapping with $\tilde \beta ' _{y_1, \beta'} \in \Cpl {\marg {\beta'} {\Xs_2}} {\marg {\beta'} {\Ys_2}}$ for $\beta$-a.a.\ $(y_1,\beta')$. Then we have that also $(\gamma,\tilde\beta) \in \Cstr 3$, where $\tilde\beta \in \FunP {\Ys_1} {\Pr {\Xs_2 \times \Ys_2}}$ is defined by
\begin{align*}
  \tilde\beta := f \mapsto \int f(y_1,\tilde\beta'_{y_1,\beta'}) \d \beta(y_1,\beta') \fullstop
\end{align*}

By employing a $\beta$-a.e.\ measurable selector this implies that
\begin{align*}
  \Wc \mu \nu ^p & = 
  \inf_{(\gamma,\beta) \in \Cstr 3} {\textstyle \int} \D(x_1, y_1)^p \d \gamma + {\textstyle \int} \quad \inf_{\raisebox{0pt}[1.1em][0em]{}\mathclap{\substack{\tilde \beta' \in \\ \Cpl {\marg {\beta'} {\Xs_2}} {\marg {\beta'} {\Ys_2}}}}} \quad {\textstyle \int} \D(x_2,y_2)^p \d \tilde\beta'(x_2,y_2) \d \beta(y_1, \beta') \\[2mm]
  & = \inf_{(\gamma,\beta) \in \Cstr 3} {\textstyle \int} \D(x_1, y_1)^p \d \gamma + {\textstyle \int} \quad \W {\marg {\beta'} {\Xs_2}} {\marg {\beta'} {\Ys_2}} ^p \d \beta(y_1, \beta')
  \fullstop
\end{align*}

We need
\begin{lemma}
  \label{lem:conddetimplcondind}
  If $\kappa \in \Pr {\S A \times \S B}$ and $\lambda \in \FunP {\S B} {\S C}$ then the only measure $\eta \in \Pr {\S A \times \S B \times \S C}$ with $\marg \eta {\S A \times \S B} = \kappa$ and $\marg \eta {\S B \times \S C} = \lambda$ is $\kappa \dpro{\S B} \lambda$.
\end{lemma}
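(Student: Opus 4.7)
The plan is to exploit the fact that $\lambda \in \FunP \B \C$ means that, under $\lambda$, the $\C$-coordinate is almost surely a deterministic function of the $\B$-coordinate. Once any measure $\eta$ with the prescribed $\B\times\C$-marginal has this same deterministic property, the $\A$- and $\C$-coordinates are automatically conditionally independent given $\B$ (the conditional of $\C$ given $\A,\B$ can only be the Dirac measure $\delta_{f(\B)}$), which is exactly what singles out $\kappa \dpro \B \lambda$.

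Concretely, first I would check that the object $\kappa \dpro \B \lambda$ is even well-defined: the hypotheses $\marg \eta {\A\times\B} = \kappa$ and $\marg \eta {\B\times\C} = \lambda$ force $\marg \kappa \B = \marg \eta \B = \marg \lambda \B$, so the disintegration-based construction of Definition \ref{def:dpro} applies. Next, pick a measurable $f:\B\to\C$ with $\lambda(\{(b,c) : c = f(b)\}) = 1$, which exists by definition of $\FunP \B \C$. Since $\marg \eta {\B\times\C} = \lambda$, one immediately gets
\begin{align*}
  \eta\pa<\big>{\{(a,b,c) \in \A\times\B\times\C : c = f(b)\}} = 1 \fullstop
\end{align*}

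To conclude I would compare the two measures against an arbitrary bounded measurable test function $h:\A\times\B\times\C \to \R$. On the one hand, the identity above lets me replace $c$ by $f(b)$ $\eta$-a.s., whence
\begin{align*}
  \tsint h \d\eta = \tsint h(a,b,f(b)) \d\eta(a,b,c) = \tsint h(a,b,f(b)) \d\kappa(a,b),
\end{align*}
using $\marg \eta {\A\times\B} = \kappa$ in the second equality. On the other hand, because $\lambda \in \FunP \B \C$ its disintegration satisfies $\lambda_b = \delta_{f(b)}$ for $\marg \lambda \B$-a.a.\ $b$, hence for $\marg \kappa \B$-a.a.\ $b$, so expanding the definition of $\dpro \B$ gives
\begin{align*}
  \tsint h \d(\kappa \dpro \B \lambda) = \tsiint h(a,b,c) \d\lambda_b(c) \d\kappa(a,b) = \tsint h(a,b,f(b)) \d\kappa(a,b) \fullstop
\end{align*}
Thus $\eta$ and $\kappa \dpro \B \lambda$ integrate every bounded measurable $h$ to the same value, so they coincide.

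There is really no serious obstacle; the only point to be careful about is the almost-sure manipulation $c \mapsto f(b)$ under $\eta$, which is justified by the prescribed $\B\times\C$-marginal, and matching the null sets for $\marg \lambda \B$ and $\marg \kappa \B$ (which are equal).
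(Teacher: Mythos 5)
Your proof is correct and rests on the same key observation as the paper's: because $\lambda \in \FunP \B \C$, the conditional law of the $\C$-coordinate given $\B$ is Dirac, which rigidly pins down $\eta$. The paper implements this by disintegrating all three measures w.r.t.\ $\B$ and noting that $\eta_b$ must have marginals $\kappa_b$ and $\lambda_b = \delta_{f(b)}$, forcing $\eta_b = \kappa_b \otimes \lambda_b$; you instead push everything onto the graph of $(a,b)\mapsto(a,b,f(b))$ and verify equality against test functions. The two routes are essentially equivalent, with yours being marginally more elementary in that it avoids invoking a disintegration of $\eta$ itself.
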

\begin{proof}
  If $\eta$ satisfies the properties above and $b \mapsto \kappa_b$, $b \mapsto \lambda_b$ are (classical) disintegrations of $\kappa$, $\lambda$ w.r.t.\ $\B$, then a (classical) disintegration $b \mapsto \eta_b$ of $\eta$ w.r.t.\ $\B$ has to satisfy $\marg {{\eta_b}} \A = \kappa_b$ and $\marg {{\eta_b}} \C = \lambda_b$ a.s. As $\lambda_b$ is a Dirac measure a.s.\ this forces $\eta_b$ to be $\kappa_b \mprod \lambda_b$ almost surely.
\end{proof}

This implies that for $(\gamma, \beta) \in \Cstr 3$ the distribution of
\begin{align}
  \label{eq:cstr4}
  (y_1,\beta') \mapsto (y_1, {\marg {\beta'} {\Xs_2}}, {\marg {\beta'} {\Ys_2}})  
\end{align}
under $\beta$ is already determined by $\gamma$, i.e.\ because the distribution of $(y_1,\beta') \mapsto (y_1, {\marg {\beta'} {\Xs_2}})$ is $\diso {\Ys_1} \!\!\pa{ \gamma \mcmo{\Xs_1} \mu }$ and the distribution of $(y_1,\beta') \mapsto (y_1, {\marg {\beta'} {\Ys_2}})$ is $\diso {\Ys_1} \!(\nu)$,
the distribution of \eqref{eq:cstr4} under $\beta$ must be equal to
\begin{align*}
  \diso {\Ys_1} \!\!\pa{ \gamma \mcmo{\Xs_1} \mu } \dpro{\Ys_1} \diso {\Ys_1} \!(\nu) \fullstop
\end{align*}

This means that we may get rid of $\beta$:

\begin{multline*}
  \Wc \mu \nu ^p =
  \inf_{\mathclap{\gamma \in \Cpl { \marg \mu {\Xs_1} } { \marg \nu {\Ys_1} } }} \quad {\textstyle \int} \D(x_1,y_1)^p \d \gamma \\ + {\textstyle \int} \W {\mu'} {\nu'} ^p \d \pa { \diso {\Ys_1} \!\!\pa{ \gamma \mcmo{\Xs_1} \mu } \dpro{\Ys_1} \diso {\Ys_1} \!(\nu) } (y_1,\mu',\nu')
\end{multline*}

For the final step we need another lemma:

\begin{lemma}
  Let $\lambda \in \Pr { \As \times \Bs }$ and $\beta \in \Pr { \Bs \times \Cs }$. Let $\CCr$ denote the projection onto $\Pr \Cs$. Then 
  \begin{align*}
    \diso \As \pa { \lambda \mcmo \Bs \beta }
  \end{align*}
  is equal to the distribution of
  \begin{align*}
    (\Ar, \E[\eta]{\CCr}[\Ar] )
    & \text{ under }
    \eta := \lambda \dpro \Bs \diso \Bs (\beta) \fullstop
  \end{align*}
\end{lemma}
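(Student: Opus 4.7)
The plan is to observe that both sides are elements of $\Pr{\As \times \Pr\Cs}$, and to show that they share the same $\As$-marginal and the same disintegration with respect to $\As$. Once this is done, the equality follows from the (almost sure) uniqueness of classical disintegrations.

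First I would fix a disintegration $a \mapsto \lambda_a$ of $\lambda$ w.r.t.\ $\As$ and a disintegration $b \mapsto \beta_b$ of $\beta$ w.r.t.\ $\Bs$. Unwinding Definition~\ref{def:dpro}, for any test function $h$ on $\As \times \Cs$ one has
\begin{align*}
  \tsint h \d(\lambda \mcmo\Bs \beta) = \tsiint h(a,c) \d\beta_b(c) \d\lambda_a(b) \d \marg\lambda\As(a),
\end{align*}
so the $\As$-marginal of $\lambda \mcmo\Bs \beta$ is $\marg\lambda\As$, and its disintegration w.r.t.\ $\As$ is characterized by
\begin{align*}
  (\lambda \mcmo\Bs \beta)_a = \tsint \beta_b \d\lambda_a(b) \quad \text{for } \marg\lambda\As\text{-a.e.\ } a,
\end{align*}
where the right-hand side is understood as the barycenter in $\Pr\Cs$. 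Consequently $\diso\As(\lambda \mcmo\Bs \beta)$ is the pushforward of $\marg\lambda\As$ under $a \mapsto (a, \tsint \beta_b \d\lambda_a(b))$.

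For the right-hand side, I would unfold $\eta = \lambda \dpro\Bs \diso\Bs(\beta)$ the same way. Since $\diso\Bs(\beta)$ is concentrated on the graph of $b \mapsto \beta_b$, one obtains
\begin{align*}
  \tsint h \d\eta = \tsint h(a,b,\beta_b) \d\lambda_a(b) \d\marg\lambda\As(a)
\end{align*}
for test $h$ on $\As \times \Bs \times \Pr\Cs$. In particular $\marg\eta\As = \marg\lambda\As$, and conditionally on $\Ar=a$, the law of $\CCr$ under $\eta$ is the pushforward of $\lambda_a$ under $b \mapsto \beta_b$. Taking the barycenter on $\Pr\Cs$ (i.e.\ the conditional expectation of the random measure $\CCr$) yields
\begin{align*}
  \E[\eta]{\CCr}[\Ar=a] = \tsint \beta_b \d\lambda_a(b) \quad \text{for } \marg\lambda\As\text{-a.e.\ } a.
\end{align*}
Hence the distribution of $(\Ar, \E[\eta]{\CCr}[\Ar])$ under $\eta$ is the pushforward of $\marg\lambda\As$ under the very same map $a \mapsto (a, \tsint \beta_b \d\lambda_a(b))$, matching $\diso\As(\lambda \mcmo\Bs \beta)$.

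The only non-formal point is the identification of the conditional expectation $\E[\eta]{\CCr}[\Ar=a]$ as the barycenter $\tsint \beta_b \d\lambda_a(b)$, which requires specifying in what sense one integrates $\Pr\Cs$-valued random variables; this is routine (integrate against a countable measure-determining family of bounded continuous functions on $\Cs$, or equivalently use that $\Pr\Cs$ is a convex subset of the dual of $C_b(\Cs)$). With that identification, the argument is essentially bookkeeping and no genuine obstacle arises.
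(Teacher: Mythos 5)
Your proposal is correct and follows essentially the same route as the paper's own proof: both compute $\diso\As(\lambda \mcmo\Bs \beta)$ as the pushforward of $\marg\lambda\As$ under $a \mapsto (a, \tsint \beta_b \d\lambda_a(b))$, and both identify $\E[\eta]{\CCr}[\Ar]$ with the same barycenter by observing that a disintegration of $\diso\Bs(\beta)$ w.r.t.\ $\Bs$ is $b \mapsto \delta_{\beta_b}$. Your closing remark on the sense in which one integrates $\Pr\Cs$-valued random variables is a reasonable point that the paper leaves implicit, but it does not represent a divergence in approach.
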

\begin{proof}
  Let $a \mapsto \lambda_a$ be a version of the (classical) disintegration of $\lambda$ w.r.t.\ $\A$ and let $b \mapsto \beta_b$ be a disintegration of $\beta$ w.r.t.\ $\B$.

  As one easily checks, a version of the (classical) disintegration of $\lambda \mcmo \Bs \beta$ w.r.t.\ $\A$ is given by $a \mapsto \int \beta_b \d \lambda_a(b)$, so that $\diso \As \pa { \lambda \mcmo \Bs \beta }$ is equal to $$\push<\Big> {a \mapsto (a, \tsint \beta_b \d \lambda_a(b))} (\marg \lambda \As) \fullstop$$

  By the same argument a version of the disintegration of $\lambda \mcmo \Bs \diso \Bs (\beta)$ w.r.t.\ $\A$ is given by $h := a \mapsto \int \diso \Bs (\beta)_b \d \lambda_a(b)$, where $b \mapsto \diso \Bs (\beta)_b$ is a disintegration of $\diso \Bs (\beta)$ w.r.t.\ $\B$. But such a disintegration is given by $b \mapsto \delta_{\beta_b}$, (where $\delta_{\beta_b}$ is the dirac measure at $\beta_b$). So $h = a \mapsto \int \delta_{\beta_b} \d \lambda_a(b)$. This means (a version of) $\E[\eta]{\CCr}[\Ar]$ is given by
  \begin{align*}
    \E[\eta]{\CCr}[\Ar](a,\noarg,\noarg)
    = \tsint \hat c \d (\tsint \delta_{\beta_b} \d \lambda_a(b))(\hat c)
    = \tsiint \hat c \d \delta_{\beta_b}(\hat c) \d \lambda_a(b) = \tsint \beta_b \d \lambda_a(b) \text{ ,}
  \end{align*}
  so that the distribution of $(\Ar, \E[\eta]{\CCr}[\Ar] )$ under $\eta$ is also given by $$\push<\Big> {a \mapsto (a, \tsint \beta_b \d \lambda_a(b))} (\marg \lambda \As) \fullstop$$
\end{proof}

Using this lemma with $\As = \Ys_1$, $\Bs = \Xs_1$, $\Cs = \Xs_2$, $\lambda = \gamma$, $\beta = \mu$ and writing $\XXr$, $\YYr$ for the projections onto $\Pr {\Xs_2}$, $\Pr {\Ys_2}$ respectively, we find:
\begin{align*}
  \Wc \mu \nu ^p = \quad
  \inf_{\mathclap{\gamma \in \Cpl { \marg \mu {\Xs_1} } { \marg \nu {\Ys_1} } }} \quad \E[\gamma]{\D(\Xr_1, \Yr_1)^p} + \E[\eta(\gamma)]{\W {\E[\eta(\gamma)]{\XXr}[\Yr_1]} {\YYr} ^p}
\end{align*}
where $\eta(\gamma) := \diso {\Xs_1} (\mu) \dpro {\Xs_1} \gamma \dpro {\Ys_1} \diso {\Ys_1} (\nu)$.

By Lemma \ref{lem:conddetimplcondind} the function $\eta : \Cpl { \marg \mu {\Xs_1} } { \marg \nu {\Ys_1} } \rightarrow \Cpl {\diso {\Xs_1} (\mu)} {\diso {\Ys_1} (\nu)}$ is a bijection, so we may as well write
\begin{align*}
  \Wc \mu \nu ^p = \quad
  \inf_{\mathclap{\gamma \in \Cpl {\diso {\Xs_1} (\mu)} {\diso {\Ys_1} (\nu)}}} \quad \E[\gamma]{\D(\Xr_1, \Yr_1)^p} + \E[\gamma]{\W {\E[\gamma]{\XXr}[\Yr_1]} {\YYr} ^p} \fullstop
\end{align*}

Finally, under any $\gamma \in \Cpl {\diso {\Xs_1} (\mu)} {\diso {\Ys_1} (\nu)}$ we know that $\YYr$ is almost surely equal to a function of $\Yr_1$, so that the completions of the sigma-algebras generated by $\Yr_1$ and $\VYr := (\Yr_1,\YYr)$ respectively are equal. This means that $\E[\gamma]{\XXr}[\Yr_1] = \E[\gamma]{\XXr}[\VYr]$ a.s.\ and we arrive at our final expression for $\Wc \mu \nu$:
\begin{align*}
  \Wc \mu \nu = \inf_{\gamma \in \Cpl {\diso {\Xs_1} (\mu)} {\diso {\Ys_1} (\nu)}} \pa<\Big>{ \E<\big>[\gamma]{\D(\Xr_1, \Yr_1)^p + \Wa \pa{\E[\gamma]{\XXr}[\VYr], \YYr} ^p} }^{1/p} \fullstop
\end{align*}

Now this expression is trivial to generalize to $\mu \in \Pr {\Xs_1 \times \Pr {\Xs_2}}$ and $\nu \in \Pr {\Ys_1 \times \Pr {\Ys_2}}$, i.e.\ for such $\mu$, $\nu$ we set
\begin{align}
  \label{eq:WcLifted}
    \WWc \mu \nu & := \inf_{\gamma \in \Cpl \mu \nu} \pa<\bigg>{ \E<\Big>[\gamma]{\D(\Xr_1, \Yr_1)^p + \Wa\pa<\big>{ \E[\gamma]{\XXr}[\VYr], \YYr} ^p} }^{1/p} \fullstop
\end{align}

To summarize our discussion up to this point:
\begin{lemma} The function
  \begin{align*}
    \WWcau : \Pr {\Xs_1 \times \Pr {\Xs_2}}^2 \rightarrow \R_+
  \end{align*}
  as defined in \eqref{eq:WcLifted} is really an extension of
  \begin{align*}
    \Wcau : \Pr {\Xs_1 \times \Xs_2}^2 \rightarrow \R_+
  \end{align*}
  as defined in \eqref{eq:WcDef} (when $\Pr {\Xs_1 \times \Xs_2}$ is embedded into $\Pr {\Xs_1 \times \Pr {\Xs_2}}$ via $\diso{\Xs_1}$).
\end{lemma}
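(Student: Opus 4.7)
The plan is to observe that the long chain of manipulations developed in the paragraphs preceding the lemma actually \emph{constitutes} the proof: each of the displayed reformulations of $\Wc\mu\nu^p$ is an equality, and the terminal expression is by inspection the right-hand side of \eqref{eq:WcLifted} evaluated at $\diso{\Xs_1}(\mu)$ and $\diso{\Ys_1}(\nu)$. Thus the task is to justify each intermediate equality in turn.

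The key steps, in order, are as follows. First, Lemma \ref{lem:causaldef} rewrites causality of $\gamma\in\Cpl\mu\nu$ as the conditional-independence identity $\marg\gamma{\Xs_1,\Xs_2,\Ys_1} = \mu\dpro{\Xs_1}\marg\gamma{\Xs_1,\Ys_1}$, so that every causal $\gamma$ is determined by its two marginals on $(\Xs_1,\Ys_1)$ and $(\Xs_2,\Ys_1,\Ys_2)$; that is the content of the passage from the definition of $\Wc\mu\nu^p$ to the parametrizations $\Cstr 1$ and $\Cstr 2$. Second, $\diso{\Ys_1}$ applied to the second component lands it in $\FunP{\Ys_1}{\Pr{\Xs_2\times\Ys_2}}$, yielding $\Cstr 3$. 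Third, a measurable selection of $\epsilon$-optimal couplings between $\marg{\beta'}{\Xs_2}$ and $\marg{\beta'}{\Ys_2}$ allows the interchange of infimum and integration against $\beta$, replacing the inner cost by $\Wa\pa{\marg{\beta'}{\Xs_2},\marg{\beta'}{\Ys_2}}^p$. Fourth, Lemma \ref{lem:conddetimplcondind} forces the joint law of $(\Yr_1,\marg{\beta'}{\Xs_2},\marg{\beta'}{\Ys_2})$ to be determined by $\gamma$, eliminating $\beta$. Fifth, the auxiliary lemma on $\diso\As(\lambda\mcmo\Bs\beta)$ rewrites this law in terms of $\E[\gamma]{\XXr}[\Yr_1]$ under some $\gamma\in\Cpl{\diso{\Xs_1}(\mu)}{\diso{\Ys_1}(\nu)}$, and the bijection between $\Cpl{\marg\mu{\Xs_1}}{\marg\nu{\Ys_1}}$ and $\Cpl{\diso{\Xs_1}(\mu)}{\diso{\Ys_1}(\nu)}$ given by the same lemma lets me reparametrize the infimum. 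Finally, conditioning on $\Yr_1$ can be replaced by conditioning on $\VYr = (\Yr_1,\YYr)$, since under any $\gamma$ of this form $\YYr$ is a $\gamma$-a.s.\ function of $\Yr_1$; this produces precisely \eqref{eq:WcLifted}.

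The only technical point that is not a pure bookkeeping exercise with marginals, disintegrations and conditional expectations is the measurable selection in the third step; this is the main obstacle I anticipate, but it is classical and can be handled by a Kuratowski--Ryll-Nardzewski type argument or an $\epsilon$-approximate-kernel construction, yielding the same value up to $\epsilon$ and hence in the limit.
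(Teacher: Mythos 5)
Your proposal is correct and matches the paper exactly: the lemma is stated in the paper precisely as a summary ("To summarize our discussion up to this point:") of the chain of reformulations of $\Wc \mu \nu^p$ that precedes it, so there is no separate proof beyond verifying each displayed equality, which you have done step by step. The measurable-selection point you flag is indeed the only non-bookkeeping ingredient, and the paper handles it in the same way, by invoking a $\beta$-a.e.\ measurable selector of $\epsilon$-optimal couplings.
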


Next we promised to show
\begin{lemma}
  \label{lem:WcleqWb}
  $\WWcau$ is bounded by $\WWbc$, i.e.\ 
\begin{align*}
  \WWc \mu \nu \leq \inf_{\gamma \in \Cpl \mu \nu} \pa<\Big>{ \E<\big>[\gamma]{\D(\Xr_1, \Yr_1)^p + \Wa (\XXr,\YYr)^p} }^{1/p} = \WWb \mu \nu \fullstop
\end{align*}
\end{lemma}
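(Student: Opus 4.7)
\emph{Plan.} Both infima range over the same set $\Cpl \mu \nu$, and the two integrands agree on the first summand $\D(\Xr_1,\Yr_1)^p$. Hence it suffices to show that for each fixed $\gamma \in \Cpl \mu \nu$,
\begin{align*}
\E[\gamma]{\Wa\pa<\big>{\E[\gamma]{\XXr}[\VYr], \YYr}^p} \leq \E[\gamma]{\Wa(\XXr, \YYr)^p},
\end{align*}
and then to take the infimum over $\gamma$ on both sides. The main tool is Jensen's inequality applied to the convex functional $\mu \mapsto \Wa(\mu,\eta)^p$.

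\emph{Step 1 (convexity).} I would first verify that for any fixed $\eta \in \Pr{\Ys_2}$ the map $\mu \mapsto \Wa(\mu,\eta)^p$ is convex on $\Pr{\Ys_2}$. More generally, for any probability $\alpha$ on $\Pr{\Ys_2}$ with barycenter $\bary(\alpha) := \int \hat\mu \, \d\alpha(\hat\mu)$, a measurable selection of optimal couplings $\pi_{\hat\mu} \in \Cpl{\hat\mu}{\eta}$ and mixing yields a coupling of $\bary(\alpha)$ to $\eta$, so that
\begin{align*}
\Wa(\bary(\alpha),\eta)^p \leq \tsint \Wa(\hat\mu,\eta)^p \, \d\alpha(\hat\mu).
\end{align*}

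\emph{Step 2 (conditional Jensen).} Fix $\gamma \in \Cpl \mu \nu$. Since $\VYr = (\Yr_1, \YYr)$, the random measure $\YYr$ is $\sigma(\VYr)$-measurable. Interpreting $\E[\gamma]{\XXr}[\VYr]$ as the barycenter of the regular $\sigma(\VYr)$-conditional law of $\XXr$ under $\gamma$, and applying the convexity of Step~1 fibrewise with $\eta = \YYr$ (which is constant on each fibre of $\VYr$), yields
\begin{align*}
\Wa\pa<\big>{\E[\gamma]{\XXr}[\VYr], \YYr}^p \leq \E[\gamma]{\Wa(\XXr,\YYr)^p}[\VYr] \quad \text{$\gamma$-a.s.}
\end{align*}
Integrating against $\gamma$ and using the tower property gives the displayed inequality. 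Adding $\E[\gamma]{\D(\Xr_1,\Yr_1)^p}$ to both sides and taking $\inf_{\gamma}$ produces $\WWc \mu \nu \leq \WWb \mu \nu$.

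\emph{Main obstacle.} The only subtle point is the correct interpretation of the measure-valued conditional expectation $\E[\gamma]{\XXr}[\VYr]$ as the barycenter of a regular conditional distribution, so that the conditional Jensen inequality applies verbatim; this is standard in the Polish setting but deserves to be spelled out. Once this identification is in place, the proof amounts to little more than a two-line application of convexity.
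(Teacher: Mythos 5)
Your proof is correct and takes essentially the same approach as the paper: conditional Jensen's inequality applied to the convex Wasserstein cost, exploiting that $\YYr$ is $\sigma(\VYr)$-measurable (the paper phrases this as applying Jensen to the bivariate convex map $(\hat x,\hat y)\mapsto\Wa(\hat x,\hat y)^p$ with $\E[\gamma]{(\XXr,\YYr)}[\VYr]=(\E[\gamma]{\XXr}[\VYr],\YYr)$, which is your fibrewise argument in one step). Your Step 1 and the caveat you flag about interpreting the measure-valued conditional expectation match the paper's accompanying remark, which justifies Jensen by integrating transport plans against the conditional law.
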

\begin{proof}
  By the conditional version of Jensen's inequality applied to the convex function $(\hat x, \hat y) \mapsto \W {\hat x} {\hat y} ^p$ we have
  \begin{align*}
    \Wa \pa{ \E[\gamma]{\XXr}[\VYr] , \YYr }^p = \Wa \pa{ \E[\gamma]{ (\XXr, \YYr) }[\VYr] }^p \leq \E[\gamma]{ \W \XXr \YYr ^p}[\VYr] \fullstop
  \end{align*}
\end{proof}

\begin{remark}
For the reader who may be sceptical of whether Jensen's inequality holds in this rather unusual setting, where we have a convex function
\begin{align*}
  \Wa : \Pr {\Xs_2} \times \Pr {\Ys_2} \rightarrow \R_+
\end{align*}
and conditional expectations on spaces of measures we remark that for the Wasserstein distance in particular this is very easy to check. The proof is just integrating transport plans between $\XXr$ and $\YYr$ w.r.t.\ the distribution of these conditioned on $\VYr$ (in this case) to get transport plans between $\E[\gamma]{\XXr}[\VYr]$ and $\E[\gamma]{\YYr}[\VYr]$.
\end{remark}

\begin{lemma}
  \label{lem:WcTriangle}
  Let $\mu, \nu, \lambda \in \Pr {\Xs_1 \times \Pr {\Xs_2}}$. Then
  \begin{align*}
    \WWc \mu \lambda \leq \WWc \mu \nu + \WWc \nu \lambda \fullstop
  \end{align*}
\end{lemma}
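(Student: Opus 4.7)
The plan is to prove the triangle inequality by the classical \emph{gluing} argument, adapted to handle the conditional-expectation piece in the definition of $\WWcau$. First I would fix $\epsilon > 0$ and pick couplings $\gamma_1 \in \Cpl \mu \nu$ and $\gamma_2 \in \Cpl \nu \lambda$ that are $\epsilon$-optimal for $\WWc \mu \nu$ and $\WWc \nu \lambda$ respectively. I would then glue them along their common marginal $\nu$ to produce a measure $\eta$ on $(\Xs_1 \times \Pr{\Xs_2}) \times (\Ys_1 \times \Pr{\Ys_2}) \times (\Zs_1 \times \Pr{\Zs_2})$ whose $(X,Y)$- and $(Y,Z)$-marginals equal $\gamma_1$ and $\gamma_2$; the candidate coupling from $\mu$ to $\lambda$ is then the $(X,Z)$-marginal of $\eta$.

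The key structural input is that the gluing makes $(\Xr_1, \XXr)$ and $(\Zr_1, \ZZr)$ conditionally independent given $\VYr = (\Yr_1, \YYr)$. Combined with the tower property and with $\E[\eta]{\XXr}[\VYr] = \E[\gamma_1]{\XXr}[\VYr]$ (which holds because the $(X,Y)$-marginal of $\eta$ is $\gamma_1$), this yields the identity
$$\E[\eta]{\XXr}[\VZr] = \E[\eta]{\E[\gamma_1]{\XXr}[\VYr]}[\VZr],$$
and likewise $\E[\eta]{\YYr}[\VZr] = \E[\gamma_2]{\YYr}[\VZr]$.

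With these identities in hand, applying the standard Wasserstein triangle inequality $\eta$-almost surely (interpolating through the random measure $\E[\eta]{\YYr}[\VZr]$) and the metric triangle inequality for $\D$ yields pointwise upper bounds on $\D(\Xr_1, \Zr_1)$ and $\Wa(\E[\eta]{\XXr}[\VZr], \ZZr)$. Combining these via Minkowski in $\ell^p_2$ and then passing to expectations via Minkowski in $L^p(\eta)$ reduces the problem to bounding the $\eta$-expectation of $\Wa(\E[\eta]{\E[\gamma_1]{\XXr}[\VYr]}[\VZr], \E[\gamma_2]{\YYr}[\VZr])^p$ by $\E[\gamma_1]{\Wa(\E[\gamma_1]{\XXr}[\VYr], \YYr)^p}$. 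This bound follows from Jensen's inequality for the conditional expectation given $\VZr$, applied to the jointly convex functional $(\xi, \eta') \mapsto \Wa(\xi, \eta')^p$ on pairs of probability measures, together with the fact that the unconditional $\eta$-expectation of a $\sigma(\VYr)$-measurable quantity is an integral against $\gamma_1$. Putting everything together yields $\WWc \mu \lambda \leq \WWc \mu \nu + \WWc \nu \lambda + 2\epsilon$, and letting $\epsilon \downarrow 0$ finishes the proof. The main obstacle I expect is the bookkeeping: keeping track of which conditional expectations live on which product space, and in particular verifying that the gluing's conditional independence really does propagate through the tower property to the displayed identity; once this setup is secure, the convex-analytic and $L^p$-type steps are routine.
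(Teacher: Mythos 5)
Your proposal is correct and follows essentially the same route as the paper's proof: glue the two near-optimal couplings along $\nu$, exploit the conditional independence of the $X$-block from $\VZr$ given $\VYr$ together with the tower property to replace $\E{\XXr}[\VYr,\VZr]$ by $\E{\XXr}[\VYr]$, apply conditional Jensen to $\Wa^p$, and finish with the $\D$, $\Wa$, $L^p$, and $\ell^p$ triangle inequalities. The only differences are cosmetic (the order in which Jensen and conditional independence are invoked, and the order of the two Minkowski steps); the conceptual content matches.
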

\begin{proof}
  Using our naming convention we have $$\mu \in \Pr {\Xs_1 \times \Pr {\Xs_2}},\quad \nu \in \Pr {\Ys_1 \times \Pr {\Ys_2}},\quad \lambda \in \Pr {\Zs_1 \times \Pr {\Zs_2}} \fullstop$$
  We denote the projections onto $\Pr {\Xs_2}$, $\Pr {\Ys_2}$, $\Pr {\Zs_2}$ by $\XXr$, $\YYr$, $\ZZr$ respectively. $\VYr = (\Yr_1,\YYr)$, $\VZr := (\Zr_1, \ZZr)$.

  Let $\gamma \in \Cpl \mu \nu$ and $\eta \in \Cpl \nu \lambda$. In the following let $\ExpS$ refer to (conditional) expectation w.r.t.\ $\kappa := \gamma \dpro {\Ys_1, \Pr {\Ys_2}} \eta$, and let $\Lpnorm{\cdot}$ refer to the $\Lp$-norm w.r.t.\ $\kappa$.

  Combining the triangle inequalities for $\D$, $\Wa$ and the $\Lpnorm \cdot$ we get
  \begin{align}
    \label{eq:LpD}
    \Lpnorm{\D(\Xr_1,\Zr_1)} & \leq \Lpnorm{\D(\Xr_1,\Yr_1)} + \Lpnorm{\D(\Yr_1,\Zr_1)} \\
    \label{eq:LpW'}
    \Lpnorm{\W {\E{\XXr}[\VZr]} {\ZZr}} & \leq \Lpnorm{\Wa \pa{\E{(\XXr,\YYr)}[\VZr]}} + \Lpnorm{\W {\E{\YYr}[\VZr]} {\ZZr}}
  \end{align}

  By the conditional Jensen inequality
  \begin{align*}
    \Wa \pa{\E{(\XXr,\YYr)}[\VZr]}^p & =
    \Wa \pa<\bigg>{ \E<\Big>{\E<\big>{(\XXr,\YYr)}[\VYr,\VZr]}[\VZr] }^p \\ & \leq \E<\bigg>{ \Wa \pa<\Big> {\E<\big>{(\XXr,\YYr)}[\VYr,\VZr]}^p }[\VZr]
  \end{align*}
  and therefore
  \begin{align*}
    \Lpnorm{ \Wa \pa{ \E{(\XXr,\YYr)}[\VZr]} }^p \leq \Lpnorm{ \Wa \pa{ \E{(\XXr,\YYr)}[\VYr,\VZr] }^p } \fullstop
  \end{align*}
  By construction, $(\XXr,\YYr)$ is conditionally independent from $\VZr$ given $\VYr$, so that $\E{(\XXr,\YYr)}[\VYr,\VZr] = \E{(\XXr,\YYr)}[\VYr]$ (this basic fact about conditional independence can be found for example as Theorem 45 in \cite{DeMeA}).
  Combining this with \eqref{eq:LpW'} gives
  \begin{align}
    \label{eq:LpW}
    \Lpnorm{\W {\E{\XXr}[\VZr]} {\ZZr}} & \leq \Lpnorm{\W {\E{\XXr}[\VYr]} {\YYr} } + \Lpnorm{\W {\E{\YYr}[\VZr]} {\ZZr}} \fullstop
  \end{align}
  Putting together \eqref{eq:LpD} and \eqref{eq:LpW} with the triangle inequality for $\ellp$ we get
  \begin{align*}
    \Wc \mu \lambda & = \pa{ \Lpnorm{ \D(\Xr_1,\Zr_1) }^p + \Lpnorm{ \W {\E{\XXr}[\VZr]} {\ZZr} }^p }^{1/p} \\
     & \leq \pa{ \Lpnorm{ \D(\Xr_1,\Yr_1) }^p + \Lpnorm{ \W {\E{\XXr}[\VYr]} {\YYr} }^p }^{1/p} \\
     & \qquad + \pa{ \Lpnorm{ \D(\Yr_1,\Zr_1) }^p + \Lpnorm{ \W {\E{\YYr}[\VZr]} {\ZZr} }^p }^{1/p} \\
     & = \Wc \mu \nu + \Wc \nu \lambda \fullstop
  \end{align*}
\end{proof}

\begin{lemma}
  $\WWcau$ is uniformly continuous w.r.t.\ $\WWbc$ on $\Pr {\Xs_1 \times \Pr {\Xs_2}}^2$.
\end{lemma}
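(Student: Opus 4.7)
The plan is to derive uniform continuity of $\WWcau$ (in fact, Lipschitz continuity with constant $1$ in the sum metric) directly from the two results we have already established, namely the triangle inequality for $\WWcau$ (Lemma \ref{lem:WcTriangle}) and the domination $\WWcau \leq \WWbc$ (Lemma \ref{lem:WcleqWb}). The only point to watch out for is that $\WWcau$ is not symmetric, so care has to be taken when iterating the triangle inequality.

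Concretely, given $\mu_1, \mu_2 \in \Pr {\Xs_1 \times \Pr{\Xs_2}}$ and $\nu_1, \nu_2 \in \Pr {\Ys_1 \times \Pr{\Ys_2}}$, I would apply Lemma \ref{lem:WcTriangle} twice to obtain
\begin{align*}
  \WWc{\mu_1}{\nu_1} \leq \WWc{\mu_1}{\mu_2} + \WWc{\mu_2}{\nu_2} + \WWc{\nu_2}{\nu_1},
\end{align*}
and symmetrically with the roles of the indices $1$ and $2$ swapped. Combining both yields
\begin{align*}
  \abs{\WWc{\mu_1}{\nu_1} - \WWc{\mu_2}{\nu_2}} \leq \max\pa{\WWc{\mu_1}{\mu_2},\WWc{\mu_2}{\mu_1}} + \max\pa{\WWc{\nu_1}{\nu_2},\WWc{\nu_2}{\nu_1}}.
\end{align*}

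Now Lemma \ref{lem:WcleqWb} gives $\WWc{\mu_1}{\mu_2} \leq \WWb{\mu_1}{\mu_2}$ and $\WWc{\mu_2}{\mu_1} \leq \WWb{\mu_2}{\mu_1}$. Since $\WWbc$ \emph{is} symmetric (unlike $\WWcau$), both bounds coincide, and similarly for $\nu_1, \nu_2$. Substituting gives the Lipschitz-type estimate
\begin{align*}
  \abs{\WWc{\mu_1}{\nu_1} - \WWc{\mu_2}{\nu_2}} \leq \WWb{\mu_1}{\mu_2} + \WWb{\nu_1}{\nu_2},
\end{align*}
from which uniform continuity of $\WWcau$ on $\pa{\Pr{\Xs_1 \times \Pr{\Xs_2}}}^2$ with respect to $\WWbc$ is immediate.

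There is essentially no hard step here: once one correctly tracks the (lack of) symmetry of $\WWcau$ and recalls the two prior lemmas, the estimate falls out mechanically. The only minor subtlety is to remember that one needs Lemma \ref{lem:WcleqWb} in \emph{both} directions (which is why the symmetry of $\WWbc$ is used), rather than just in one.
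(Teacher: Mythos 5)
Your proof is correct and essentially identical to the paper's: both iterate the triangle inequality for $\WWcau$ (Lemma \ref{lem:WcTriangle}) twice, swap roles to obtain a two-sided bound with $\max$, and then invoke $\WWcau \leq \WWbc$ (Lemma \ref{lem:WcleqWb}) together with the symmetry of $\WWbc$ to conclude. No substantive differences.
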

\begin{proof}
  Let $\mu, \nu, \mu', \nu' \in \Pr {\Xs_1 \times \Pr {\Xs_2}}$. We repeatedly use Lemma \ref{lem:WcTriangle}:
  \begin{align*}
    \WWc \mu \nu \leq \WWc {\mu} {\nu'} + \WWc {\nu'} {\nu} \leq \WWc {\mu} {\mu'} + \WWc {\mu'} {\nu'} + \WWc {\nu'} {\nu}
  \end{align*}
  therefore
  \begin{align*}
    \WWc \mu \nu - \WWc {\mu'} {\nu'} \leq \WWc {\mu} {\mu'} + \WWc {\nu'} {\nu} \fullstop
  \end{align*}
  Switching the roles of $(\mu,\nu)$ and $(\mu',\nu')$ implies
  \begin{align*}
    \lvert \WWc \mu \nu & - \WWc {\mu'} {\nu'} \rvert \\
     & \leq \max\pa{ \WWc {\mu} {\mu'} , \WWc {\mu'} {\mu} } + \max\pa{ \WWc {\nu} {\nu'} , \WWc {\nu'} {\nu} } \\
     & \leq \WWb {\mu} {\mu'} + \WWb {\nu} {\nu'} \fullstop
  \end{align*}
\end{proof}

\begin{lemma}
  The infimum in \eqref{eq:WcLifted} is attained.
\end{lemma}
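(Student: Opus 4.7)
We apply the direct method: since $\Cpl{\mu}{\nu}$ is weakly compact by Prokhorov (and $\Wp$-compact in Convention 2 via the finite $p$-th moments of $\mu$ and $\nu$), it suffices to show that the functional
\begin{align*}
F(\gamma) \,:=\, \E[\gamma]{\D(\Xr_1, \Yr_1)^p + \Wa\pa{ \E[\gamma]{\XXr}[\VYr], \YYr }^p}
\end{align*}
is weakly lower semi-continuous on this set. The term $\gamma \mapsto \E[\gamma]{\D(\Xr_1, \Yr_1)^p}$ is lsc by the Portmanteau theorem applied to the non-negative lsc integrand $\D^p$, so only the second summand requires genuine work. The difficulty is that $\gamma$ enters both through the outer expectation and through the conditional expectation inside $\Wa$, and taking conditional expectations is not a continuous operation in general.

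To disentangle this, write $\mathcal V := \Ys_1 \times \Pr{\Ys_2}$ and introduce the auxiliary measure $\alpha_\gamma \in \Pr{\Xs_2 \times \mathcal V}$ defined by
\begin{align*}
\textstyle \int f \d\alpha_\gamma \,:=\, \int f(x_2, y_1, \hat y) \d\hat x(x_2) \d\gamma(\noarg, \hat x, y_1, \hat y) \fullstop
\end{align*}
Up to a reordering of factors, $\alpha_\gamma$ is obtained from $\gamma$ by first projecting onto $\mathcal V \times \Pr{\Xs_2}$ and then applying $\und{\mathcal V}{\Xs_2}$, so $\gamma \mapsto \alpha_\gamma$ is continuous. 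By construction $\marg{\alpha_\gamma}{\mathcal V} = \nu$, and the $\mathcal V$-disintegration of $\alpha_\gamma$ at $(y_1, \hat y)$ equals $\E[\gamma]{\XXr}[\VYr=(y_1, \hat y)]$. Letting $\nu^\sharp := \und{\mathcal V}{\Ys_2}(\nu) \in \Pr{\mathcal V \times \Ys_2}$ be the measure whose $\mathcal V$-disintegration at $(y_1,\hat y)$ is $\hat y$, a standard measurable selection argument identifies the second summand of $F(\gamma)$ with
\begin{align*}
G(\alpha_\gamma) \,:=\, \inf \set<\big>{\textstyle \int \D(x_2, y_2)^p \d\pi}[\pi \in \Pr{\Xs_2 \times \mathcal V \times \Ys_2},\, \marg{\pi}{\Xs_2 \times \mathcal V} = \alpha_\gamma,\, \marg{\pi}{\mathcal V \times \Ys_2} = \nu^\sharp] \fullstop
\end{align*}

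Lower semi-continuity of $G$ is then a routine compactness-plus-Fatou argument on the set of fiber-preserving couplings: given $\alpha_n \to \alpha$ (each with $\mathcal V$-marginal $\nu$), choose optimal $\pi_n$ for $G(\alpha_n)$; the family $\{\pi_n\}_n$ is tight because all three of its marginals are (either fixed or convergent), so any weak cluster point $\pi^*$ satisfies the marginal constraints for $\alpha$ (these are closed), giving $G(\alpha) \leq \int \D^p \d\pi^* \leq \liminf_k G(\alpha_{n_k})$ by lsc of $\D^p \geq 0$. Composing with the continuous map $\gamma \mapsto \alpha_\gamma$ yields lsc of the second summand, hence of $F$, and compactness then furnishes a minimizer. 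The main obstacle is identifying the reformulation of the second summand of $F$ as $G(\alpha_\gamma)$ with a continuously varying argument; once this bookkeeping is in place, lsc of $G$ is entirely standard.
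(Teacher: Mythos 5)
Your proof is correct, and it takes a genuinely different route from the one in the paper. The paper switches to the two-time-point adapted Wasserstein topology: after applying $\diso{\Ys_1 \times \Pr{\Ys_2}}$, the objective becomes a \emph{continuous} function, but the image $\diso{\Ys_1 \times \Pr{\Ys_2}}(\Cpl{\mu}{\nu})$ is no longer compact in that finer topology, so the paper minimizes over a compact superset (measures whose $\Ys_1\times\Pr{\Ys_2}$-marginal is $\nu$ and whose integrated second marginal is $\mu$) and then projects the minimizer back to the constraint set using a barycentre map and the conditional Jensen inequality. You instead stay on $\Cpl{\mu}{\nu}$ with its original (weak or $\Wp$) topology, where compactness is immediate, and you supply the missing lower semi-continuity of the objective directly: the crucial step is your reformulation of $\E[\gamma]{\Wa\pa{\E[\gamma]{\XXr}[\VYr],\YYr}^p}$ as a minimal transport cost $G(\alpha_\gamma)$ in which $\gamma$ enters only through the \emph{continuously} varying marginal $\alpha_\gamma$ (obtained from $\gamma$ by a projection followed by $\undis$), while the other marginal $\nu^\sharp$ is fixed. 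Lower semi-continuity of $G$ is then the standard tightness-plus-Fatou argument. One small phrasing point: the infimum defining $G(\alpha_n)$ is not \emph{a priori} attained, so take $\pi_n$ near-optimal rather than optimal (this does not affect the limit argument). Both proofs ultimately exploit the same structural fact -- the conditional-expectation-plus-Wasserstein term is a weak-transport-type cost -- but the paper makes this visible by changing the topology and invoking compactness in the nested space, whereas your argument is more elementary and self-contained, trading the adapted-topology machinery and Jensen step for a bit of explicit bookkeeping in the disintegration identity and the lsc proof of $G$.
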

\begin{proof}
  This is an application of \cite[Theorem 1.2]{BaBePa18}.

  For self-containedness and because it's a nice application of the nested distance, we also sketch the argument here.
  We know that $\Cpl \mu \nu$ is compact. The problem is that $\gamma \mapsto \E[\gamma]{\W {\E[\gamma]{\XXr}[\VYr]} {\VYr} ^p}$ is not (lower semi-) continuous. But we may switch to a topology which is better \emph{adapted} to the problem at hand. Namely the two-timepoint $\AWp$-topology.
  In this case the space for the first timepoint is $\Ys_1 \times \Pr{\Ys_2}$ and that for the second is $\Xs_1 \times \Pr{\Xs_2}$. In effect that means that instead of $\gamma \in \Cpl \mu \nu$ we are now looking at $\gamma' \in \FunP { \Ys_1 \times \Pr {\Ys_2}} {\Pr{ \Xs_1 \times \Pr {\Xs_2}}}$. The function that we are optimizing over can be written as
  \begin{align*}
    \hat C := \gamma' &\mapsto \E[\gamma']{C(\Yr_1,\YYr,\VXXr)}
    \intertext{where}
    C(y_1, \hat y_2, \xi) &= \int \D(x_1,y_1) \d\xi(x_1,\noarg) + \W { \bary(\marg \xi {\Pr {\Xs_2}})} {\hat y_2} \\
    \bary(\lambda) & = \int x \d\lambda(x)
  \end{align*}
  $C$ is a continuous function and so is $\hat C$. Now  $\diso{\Ys_1 \times \Pr{\Ys_2}}\pa{\Cpl \mu \nu}$ is not compact, but 
  \begin{multline*}
    \set<\Big>{\gamma' \in \Pr { \Ys_1 \times \Pr {\Ys_2} \times \Pr{ \Xs_1 \times \Pr {\Xs_2}}}}[ \\ \marg {\gamma'} {\Ys_1 \times \Pr {\Ys_2}} = \nu \, , \quad \marg{\undo {\Ys_1 \times \Pr {\Ys_2}} (\gamma')}{\Pr{ \Xs_1 \times \Pr {\Xs_2}}} = \mu]
  \end{multline*}
  is. So we can find a minimizer $\gamma'$ of $\hat C$ in this set. To return to $\Cpl \mu \nu$, or more precisely $\diso{\Ys_1 \times \Pr{\Ys_2}}\pa{\Cpl \mu \nu}$, we can send $\gamma'$ to the distribution $\gamma''$ of $(\Yr_1,\YYr,\E[\gamma']{\VXXr}[\VYr])$. Because $C$ is continuous and convex in its last argument and by (the conditional version of) Jensens inequality (which could again be proved \enquote*{by hand} here) $\hat C(\gamma'') \leq \hat C(\gamma')$. $\undo { \Ys_1 \times \Pr {\Ys_2}} (\gamma'')$ is the sought after minimizer of \eqref{eq:WcLifted}.
\end{proof}

\begin{lemma}
  \label{lem:Hausdorff}
  Let $\mu, \nu \in \Pr {\Xs_1 \times \Pr {\Xs_2}}$.
  Then $\WWc \mu \nu = \WWc \nu \mu = 0$ implies $\mu = \nu$.
\end{lemma}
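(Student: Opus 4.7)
The plan is to invoke the attainment result just proved to obtain couplings $\gamma \in \Cpl{\mu}{\nu}$ and $\gamma' \in \Cpl{\nu}{\mu}$ at which the two vanishing distances are realised. Since $\D$ and $\Wa$ are genuine metrics, vanishing of the $\gamma$-integrand in \eqref{eq:WcLifted} forces
\begin{align*}
  \Xr_1 = \Yr_1 \text{ $\gamma$-a.s.}\qquad\text{and}\qquad \E[\gamma]{\XXr}[\VYr] = \YYr \text{ $\gamma$-a.s.,}
\end{align*}
together with the completely symmetric identities under $\gamma'$ (with the roles of $\mu$ and $\nu$ exchanged). In particular $\marg{\mu}{\Xs_1} = \marg{\nu}{\Xs_1}$.

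The core of the argument exploits these two ``martingale-type'' identities jointly. Fix $g \in C_b(\Xs_1)$, $f \in C_b(\Xs_2)$ and a continuous convex $\phi : \R \to \R$. Because $\eta \mapsto \tsint f\d \eta$ is affine on $\Pr{\Xs_2}$, the measure-valued identity passes through it to $\E[\gamma]{\tsint f \d \XXr}[\VYr] = \tsint f \d \YYr$, and combined with $\Xr_1 = \Yr_1$ this makes
\begin{align*}
  \pa<\big>{g(\Yr_1)\tsint f \d \YYr,\ g(\Xr_1)\tsint f \d \XXr}
\end{align*}
a two-step martingale under $\gamma$. Conditional Jensen (bounded summands, so no integrability snag) yields $\E[\nu]{\phi\pa{g \cdot \tsint f \d \XXr}} \leq \E[\mu]{\phi\pa{g \cdot \tsint f \d \XXr}}$; the symmetric argument under $\gamma'$ gives the reverse inequality, and hence equality. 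By linearity of conditional expectation the same martingale property, and therefore the same equality, remains true with $g(\Xr_1)\tsint f \d \XXr$ replaced by any finite linear combination $F := \sum_i a_i g_i(\Xr_1)\tsint f_i \d \XXr$. Varying $\phi$ over the call-option family $t \mapsto (t - c)^+$ and using that the map $c \mapsto \E{(F-c)^+}$ characterises the distribution of a bounded real random variable, one deduces $\CLaw[\mu]{F} = \CLaw[\nu]{F}$. Letting the coefficients $a_i$ vary and invoking the Cram\'er--Wold device, the joint law of $\pa{g_i(\Xr_1)\tsint f_i \d \XXr}_{i=1}^{n}$ under $\mu$ coincides with that under $\nu$.

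Selecting some indices with $f_i \equiv 1$ (so that $\tsint f_i \d \XXr = 1$) and others with $g_i \equiv 1$ separates first- and second-coordinate tests and yields equality of the joint law of $\pa<\big>{g_1(\Xr_1),\dots,g_k(\Xr_1),\tsint h_1 \d \XXr,\dots,\tsint h_m \d \XXr}$ under $\mu$ and $\nu$ for arbitrary $g_j \in C_b(\Xs_1)$ and $h_l \in C_b(\Xs_2)$. Since the Borel $\sigma$-algebra on $\Xs_1$ is generated by $C_b(\Xs_1)$ and the Borel $\sigma$-algebra on $\Pr{\Xs_2}$ (weak topology) by the evaluation maps $\eta \mapsto \tsint f \d \eta$ for $f \in C_b(\Xs_2)$, a standard monotone class/$\pi$-system argument identifies $\mu$ with $\nu$. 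The main delicacy is the joint use of both $\gamma$ and $\gamma'$: a single coupling alone only produces the one-sided convex-order domination coming from Jensen's inequality, and only the two in concert supply the two-sided equality needed to feed the Cram\'er--Wold step.
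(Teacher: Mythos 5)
Your proof is correct, but it takes a genuinely different route from the paper's. The paper glues the two optimal couplings $\gamma \in \Cpl \mu \nu$ and $\eta \in \Cpl \nu \mu$ into one probability measure $\gamma \dpro{\Ys_1 \times \Pr{\Ys_2}} \eta$, uses the conditional-independence structure of $\dpro$ to exhibit a \emph{three-step} measure-valued martingale $\ZZr, \YYr, \XXr$ whose endpoints $\ZZr$ and $\XXr$ are both distributed like $\marg{\mu}{\Pr{\Xs_2}}$, and then reduces to scalar martingales $\int f \d \ZZr, \int f \d \YYr, \int f \d \XXr$. Lemma~\ref{lem:constMartingale} (strict Jensen for the function $x \mapsto x^2$) then gives a.s.\ equality $\XXr = \YYr = \ZZr$ on the glued space, which immediately yields $\mu = \nu$ as laws of $\vec X$ and $\vec Y$. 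You instead keep the two couplings separate, extract from each a \emph{two-step} scalar martingale, and apply (non-strict) conditional Jensen once in each direction to obtain a two-sided convex-order domination $\E[\mu]{\phi(F)} = \E[\nu]{\phi(F)}$ for a rich family of test functionals $F$; you then pass from convex-order equality to equal laws of $F$ via call options, and from equal one-dimensional laws to $\mu = \nu$ via Cram\'er--Wold and a $\pi$-system argument. The trade-off: your approach avoids the $\dpro$-gluing (and the need to verify its conditional-independence property and Lemma~\ref{lem:EEExpectation}-style bookkeeping), relying only on Jensen in each direction; the paper's approach avoids the Cram\'er--Wold and monotone-class closing steps because the strict-convexity argument on the glued space delivers a.s.\ equality of the random variables directly. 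Both proofs share the essential reduction of the measure-valued martingale to real-valued ones via integration against bounded $f$, and both rely on the attainment lemma proved just before the statement.
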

\begin{proof}
  Call
  \begin{align*}
    \XXs & := \Xs_1 \times \Pr {\Xs_2} &
    \YYs & := \Ys_1 \times \Pr {\Ys_2} &
    \ZZs & := \Zs_1 \times \Pr {\Zs_2} \fullstop
  \end{align*}
  To have labels for our spaces, see $\mu, \nu$ as $$\mu \in \Pr {\XXs}\,, \quad\nu \in \Pr {\YYs}\,, \quad\mu \in \Pr {\ZZs} \fullstop $$

  Let $\gamma \in \Cpl \mu \nu \subseteq \Pr { \XXs \times \YYs }$ s.t.\ $\E[\gamma]{\D(\Xr_1, \Yr_1)^p} + \E[\gamma]{\W {\E[\gamma]{\XXr}[\VYr]} {\YYr} ^p} = 0$.

  Let $\eta \in \Cpl \nu \mu \subseteq \Pr { \YYs \times \ZZs} $ s.t.\ $\E[\eta]{\D(\Yr_1, \Zr_1)^p} + \E[\eta]{\W {\E[\eta]{\YYr}[\VZr]} {\ZZr} ^p} = 0$.

  \dontsmash
  All the following considerations happen under $\displaystyle \gamma \dpro { \YYs } \eta$.\dosmash{}
  Clearly, $\Zr_1 = \Yr_1 = \Xr_1$ a.s.

  Moreover, because $\E {\XXr}[\VYr,\VZr] = \E {\XXr}[\VYr]$, the random variables $\ZZr, \YYr, \XXr$ form a martingale w.r.t.\ the filtration generated by $\VZr, \VYr, \vec X$. The distribution of $\ZZr$ is equal to the distribution of $\XXr$. Both of these statements are also true if we integrate some bounded measurable function w.r.t.\ our random variables, i.e.\ for any bounded measurable $f : \Xs_2 \rightarrow \R $ we have that $\int f \d\ZZr, \int f \d\YYr, \int f \d\XXr$ is a martingale and that the distribution of $\int f \d\ZZr$ is equal to the distribution of $\int f \d\XXr$. But this means that we must have $\int f \d\ZZr = \int f \d\YYr = \int f \d\XXr$ a.s.\ (Lemma \ref{lem:constMartingale} below). As this is true for all $f$ from a countable generator of the sigma-algebra on $\Xs_2$, we have $\ZZr = \YYr = \XXr$ a.s.
\end{proof}
\begin{lemma}
  \label{lem:constMartingale}
  Let $X_1, X_2, X_3$ be a bounded martingale over $\R$. If the distribution of $X_1$ is equal to the distribution of $X_3$ then $X_1 = X_2 = X_3$ a.s.
\end{lemma}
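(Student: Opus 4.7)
The plan is to reduce everything to an $L^2$-computation, exploiting the fact that a martingale is the best $L^2$-approximation of its final value by a previous sigma-algebra. Boundedness is used only to ensure $X_1,X_2,X_3 \in L^2$ so that all the inner products below are finite.

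First I would show $X_1 = X_3$ a.s. By the tower property and the martingale property,
\[
\E{X_3 \mid X_1} = \E{\E{X_3 \mid X_1, X_2} \mid X_1} = \E{X_2 \mid X_1} = X_1.
\]
Hence $\E{X_1 X_3} = \E{X_1 \E{X_3 \mid X_1}} = \E{X_1^2}$. Since $X_1$ and $X_3$ are equal in distribution and bounded, $\E{X_3^2} = \E{X_1^2}$. Therefore
\[
\E{(X_3 - X_1)^2} = \E{X_3^2} - 2\E{X_1 X_3} + \E{X_1^2} = 0,
\]
so $X_3 = X_1$ almost surely.

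Next I would show $X_2 = X_1$ a.s. Using that $X_1 = \E{X_2 \mid X_1}$ and $X_2 = \E{X_3 \mid X_1, X_2}$, the conditional Jensen inequality (for the convex function $x \mapsto x^2$) gives
\[
\E{X_1^2} \leq \E{X_2^2} \leq \E{X_3^2} = \E{X_1^2},
\]
so all three $L^2$-norms coincide. Combined with $\E{X_1 X_2} = \E{X_1 \E{X_2 \mid X_1}} = \E{X_1^2}$, this yields
\[
\E{(X_2 - X_1)^2} = \E{X_2^2} - 2\E{X_1 X_2} + \E{X_1^2} = 0,
\]
so $X_2 = X_1$ almost surely, completing the proof.

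There is no real obstacle: the only subtlety is being explicit about which $\sigma$-algebra one conditions on (the martingale being indexed by $X_1$ and then by $(X_1, X_2)$), and one must remember to invoke boundedness to place the $X_i$ in $L^2$ so that the inner product computations above are legitimate.
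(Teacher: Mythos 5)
Your proof is correct and is essentially the same as the paper's: the paper appeals to the strict form of Jensen's inequality for the strictly convex function $x \mapsto x^2$, and your explicit $L^2$ computation (showing $\E{(X_3-X_1)^2}=0$ etc.) is precisely that argument unpacked, since the Jensen gap for $x \mapsto x^2$ is exactly the quantity you compute.
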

\begin{proof}
  This is a consequence of the strict version of Jensen's inequality applied to any everywhere strictly convex function. (Take for example $x \mapsto x^2$.)
\end{proof}

\begin{remark}
    The reason we took the detour of turning our probability-measure-valued martingale into a family of martingales on $\R$ and arguing on these is because this way we avoid having to exhibit a continuous, everywhere strictly convex function on $\Pr {\Xs_2}$.
\end{remark}

As a reminder:
\begin{definition}
  For $\mu, \nu \in \Pr {\Xs_1 \times \Pr {\Xs_2}}$,
  \begin{align*}
    \WWca \mu \nu := \max(\WWc \mu \nu , \WWc \nu \mu) \fullstop
  \end{align*}
\end{definition}

\begin{theorem}
  \label{thm:Wca}
  $\WWcac$ is a metric on $\Pr {\Xs_1 \times \Pr {\Xs_2}}$ satisfying $$\WWca \mu \nu \leq \WWb \mu \nu \fullstop$$
\end{theorem}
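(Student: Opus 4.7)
The plan is to simply assemble the ingredients from the preceding lemmas. All the real work has already been done; what remains is bookkeeping.

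First I would verify the four defining properties of a metric for $\WWcac$. Non-negativity is immediate from the definition \eqref{eq:WcLifted}. Symmetry holds by construction since $\WWca{\mu}{\nu} = \max(\WWc{\mu}{\nu},\WWc{\nu}{\mu})$. For $\WWca{\mu}{\mu} = 0$, one uses the diagonal coupling $\gamma = \push{(\id{}\funcomma \id{})}(\mu) \in \Cpl{\mu}{\mu}$, under which $\D(\Xr_1,\Yr_1) = 0$ and $\XXr = \YYr$ a.s.\ (so in particular $\E[\gamma]{\XXr}[\VYr] = \YYr$ a.s.), making both terms in \eqref{eq:WcLifted} vanish. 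Conversely, $\WWca{\mu}{\nu} = 0$ forces $\WWc{\mu}{\nu} = \WWc{\nu}{\mu} = 0$, and Lemma \ref{lem:Hausdorff} then gives $\mu = \nu$. The triangle inequality for $\WWcac$ follows from Lemma \ref{lem:WcTriangle} by taking the maximum over the two directions:
\begin{align*}
\WWca{\mu}{\lambda} & = \max\pa{\WWc{\mu}{\lambda},\WWc{\lambda}{\mu}} \\
  & \leq \max\pa{\WWc{\mu}{\nu} + \WWc{\nu}{\lambda},\WWc{\lambda}{\nu} + \WWc{\nu}{\mu}} \\
  & \leq \WWca{\mu}{\nu} + \WWca{\nu}{\lambda}\fullstop
\end{align*}

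It remains to show $\WWca{\mu}{\nu} \leq \WWb{\mu}{\nu}$. Lemma \ref{lem:WcleqWb} gives $\WWc{\mu}{\nu} \leq \WWb{\mu}{\nu}$, and the same lemma applied with the roles of $\mu$ and $\nu$ interchanged yields $\WWc{\nu}{\mu} \leq \WWb{\nu}{\mu}$. Since the functional $\gamma \mapsto \E[\gamma]{\D(\Xr_1,\Yr_1)^p + \Wa(\XXr,\YYr)^p}$ is symmetric under the swap $(\Xr_1,\XXr) \leftrightarrow (\Yr_1,\YYr)$, and this swap is a bijection between $\Cpl{\mu}{\nu}$ and $\Cpl{\nu}{\mu}$, we have $\WWb{\mu}{\nu} = \WWb{\nu}{\mu}$. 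Taking the maximum gives $\WWca{\mu}{\nu} \leq \WWb{\mu}{\nu}$.

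There is no genuine obstacle here; the only point that deserves a moment of care is verifying $\WWca{\mu}{\mu} = 0$ (to avoid mistakenly concluding merely that $\WWcac$ is a pseudometric), but the diagonal coupling handles this immediately.
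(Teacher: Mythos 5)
Your proof is correct and takes the same approach as the paper: the paper's proof is literally the one sentence ``This follows from Lemma \ref{lem:WcTriangle}, Lemma \ref{lem:Hausdorff} and Lemma \ref{lem:WcleqWb},'' and you have simply spelled out the bookkeeping. One small slip in your closing remark: the property that distinguishes a metric from a pseudometric is definiteness, i.e.\ $\WWca \mu \nu = 0 \Rightarrow \mu = \nu$ (this is exactly the content of Lemma \ref{lem:Hausdorff}, and is what the paper flags as the nontrivial point); the property $\WWca \mu \mu = 0$, which you singled out, is required of any pseudometric too and in fact follows for free from $\WWca \mu \nu \leq \WWb \mu \nu$, so the diagonal-coupling argument is unnecessary.
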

\begin{proof}
  This follows from Lemma \ref{lem:WcTriangle}, Lemma \ref{lem:Hausdorff} and Lemma \ref{lem:WcleqWb}.
\end{proof}

\begin{remark}
  As outlined at the beginning of this section, {\color{black}and thanks to Theorem \ref{thm:Wca},}  we now know enough to conclude that the topology induced by $\Wcac$ is equal to the topology induced by $\Wbc$, {\color{black}in case both $\Xs_1$ and $\Xs_2$ were compact.}
  The non-compact case is not much harder. {\color{black}We now proceed to settle this case: For this} we need the following lemma.
\end{remark}

\begin{lemma}
  \label{lem:contraction}
  The map
  \begin{align*}
    \undo {\Xs_1} : \Pr {\Xs_1 \times \Pr {\Xs_2}} \rightarrow \Pr {\Xs_1 \times \Xs_2}
  \end{align*}
  is a contraction when we equip the source space with $\WWcac$ and the target space with $\Wa$.
  More specifically for $\mu, \nu \in \Pr {\Xs_1 \times \Pr {\Xs_2}}$
  \begin{align}
    \label{eq:contraction}
    \W {\undo {\Xs_1} (\mu)} {\undo {\Xs_1} (\nu)} \leq \WWc \mu \nu \fullstop
  \end{align}
\end{lemma}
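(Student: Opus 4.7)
The plan is to produce, from any (near-)optimal $\gamma\in\Cpl\mu\nu$ for $\WWc\mu\nu$, a transport plan $\pi\in\Cpl{\undo{\Xs_1}(\mu)}{\undo{\Xs_1}(\nu)}$ whose cost matches the integrand defining $\WWc\mu\nu$. Since the infimum in \eqref{eq:WcLifted} is attained (as proved just above the lemma), we may fix an optimal $\gamma$ and aim directly at equality in the bound, rather than chasing an $\epsilon$. Write $K(y_1,\hat y):=\E[\gamma]{\XXr}[\VYr=(y_1,\hat y)]\in\Pr{\Xs_2}$ for a regular version of the conditional expectation, and invoke a standard measurable selection theorem for optimal transport to pick a measurable family $(y_1,\hat y)\mapsto \lambda_{y_1,\hat y}\in\Cpl{K(y_1,\hat y)}{\hat y}$ with $\int \D(x_2,y_2)^p\,d\lambda_{y_1,\hat y}(x_2,y_2)=\Wa(K(y_1,\hat y),\hat y)^p$.

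The construction of $\pi$ is then a two-step enlargement. On $\Xs_1\times\Pr{\Xs_2}\times\Xs_2\times\Ys_1\times\Pr{\Ys_2}\times\Ys_2$ consider the joint law specified by:
\begin{enumerate}
  \item $(\Xr_1,\XXr,\Yr_1,\YYr)\sim\gamma$;
  \item given the above, sample $X_2\sim\XXr$ (this ensures the $(\Xr_1,X_2)$-marginal is $\undo{\Xs_1}(\mu)$);
  \item given $(X_2,\VYr)$, sample $Y_2$ from the conditional kernel $\lambda_{\VYr}(\cdot\mid X_2)$ obtained by disintegrating $\lambda_{\VYr}$ along its first coordinate.
\end{enumerate}
Project onto $\Xs_1\times\Xs_2\times\Ys_1\times\Ys_2$ to obtain $\pi$.

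The verification has two pieces. For the marginals: the $(\Xr_1,X_2)$-marginal is $\undo{\Xs_1}(\mu)$ by step (2). For $(\Yr_1,Y_2)$, I compute that the conditional law of $X_2$ given $\VYr$ is $\E[\gamma]{\XXr}[\VYr]=K(\VYr)$ (a short calculation using step (2) and the tower property), so $(X_2,Y_2)\mid\VYr\sim\lambda_{\VYr}$, whose second marginal is $\YYr$; hence $Y_2\mid\VYr\sim\YYr$, which gives $(\Yr_1,Y_2)\sim\undo{\Xs_1}(\nu)$. For the cost: the $(X_1,Y_1)$-contribution is $\E[\gamma]{\D(\Xr_1,\Yr_1)^p}$, and conditioning on $\VYr$ yields $\E[\pi]{\D(X_2,Y_2)^p\mid\VYr}=\Wa(K(\VYr),\YYr)^p$, so the total cost is exactly $\E[\gamma]{\D(\Xr_1,\Yr_1)^p+\Wa(\E[\gamma]{\XXr}[\VYr],\YYr)^p}=\WWc\mu\nu^p$. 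Since $\pi\in\Cpl{\undo{\Xs_1}(\mu)}{\undo{\Xs_1}(\nu)}$, inequality \eqref{eq:contraction} follows.

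The only delicate points are the existence of the measurable selection $\VYr\mapsto\lambda_{\VYr}$ (standard, e.g.\ via Kuratowski--Ryll-Nardzewski applied to the multifunction of optimal couplings) and the bookkeeping to confirm that conditioning $X_2$ on $\VYr$ yields precisely $K(\VYr)$, both of which are routine; no convexity trick is needed since we are no longer trying to compare $\XXr$ with $\YYr$ directly but rather use the averaged $K(\VYr)$ that already appears in the definition of $\WWc$.
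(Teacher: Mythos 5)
Your proof is correct and shares the key idea with the paper's proof---sample $X_2$ from $\XXr$ first, so that the $(X_1,X_2)$-marginal is automatically $\undo{\Xs_1}(\mu)$, and then couple $X_2$ to $Y_2$ via a (near-)optimal transport plan between $K(\VYr)=\E[\gamma]{\XXr}[\VYr]$ and $\YYr$---but you assemble the joint law differently, and the difference is substantive. The paper defines two partial laws $\gamma_L$ on $\Xs_1 \times \Xs_2 \times \Ys_1$ (sampling $X_2 \sim \XXr$) and $\gamma_R$ on $\Xs_2 \times \Ys_1 \times \Ys_2$ (sampling $(X_2,Y_2)\sim\Xi$), then glues them via $\dpro{\Xs_2,\Ys_1}$; this forces a consistency check that the $(X_2,Y_1)$-marginals of $\gamma_L$ and $\gamma_R$ coincide, which is exactly what Lemma~\ref{lem:EEExpectation} is for. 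You instead build a single measure on the six-coordinate space by a Markov-kernel chain and project down; consistency is then automatic, and the only substantive verification is that the conditional law of $X_2$ given $\VYr$ equals $K(\VYr)$, which follows from a direct tower-property computation. So your route entirely bypasses Lemma~\ref{lem:EEExpectation}. The trade-offs are minor: your kernel in the third step requires both a measurable selector for the optimal couplings $\lambda_{\VYr}$ and a version of the disintegration $\lambda_{\VYr}(\cdot\mid x_2)$ that is jointly measurable in $(\VYr,x_2)$, whereas the paper needs only a single measurable selector $\Xi$; and you invoke attainment of the infimum, while the paper works with an $\epsilon$-optimal $\Xi$ and arbitrary $\gamma$ and therefore does not rely on attainment. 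Both approaches are valid, and the kernel-chain organization is arguably cleaner.
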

\begin{proof}
  We prove the second statement.
  Let $\mu \in \Pr \XXs$, $\nu \in \Pr \YYs$.
  Given $\gamma \in \Cpl \mu \nu$ and $\epsilon > 0$ the task is to find $\gamma' \in \Cpl {\undo {\Xs_1} \mu} {\undo {\Ys_1} \nu}$ s.t.\
  \begin{align}
    \label{eq:contrtarget}
    \E[\gamma']{\D(\Xr_1,\Yr_1)^p + \D(\Xr_2,\Yr_2)^p} \leq \E[\gamma]{\D(\Xr_1, \Yr_1)^p} + \E[\gamma]{\W {\E[\gamma]{\XXr}[\VYr]} {\YYr} ^p} + \epsilon \fullstop
  \end{align}

  We take inspiration from the discussion at the beginning of this section.
  Let $\Xi : \XXs \times \YYs \rightarrow \Pr {\Xs_2 \times \Ys_2}$ be a measurable selector satisfying
  \begin{align*}
    & \Xi \in \Cpl {\E[\gamma]{\XXr}[\VYr]} {\YYr} \quad \gamma\text{-a.s.\ and} \\
    & \E[\Xi]{\D(\Xr_2,\Yr_2)^p} \leq \W {\E[\gamma]{\XXr}[\VYr]} {\YYr}^p + \epsilon \quad \gamma\text{-a.s.}
  \end{align*}
  The obvious choice for $\gamma'$, namely $f \mapsto \E[\gamma]{\E[\Xi]{f(X_1,X_2,Y_1,Y_2)}}$ will not work because in general it gets the relationship between $X_1$ and $X_2$ wrong, i.e.\ its first marginal may not be $\undo {\Xs_1} (\mu)$.
  Instead we again define $\gamma_L \in \Pr {\Xs_1 \times \Xs_2 \times \Ys_1}$ and $\gamma_R \in \Pr {\Xs_2 \times \Ys_1 \times \Ys_2}$ and set $\gamma' := \gamma_L \dpro {\Xs_2, \Ys_1} \gamma_R$.
  \begin{align*}
    \gamma_L & := f \mapsto \E<\big>[\gamma]{\E[\XXr]{f(\Xr_1,\Xr_2,\Yr_1)}} \\
    \gamma_R & := f \mapsto \E[\gamma]{\E[\Xi]{f(\Xr_2,\Yr_1,\Yr_2)}}
  \end{align*}
  Clearly, if we can actually define $\gamma'$ as announced, then \eqref{eq:contrtarget} will hold, because then
  \begin{align*}
    \E[\gamma']{\D(\Xr_1,\Yr_1)} & = \E<\big>[\gamma]{\E[\XXr]{\D(\Xr_1,\Yr_1)}} = \E[\gamma]{\D(\Xr_1,\Yr_1)} \\
    \E[\gamma']{\D(\Xr_2,\Yr_2)} & = \E[\gamma]{\E[\Xi]{\D(\Xr_2,\Yr_2)}} \leq \E[\gamma]{\W {\E[\gamma]{\XXr}[\VYr]} {\YYr} ^p} + \epsilon \fullstop
  \end{align*}
  It remains to check that $\gamma_L$ and $\gamma_R$ can actually be composed, i.e.\ that $(\Xr_2,\Yr_1)$ has the same distribution under $\gamma_L$ and $\gamma_R$.
  \begin{multline*}
    \E[\gamma_R]{h(\Xr_2,\Yr_1)} = \E[\gamma]{\E[\Xi]{h(\Xr_2,\Yr_1)}} = \E[\gamma]{\E[{ \E[\gamma]{\XXr}[\VYr] }]{h(\Xr_2,\Yr_1)}} = \\ \E[\gamma]{\E<\Big>[\gamma]{ \E[\XXr]{h(\Xr_2,\Yr_1)} }[\VYr] } = \E[\gamma]{ \E[\XXr]{h(\Xr_2,\Yr_1)} } = \E[\gamma_L]{h(\Xr_2,\Yr_1)}
  \end{multline*}
  The step in the middle has its own Lemma \ref{lem:EEExpectation} below.
\end{proof}

\begin{lemma}
  \label{lem:EEExpectation}
  Let $\P$ be a probability on $\Pr \Xs \times \Ys$, for Polish spaces $\Xs, \Ys$. Let $h : \Xs \times \Ys \rightarrow \R$ be a measurable function. Then
  \begin{align*}
    \E [{ \E{\smash{\hat X}}[Y] }] { h(X,Y) } = \E { \E[\hat X] { h(X,Y) } }[Y] \quad \P\text{-a.s.,}
  \end{align*}
  where $\mathbb E$ without superscript is the (conditional) expectiation w.r.t.\ $\P$ and $\hat X$ is the projection onto $\Pr \Xs$.
\end{lemma}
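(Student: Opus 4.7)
The plan is to give meaning to $\E{\hat X}[Y]$ as a random probability measure, verify the identity on product indicators, and extend by a monotone class argument.

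First I would invoke the existence of regular conditional distributions on Polish spaces to fix a measurable map $y \mapsto \P_y$ such that $\P_y \in \Pr{\Pr \Xs}$ is (a version of) the conditional law of $\hat X$ given $Y=y$ under $\P$. I would then interpret $\E{\hat X}[Y]$ as the random probability measure on $\Xs$ defined $\P$-a.s.\ via the barycenter $A \mapsto \int_{\Pr \Xs} \mu(A) \,\d\P_Y(\mu)$. Equivalently, and this is the only property needed in the rest of the proof, it is the $\P$-a.s.\ unique random probability measure satisfying
\[
  \tsint f(x) \,\d\pa<\big>{\E{\hat X}[Y]}(x) \;=\; \E{\tsint f \d\hat X}[Y] \quad \P\text{-a.s.}
\]
for every bounded measurable $f:\Xs \to \R$.

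Next I would verify the identity for $h(x,y) = 1_A(x)\, 1_B(y)$ with $A \subseteq \Xs$ and $B \subseteq \Ys$ measurable. Both sides reduce to $1_B(Y) \cdot \E{\hat X(A)}[Y]$: on the left because integrating $1_A$ against the random measure $\E{\hat X}[Y]$ yields $\E{\hat X(A)}[Y]$ by the characterizing property above, and on the right because $\int 1_A(x)\,\d\hat X(x) = \hat X(A)$, so that $\E[\hat X]{1_A(X) 1_B(Y)} = 1_B(Y)\hat X(A)$, after which the $\sigma(Y)$-measurable factor $1_B(Y)$ pulls out of $\E{\cdot}[Y]$.

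Finally, the collection of bounded measurable $h: \Xs \times \Ys \to \R$ for which the identity holds $\P$-a.s.\ is a vector space closed under bounded monotone pointwise limits and contains the indicators of measurable rectangles, which form a $\pi$-system generating $\mathcal B(\Xs) \otimes \mathcal B(\Ys)$. By the monotone class theorem the identity holds for all bounded measurable $h$, and a truncation argument extends it to any measurable $h$ for which both sides are well-defined. The only substantive step is the first paragraph, namely realizing $\E{\hat X}[Y]$ as a well-defined random probability measure; this is standard given Polishness of $\Pr \Xs$, and the rest is bookkeeping.
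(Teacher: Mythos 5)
Your proof is correct, and it takes a genuinely different route from the paper's. The paper works with test functions $h(x,y)=f(x)g_1(y)$ and establishes the a.s.\ equality of two $Y$-measurable random variables by multiplying both by an arbitrary $g_2(Y)$ and taking (unconditional) expectation; the key step is applying the continuous linear functional $\gamma \mapsto \E[\gamma]{f(X)}$ to both sides of the Bochner-type identity $\E{\E{\hat X}[Y]\, g(Y)} = \E{\hat X\, g(Y)}$, so that integration against $f$ is commuted past the conditional expectation of the measure-valued variable $\hat X$. Your argument isolates exactly the same commutation fact, but frames it as the defining property of the barycenter $\E{\hat X}[Y]$, then verifies the identity directly on rectangle indicators $1_A(x)1_B(y)$ and invokes the monotone class theorem. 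What your approach buys is explicitness: you spell out in what sense $\E{\hat X}[Y]$ is a random probability measure and you carry out the extension from product-form $h$ to general measurable $h$, which the paper leaves implicit (even though it needs the lemma for general $h$ in the proof of Lemma \ref{lem:contraction}). What the paper's approach buys is brevity, at the cost of leaning on the reader to accept the Bochner conditional expectation of a $\Pr{\Xs}$-valued random variable and the duality step without further comment.
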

 Note that $X$ is on both sides introduced by the expectation operator which carries a superscript, while $Y$ may on both sides be interpreted as coming from the outermost context. On the right hand side $Y$ may also be seen as having been introduced by the outermost conditional expectation operator. (As this operator conditions on $Y$ this is the same thing.)
\begin{proof}
  Both sides are clearly $Y$-measurable.
  We prove that for $h(x,y) = f(x) g_1(y)$, multiplying by $g_2(Y)$ and taking expectation gives the same result.
  By definition of the conditional expectation
  \begin{align*}
    \E { \E{\smash{\hat X}}[Y] g(Y) } = \E { \hat X g(Y) } \fullstop
  \end{align*}
  Applying the continuous linear function $\gamma \mapsto \E[\gamma]{f(X)}$ this gives
  \begin{align*}
    \E { \E[{\E{\smash{\hat X}}[Y]}]{f(X)} g(Y) } = \E { \E[\smash{\hat X}]{f(X)} g(Y) } \fullstop
  \end{align*}

  Again by the definition of the conditional expectation:
  \begin{multline*}
    \E {\E { \E[\hat X] { f(X)g_1(Y) } }[Y] g_2(Y)} = 
    \E { \E[\hat X] { f(X)g_1(Y) } g_2(Y)} =  \\ \E { \E[\hat X] { f(X) } g_1(Y) g_2(Y) } = \E { \E[{\E{\smash{\hat X}}[Y]}]{f(X)} g_1(Y) g_2(Y) } = \\ \E { \E[{\E{\smash{\hat X}}[Y]}]{f(X) g_1(Y)} g_2(Y) }
  \end{multline*}
  where for the third equality we plugged in the previous equation.
\end{proof}

{%
\begin{proof}[Alternative proof of Lemma \ref{lem:contraction} when $\Xs_1$ has no isolated points]
\dosmash%
  When the space $\Xs_1$ has no isolated points one can show that the space $\FunP {\Xs_1} {\Pr {\Xs_2}}$ is dense in $\Pr {\Xs_1 \times \Pr {\Xs_2}}$. This allows for a shorter proof of Lemma \ref{lem:contraction}:

  By the original definition \eqref{eq:WcDef} of $\Wcau$ on the space $\Pr {\Xs_1 \times \Xs_2}$ the inequality \eqref{eq:contraction} holds on $\FunP {\Xs_1} {\Pr {\Xs_2}} \times \FunP {\Xs_1} {\Pr {\Xs_2}}$. Both $\Wcau$ and $(\mu,\nu) \mapsto \W {\undo {\Xs_1} (\mu)} {\undo {\Xs_1} (\nu)}$ are uniformly continuous on $\Pr {\XXs} \times \Pr {\XXs}$ w.r.t.\ some product metric of $\WWbc$ with itself. $\FunP {\Xs_1} {\Pr {\Xs_2}}$ is dense in $\Pr {\XXs}$, and therefore $\FunP {\Xs_1} {\Pr {\Xs_2}} \times \FunP {\Xs_1} {\Pr {\Xs_2}}$ is dense in $\Pr {\XXs} \times \Pr {\XXs}$. This implies that \eqref{eq:contraction} holds on all of $\Pr {\XXs} \times \Pr {\XXs}$.
\end{proof}
}%

\begin{theorem}
  \label{thm:CausalAntiEqBi2}
  The topology induced by $\WWcac$ on $\Pr { \Xs_1 \times \Pr {\Xs_2} }$ is equal to the toplogy induced by $\WWbc$ on that space.
\end{theorem}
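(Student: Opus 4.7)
The plan is to prove the two topological inclusions separately. The easy direction is immediate: by Theorem \ref{thm:Wca}, $\WWca{\mu}{\nu} \leq \WWb{\mu}{\nu}$, so $\WWbc$-convergence implies $\WWcac$-convergence. For the converse, I take a sequence $(\mu_n)_n$ with $\WWca{\mu_n}{\mu} \to 0$ and aim to show $\WWb{\mu_n}{\mu} \to 0$.

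Since both sides are genuine metrics, a standard subsequence argument reduces the task to (a) $\WWbc$-relative compactness of $\{\mu_n\}$ in $\Pr{\Xs_1 \times \Pr{\Xs_2}}$, together with (b) uniqueness of $\WWbc$-subsequential limits as $\mu$. Point (b) is automatic: a $\WWbc$-subsequential limit is also a $\WWcac$-limit by the easy direction, hence equals $\mu$ by Hausdorffness of $\WWcac$ (Theorem \ref{thm:Wca} via Lemma \ref{lem:Hausdorff}).

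For (a), Lemma \ref{lem:contraction} gives $\undo{\Xs_1}(\mu_n) \to \undo{\Xs_1}(\mu)$ in $\Wp$ on $\Xs_1 \times \Xs_2$, and the trivial bound $\WWc{\mu}{\nu} \geq \W{\push{\Xr_1}(\mu)}{\push{\Xr_1}(\nu)}$ further yields $\Wp$-convergence of the first marginals $\push{\Xr_1}(\mu_n)$ on $\Xs_1$ and of the averaged second marginals $\Pi(\mu_n) := \push{\Xr_2}(\undo{\Xs_1}(\mu_n))$ on $\Xs_2$. Tightness of $\{\push{\Xr_1}(\mu_n)\}$ on $\Xs_1$ is immediate. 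Tightness of the measure-valued second marginals $\{\push{\XXr}(\mu_n)\}$ in $(\Pr{\Xs_2}, \Wp)$ I would obtain by a combined Markov/Prokhorov/Jensen argument: for compacts $K_k \subseteq \Xs_2$ with $\sup_n \Pi(\mu_n)(K_k^c) < \varepsilon 4^{-k}$, Markov's inequality applied to $\xi \mapsto \xi(K_k^c)$ puts $\push{\XXr}(\mu_n)$-mass at least $1-\varepsilon$ on the Prokhorov-compact set $\{\xi : \xi(K_k^c) \leq 2^{-k}\ \forall k\}$; a parallel Jensen argument applied to the convex map $u \mapsto (u-T)^+$ upgrades uniform $p$-integrability of $\{\Pi(\mu_n)\}$ on $\Xs_2$ to uniform $p$-integrability of the $\Pr{\Xs_2}$-valued coordinate under $\{\mu_n\}$, delivering full tightness in the $\Wp$-topology. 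Finally, the $p$-moment of $\mu_n$ around a reference $(x_0, \delta_{x_0'}) \in \Xs_1 \times \Pr{\Xs_2}$ splits additively into the $p$-moments of $\push{\Xr_1}(\mu_n)$ around $x_0$ and of $\Pi(\mu_n)$ around $x_0'$, both of which converge.

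The principal obstacle I anticipate is the final upgrade to genuine $\Wp$-relative compactness of $\{\mu_n\}$, which beyond tightness needs uniform $p$-integrability of the metric on $\Xs_1 \times \Pr{\Xs_2}$: one must decompose $\int \indicator{\{\D > R\}} \D^p \,\d\mu_n$ via $\D^p = \D_{\Xs_1}^p + \D_{\Pr{\Xs_2}}^p$ and bound the cross terms, e.g.\ $\int \indicator{\{\D_{\Xs_1} > R'\}} \D_{\Pr{\Xs_2}}^p \,\d\mu_n \leq \int \indicator{\{\D > R'\}} \D^p \,\d\undo{\Xs_1}(\mu_n)$, which vanishes uniformly by the $\Wp$-convergence of $\undo{\Xs_1}(\mu_n)$. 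Conceptually the whole tightness/integrability package is a Wasserstein analogue of the compactness lemma (Lemma \ref{lem:CompactnessLemma}), and is the decisive place where the full strength of Lemma \ref{lem:contraction} enters beyond the bare a priori bound $\WWcac \leq \WWbc$.
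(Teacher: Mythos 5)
Your proposal follows the same two-step skeleton as the paper's proof: the easy inclusion from $\WWcac \leq \WWbc$ (Theorem \ref{thm:Wca}), and for the hard direction a subsequence argument keyed on $\WWbc$-relative compactness of $\{\mu_n\}$ together with Hausdorffness of $\WWcac$ (Lemma \ref{lem:Hausdorff}), with Lemma \ref{lem:contraction} used to push $\WWcac$-compactness down to $\Pr{\Xs_1 \times \Xs_2}$. The genuine divergence lies in how you lift relative compactness of $\undo{\Xs_1}[\{\mu_n\}]$ back up to $(\Pr{\Xs_1 \times \Pr{\Xs_2}}, \WWbc)$: the paper simply invokes Lemma \ref{lem:CompactnessLemma} (in its $\Wp$-form, via \cite[Lemma 3.3]{ModulusOfContinuity}), while you reprove the needed two-factor $\Wp$-instance from scratch via the Markov/Prokhorov lift for tightness and the Jensen upgrade for uniform $p$-integrability. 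You correctly flag this as a Wasserstein analogue of Lemma \ref{lem:CompactnessLemma}, so no idea is missing --- the paper's route is shorter because the work is outsourced to the companion paper, whereas yours is self-contained. Two places in your sketch would need to be made precise when written out: (i) one needs tightness of $\{\push{\XXr}(\mu_n)\}$ on $\Pr{\Xs_2}$ equipped with $\Wp$, not merely with the weak topology, so the $(u-T)^+$-Jensen estimate must feed a second Markov/Prokhorov-type construction producing a $\Wp$-compact exceptional set, rather than stopping at uniform $p$-integrability of $\xi \mapsto \Wp(\xi,\delta_{x_0'})$; (ii) the other cross term $\int \indicator{\{\D_{\Pr{\Xs_2}} > R\}}\,\D_{\Xs_1}^p\,\d\mu_n$ does not descend under $\undo{\Xs_1}$ as cleanly as the one you display and needs its own truncation-plus-Markov estimate. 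Neither is a gap in principle; both are exactly the sort of detail that \cite[Lemma 3.3]{ModulusOfContinuity} packages once and for all.
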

\begin{proof}
  As both topologies are metric and therefore first-countable we may argue on sequences.
  Let $(\mu_n)_n$ be a sequence in $\Pr { \Xs_1 \times \Pr {\Xs_2} }$. As $\WWca {\mu_n} \mu \leq \WWb {\mu_n} \mu$, if $(\mu_n)_n$ converges to $\mu$ w.r.t.\ $\WWbc$ it also converges to $\mu$ w.r.t.\ $\WWcac$.

  Now assume that a sequence $(\mu_n)_n$ in $\Pr { \Xs_1 \times \Pr {\Xs_2} }$ converges to $\mu$ w.r.t.\ $\WWcac$.
  We will show that every subsequence of $(\mu_n)_n$ has a subsequence which converges to $\mu$ w.r.t.\ $\WWbc$.
  {\color{black} 
  Note that convergence of $(\mu_n)_n$ w.r.t.\ $\WWcac$ implies that the set $K := \set{ \mu_n }[n \in \N]$ is relatively compact w.r.t.\ the topology induced by $\WWcac$.}
  As $\undo {\Xs_1}$ is continuous as a map from $\Pr { \Xs_1 \times \Pr {\Xs_2} }$ with the topology induced by $\WWcac$ to $\Pr {\Xs_1 \times \Xs_2}$ with the toplogy induced by $\Wa$ (Lemma \ref{lem:contraction}), we have that $\undo {\Xs_1} [K] = \set{ \undo {\Xs_1} (\mu_n) }[n \in \N]$ is also relatively compact.
  By Lemma \ref{lem:CompactnessLemma}/\cite[Lemma 3.3]{ModulusOfContinuity} this implies that $K$ is relatively compact in $\Pr {\Xs_1 \times \Pr {\Xs_2}}$ with the topology induced by $\WWbc$.
  Now let $(\mu_{n_k})_k$ be some subsequence of $(\mu_n)_n$.
  As $K$ is relatively compact we can find a subsequence $(\mu_{n_{k_j}})_j$ of $(\mu_{n_k})_k$, which converges w.r.t.\ $\WWbc$ to some $\mu' \in \Pr {\Xs_1 \times \Pr {\Xs_2} }$.
  As $\WWca {\mu_{n_{k_j}}} {\mu'} \leq \WWb {\mu_{n_{k_j}}} {\mu'}$ this sequence also converges to $\mu'$ w.r.t.\ $\WWcac$. But $(\mu_{n_{k_j}})_j$ also converges to $\mu$ w.r.t.\ $\WWcac$.
  Because the topology induced by $\WWcac$ is Hausdorff (Lemma \ref{lem:Hausdorff}), we must have $\mu' = \mu$, i.e.\ $(\mu_{n_{k_j}})_j$ converges to $\mu$ w.r.t.\ $\WWbc$.
\end{proof}

Now we return to the general case of $N$ time-points.

\begin{theorem}
  \label{thm:CausalAntiEqBiN}
  The topology induced by $\Wcac$ on $\Pr {\Xb}$ is equal to Hellwig's $\Wp$-information topology and to the topology induced by $\AWp$.
\end{theorem}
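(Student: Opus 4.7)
Since Section \ref{sec:arrows} already gives equality of Hellwig's $\Wp$-information topology with the topology induced by $\AWp$, it suffices to sandwich the $\Wcac$-topology between these two. One direction is immediate: every bicausal coupling is causal (in both orientations), so $\Wcac(\mu,\nu)\leq\AWp(\mu,\nu)$, whence the $\Wcac$-topology is coarser than the $\AWp$-topology on $\Pr\Xb$.

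For the reverse inclusion I would reduce the $N$-period statement to the two-period Theorem \ref{thm:CausalAntiEqBi2}. Fix $t\in\{1,\dots,N-1\}$ and split $\Xb=\Xb^t\times\Xb_{t+1}$. Write $\Wcac^{(t)}$ and $\AWA_p^{(t)}$ for the symmetrized-causal and bicausal Wasserstein distances built on this two-factor decomposition in the sense of Section \ref{sec:HellwigAdap}. Any causal coupling $\pi\in\cpla(\mu,\nu)$ in the $N$-period sense \eqref{def:adapted.coupling} in particular satisfies the single conditional independence that defines two-period causality at the split time $t$, and the same is true for bicausality. Hence $\Wcac^{(t)}(\mu,\nu)\leq\Wcac(\mu,\nu)$ for every $t$, so convergence in $\Wcac$ entails convergence in each $\Wcac^{(t)}$.

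Next, apply Theorem \ref{thm:CausalAntiEqBi2} with $\Xs_1:=\Xb^t$ and $\Xs_2:=\Xb_{t+1}$: $\WWcac$ and $\WWbc$ induce the same topology on $\Pr{\Xb^t\times\Pr{\Xb_{t+1}}}$. Pushing everything through the disintegration $\disint{\Xb^t}{\Xb_{t+1}}$, and using that by construction $\WWcac$ extends $\Wcac^{(t)}$ and $\WWbc$ extends $\AWA_p^{(t)}$ from $\Pr(\Xb^t\times\Xb_{t+1})$ to the corresponding space of lifted measures, we conclude that convergence in $\Wcac^{(t)}$ implies convergence in $\AWA_p^{(t)}$ for each $t$. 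Summing over $t$ yields convergence in $\IWA_p=\sum_{t=1}^{N-1}\AWA_p^{(t)}$, which by Section \ref{sec:HellwigAdap} is a compatible metric for Hellwig's $\Wp$-information topology. Combined with the first direction and Section \ref{sec:arrows}, all three topologies coincide.

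The main obstacle is not topological but notational: one must verify that $\Wcac^{(t)}$ and $\AWA_p^{(t)}$, defined on $\Pr(\Xb^t\times\Xb_{t+1})$, really coincide with the restrictions of $\WWcac$ and $\WWbc$ to the image of the disintegration map, so that Theorem \ref{thm:CausalAntiEqBi2} transfers back to $\Pr\Xb$. This is built into the construction of $\WWcac$ carried out earlier in this section, and for $\WWbc$ it is precisely Theorem \ref{thm:Nembedding} applied with $N=2$; everything else is bookkeeping.
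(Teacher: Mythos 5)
Your proof is correct and follows essentially the same route as the paper: one direction via $\Wcac\le\AWp$, the other by splitting $\Xb=\Xb^t\times\Xb_{t+1}$, observing that an $N$-period causal coupling is causal at the single split time $t$ (so $\Wcaa\le\Wcac$), and then invoking Theorem \ref{thm:CausalAntiEqBi2} on the lifted space $\Pr{\Xb^t\times\Pr{\Xb_{t+1}}}$ together with the fact that $\WWcac$ and $\WWbc$ restrict to $\Wcaa$ and $\AWA_p^{(t)}$, respectively, under the disintegration embedding.

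The one cosmetic difference is the last step: the paper concludes directly that each $\I_t=\disint{\Xb^t}{\Xb_{t+1}}$ is continuous from the $\Wcac$-topology to $\FunP{\Xb^t}{\Pr{\Xb_{t+1}}}$, which gives continuity into the $\Wp$-information topology by its definition as the initial topology of the family $(\I_t)_t$; you instead pass through the metric $\IWA_p=\sum_t\AWA_p^{(t)}$ of Section \ref{sec:HellwigAdap}. That makes your argument lean on the (unproved, but straightforward and acknowledged as such) claim that $\IWA_p$ metrizes Hellwig's topology, which is slightly more than you need: since $\AWA_p^{(t)}$-convergence is, by Theorem \ref{thm:Nembedding} in the two-period case, exactly $\Wp$-convergence of $\I_t(\mu_n)\to\I_t(\mu)$, you could conclude directly, without the sum, that all the $\I_t$ converge and hence that you have convergence in the $\Wp$-information topology. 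Substantively the two arguments are the same.
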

\begin{proof}
  As every bicausal transport plan between $\mu$ and $\nu$ can be interpreted as a causal transport plan from $\mu$ to $\nu$ and also as a causal transport plan from $\nu$ to $\mu$ we have that $\Wca \mu \nu \leq \Wbb (\mu, \nu)$. This means that the identity from $\Pr \Xb$ with the topology induced by $\AWp$ to $\Pr \Xb$ with the topology induced by $\Wcac$ is continuous.
  For the other direction we show that the identity from $\Pr \Xb$ with the topology induced by $\Wcac$ to $\Pr \Xb$ with the $\Wp$-information topology is continuous, i.e.\ we show that each of the maps
  \begin{align*}
    \disint {\Xb^t} {\Xb_{t+1}} = \I_t : \Pr \Xb \rightarrow \FunP {\Xb^t} {\Pr{ \Xb_{t+1} }} 
  \end{align*}
  is continuous when $\Pr \Xb$ gets the topology induced by $\Wcac$.
  
  If $\mu, \nu \in \Pr {\Xb}$ and $\gamma \in \Cpl \mu \nu$ is causal, then, in particular, $\gamma$ is \enquote*{causal at the timestep from $t$ to $t+1$}, i.e.\ $\gamma$ is causal when regarded as a coupling between $\mu, \nu \in \Pr {\Xb^t \times \Xb_{t+1}}$.
  This means that if we define $\Wcaa$ like $\Wcac$, but only require causality based on the decomposition of $\Xb$ as $\Xb^t \times \Xb_{t+1}$, then $\Wcaa (\mu,\nu) \leq \Wca \mu \nu$, i.e.\ the identity from $\Pr \Xb$ with the topology induced by $\Wcac$ to $\Pr \Xb$ with the topology induced by $\Wcaa$ is continuous. By Theorem \ref{thm:CausalAntiEqBi2} the map
  \begin{align*}
    \disint {\Xb^t} {\Xb_{t+1}} : \Pr {\Xb^t \times \Xb_{t+1}} \rightarrow \FunP {\Xb^t} {\Pr {\Xb_{t+1}}}
  \end{align*}
  is continuous when we equip $\Pr {\Xb^t \times \Xb_{t+1}}$ with the topology induced by $\Wcaa$. Now $\I_t$ is continuous as a composite of continuous maps.
\end{proof}

\section{Aldous' extended weak convergence}
\label{sec:Aldous}

In this section we show that Aldous extended $\Wp$-/weak topology is equal to Hellwig's ($\Wp$-)information topology.

We recall and paraphrase here the definition, already given in the introduction, of Aldous' topology.

\newcommand{\dmu}[1]{\mu_{#1}}
\newcommand{\x}[2][i]{(x_{#1})_{#1=1}^{#2}}
\newcommand{\dirac}[1]{\delta_{#1}}
\begin{definition}
  Given $\mu \in \Pr \Xb$ let $\dmu {\x j}$ be the value of a (classical) disintegration of $\mu$ w.r.t.\ the first $j$ coordinates at $\x j$. (By convention $\dmu {\x 0} = \mu$).
  Define
  \begin{align*}
    \Extw & : \Pr \Xb \rightarrow \Pr { \Xb \times \prod_{j=0}^N \Pr \Xb } \\
    \Extw(\mu) & := \push{ \x N \mapsto \pa{ \x N , \pa{ \dirac {\x j} \mprod \dmu {\x j} }_{j=0}^N } } (\mu) \fullstop
  \end{align*}
  The \emph{extended $\Wp-$/weak topology} on $\Pr \Xb$ is the initial topology w.r.t.\ $\Extw$.
\end{definition}
\begin{remark}
  Reasonable people may disagree about whether the most faithful / useful transcription of Aldous' definition should include the factors $j=0$ and $j=N$ in the above product of spaces. When including $j=N$, as we did, one has to interpret $\dirac {\x N} \mprod \dmu {\x N}$ simply as $\dirac {\x N}$. We leave it as an exercise to the reader to check that either or both may be dropped in the definition of $\Extw$ without affecting the resulting topology on $\Pr X$.
\end{remark}

\newcommand{\Alde}[1]{\mathcal A'_{#1}}
\newcommand{\Ald}{\mathcal A}
\begin{theorem}
  The ($\Wp$-)extended weak topology is equal to the ($\Wp$-)information topology.
\end{theorem}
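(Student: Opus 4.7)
My plan is to establish both topological containments by exhibiting, in each direction, a continuous factorization of the defining map of one topology through the defining map of the other.

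\medskip

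\noindent\textbf{Direction Hellwig $\le$ Aldous.} Since the $\Wp$-information topology is the initial topology with respect to the maps $\I_t$, it suffices to show each $\I_t$ is continuous when $\Pr{\Xb}$ carries the extended $\Wp$-topology. For this I would exhibit $\I_t$ as an explicit continuous pushforward of $\Extw(\mu)$. Concretely, let $p_t: \Xb \times \prod_{j=0}^N \Pr{\Xb} \to \Xb \times \Pr{\Xb}$ be the projection onto $\Xb$ and the $t$-th factor, and let $g_t: \Xb \times \Pr{\Xb} \to \Xb^t \times \Pr{\Xb_{t+1}}$ be given by $g_t(x,\nu) := ((x_1,\dots,x_t), \marg{\nu}{\Xb_{t+1}})$. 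Both are continuous (the latter because taking marginals is continuous), and a direct computation using the definition of $\Extw$ shows
\[
  \push{g_t \circ p_t}(\Extw(\mu)) = \I_t(\mu),
\]
which gives the required continuity.

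\medskip

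\noindent\textbf{Direction Aldous $\le$ Hellwig.} Here we need $\Extw$ to be continuous with respect to Hellwig's information topology. My plan is to exploit Section \ref{sec:arrows}, which identifies Hellwig's topology with the topology induced by $\Nmap$ (equivalently, with the $\AWp$-topology). I would then recursively construct a continuous map $G: \FFF \to \Pr{\Xb \times \prod_{j=0}^N \Pr{\Xb}}$ with $\Extw = G \circ \Nmap$, following the same bookkeeping spirit as the construction of the map $\K$ in Section \ref{sec:arrows}. The key continuous building blocks are the map $\und{\A}{\B}$ (which collapses nested disintegrations) and the map $(y,\nu) \mapsto \delta_y \otimes \nu$ (which packages a disintegration together with its conditioning point into an element of $\Pr{\Xb}$, i.e.\ a value of the prediction process). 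At each level $t$, the nested encoding $\Nmap^t(\mu)$ supplies the joint law of $(X_1,\dots,X_t,\mu_{(X_1,\dots,X_t)})$, from which one reads off $Z_t = \delta_{(X_1,\dots,X_t)} \otimes \mu_{(X_1,\dots,X_t)}$, while successive applications of $\und{}{}$ reconstruct the remaining coordinates and the later $Z_j$ jointly with $X$.

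\medskip

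\noindent\textbf{Main obstacle.} The harder direction is clearly the second one: we must produce the full joint law of $(X, Z_0, Z_1, \dots, Z_N)$, not merely its individual marginals, and as noted in Section \ref{sec:arrows} the disintegration operation itself is not continuous, so the continuity of $G$ must be arranged by the same \emph{commutativity of folding} phenomenon that underlies Lemma \ref{lem:undass}. An alternative that might circumvent much of this bookkeeping is a direct sequential argument: given $\mu_n \to \mu$ in $\AWp$, pick bicausal couplings $\pi_n$ with $\int \D^p \d\pi_n \to 0$; by the recursive Pflug--Pichler formula \eqref{valuefunctiongeneral}--\eqref{eq:Nested}, this forces the conditional laws $\mu_n^{X_1^n,\dots,X_t^n}$ and $\mu^{X_1,\dots,X_t}$ to become asymptotically $\Wp$-close under $\pi_n$ for every $t$, and then $Z_t^n \to Z_t$ jointly with $X^n \to X$, yielding the convergence $\Extw(\mu_n) \to \Extw(\mu)$. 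I would pursue whichever of the two approaches ends up cleaner once the technical details are written out.
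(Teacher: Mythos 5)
Your overall plan is sound and the easy direction (Hellwig $\le$ Aldous) matches the paper's argument exactly: the paper defines $\Alde{k}$ as precisely the pushforward under \enquote*{project onto $\Xb$ and the $k$-th prediction factor, keep $\x k$, take the $\Xb_{k+1}$-marginal of the measure}, which is your $g_t \circ p_t$.

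For the hard direction the paper takes a slightly different route than you propose. Rather than factoring $\Extw$ through $\Nmap$, the paper constructs a map $\Ald$ directly from the codomain of $\I$, building up the joint law of $(X^m, Z_0,\dots,Z_m)$ recursively by gluing on the $(m+1)$-th component via $\dpro{\Xb^m}$ and the map $(\x m,\rho)\mapsto(\x m, \dirac{\x m}\otimes\rho)$. The crucial continuity input there is \cite[Lemma 4.1]{ModulusOfContinuity}, stating that $\dpro{\Bs}$ is continuous when one argument lies in $\FunP{\Bs}{\Cs}$; this applies because the partial joint law of $(X^m,Z_0,\dots,Z_m)$ is concentrated on a graph over $\Xb^m$. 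Your proposed route through $\Nmap$ and $\und{}{}$ avoids that external lemma and is, after some thought, viable: one recursively carries along the joint law of $(X^m, Z_0,\dots,Z_m,\kappa_m)$ where $\kappa_m\in\Pr{\XP{m+1}}$ is the nested conditional object, and at each step one duplicates the $\kappa_m$ coordinate, applies $\und{}{}$ to peel off $(X_{m+1},\kappa_{m+1})$, and then pushes forward under the continuous $(\x{m+1},\kappa)\mapsto\dirac{\x{m+1}}\otimes\text{flatten}(\kappa)$ to produce $Z_{m+1}$. This uses only $\und{}{}$, pushforwards of continuous maps, and coordinate duplication, which is more elementary than the paper's gluing lemma.

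Two caveats worth flagging. First, the identity you assert, that $\Nmap^t(\mu)$ \emph{is} the joint law of $(X^t,\mu_{X^t})$, is literally true only for $t=N-1$; for $t<N-1$ the second component is the more deeply nested conditional object $\kappa_t$, and you need to compose with the continuous \enquote*{flatten} map (iterated $\und{}{}$) to extract $\mu_{X^t}$. Second, and more importantly, the step \enquote*{successive applications of $\und{}{}$ reconstruct the remaining coordinates and the later $Z_j$ jointly with $X$} elides the key device that makes this work without losing the joint structure, namely that one must duplicate the nested-measure coordinate \emph{before} applying $\und{}{}$, so that $\kappa_m$ remains available after $X_{m+1}$ and $\kappa_{m+1}$ have been materialized. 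Once that is made explicit the recursion closes; without it, the claim that $\und{}{}$ alone suffices is not self-evident. Your alternative sequential argument via nearly optimal bicausal couplings and the Pflug--Pichler recursion is plausible but would still need the link between smallness of $\int\D^p\,\d\pi_n$ under the recursion and $L^p$-closeness of the prediction processes, which is essentially the content of the modulus-of-continuity machinery from \cite{ModulusOfContinuity} and thus not obviously \enquote*{cleaner}.
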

\begin{proof}
  We construct continuous maps
  \begin{align*}
    \Alde k & : \Pr { \Xb \times \prod_{j=0}^N \Pr \Xb } \rightarrow \Pr { \Xb^k \times \Pr {\Xb_{k+1}} } \\
    \Ald & : \prod_{k=1}^{N-1} \FunP {\Xb^k} {\Pr{ \Xb_{k+1} }} \rightarrow \Pr { \Xb \times \prod_{j=0}^N \Pr \Xb } \\
  \end{align*}
  such that
  \begin{align*}
    \Alde k \circ \Extw & = \I_k \\
    \Ald \circ \I & = \Extw \fullstop
  \end{align*}
  The first equality above implies that the identity on $\Pr \Xb$ is continuous from the extended weak topology to the information topology, the second implies that it is continuous in the other direction.

  $\Alde k$ is very simple. We just need to select the right factors and then discard the unnecessary $\dirac {\x k}$ part of the measure component. Formally
  \begin{align*}
    \Alde k := \push{ \big( \x N , (\nu_j)_{j=0}^N \big) \mapsto \big( \x k , \marg {{\nu_{k}}} {\Xb_{k+1}} \big) } \text{ ,}
  \end{align*}
  which is cleary continuous.

  We construct $\Ald$ recursively, by constructing as a composite of continuous maps
  \begin{align*}
    \Ald^m & : \prod_{k=1}^{N-1} \FunP {\Xb^k} {\Pr{ \Xb_{k+1} }} \rightarrow \Pr { \Xb^m \times \prod_{j=0}^m \Pr \Xb }
  \end{align*}
  satisfying
  \begin{align}
    \label{eq:aldindinv}
    \Ald^m(\I(\mu)) = \push{ \x N \mapsto \big( \x m , ( \dirac{\x k} \mprod \dmu{\x k} )_{k=0}^m \big) }(\mu) \fullstop
  \end{align}
  $\Ald^0\pa{ (\nu_k)_{k=1}^{N-1} } := \dirac {\undo {\Xs_1} (\nu_1)}$. We need the helper functions
  \begin{align*}
    h_m & : \FunP {\Xb^m} {\Pr {\Xb_{m+1}}} \rightarrow \FunP {\Xb^m} {\Pr \Xb} \\
    h_m & := \push{ (\x m, \rho) \mapsto (\x m , \dirac{\x m} \mprod \rho) } \fullstop
  \end{align*}
  Given $\Ald^m$ satisfying the induction hypothesis we set
  \begin{align*}
    \Ald^{m+1}\pa{ (\nu_k)_{k=1}^{N-1} } := \push{ s_{m+1} } \pa{
      \Ald^m\pa{ (\nu_k)_{k=1}^{N-1} } \dpro{\Xb^m} h_{m+1}(\nu_{m+1})
    }
  \end{align*}
  where $s_{m+1}$ is the obvious permutation of the coordinates to get the factors into the right order.
  $\Ald^{m+1}$ is continuous because by \cite[Lemma 4.1]{ModulusOfContinuity} $\dpro{\Xb^m}$ is continuous when one of the arguments is an element of some $\FunP \Bs \Cs$. That \eqref{eq:aldindinv} still holds for $m+1$ is a straightforward calculation.
  This way we get to $\Ald^{N-1}$. Finally, set
  \begin{align*}
    \Ald\pa{ (\nu_k)_{k=1}^{N-1} } := \push{ s_N } \pa{ %
      \Ald^{N-1}\pa{ (\nu_k)_{k=1}^{N-1} } \dpro{\Xb^{N-1}} \diso{\Xs_1}(\nu_1) %
    } \\
    \intertext{where}
    s_N\pa{ \pa{\x {N-1}, (\rho_j)_{j=1}^{N-1}, x_N} } := \pa{ \x {N}, (\rho_j)_{j=1}^{N-1}, \dirac{(\x {N})}} \fullstop
  \end{align*}
\end{proof}

\section{Bounded vs unbounded metrics}
\label{sec:convergence.moments}

Because we will need it in the next section we interject here a proof of Lemma \ref{lem:convergence.moments}, which we restate below.

{\def\Omega{{\Xb}}
\convergencemoments*
}

\begin{proof}[Proof of Lemma \ref{lem:convergence.moments}]
  We provide the proof only for Hellwig's topology, i.e.\ \ref{it:Hellwig} of Theorem \ref{thm:main} and Theorem \ref{thm:main_simple}, respectively.
  As we have already seen in the previous sections, the topologies \ref{it:SCW}--\ref{it:Aldous} are equivalent topologies, and the result therefore carries over to them. The ($\Wp$-)optimal stopping topology, \ref{it:optstop}, is treated below.
  It is clear that convergence w.r.t.\ $\Wp$-information topology implies convergence in Hellwig'g information topology plus convergence of $p$-th moments.
  For the reverse implication, let $1\leq t\leq N-1$, and denote by $\mathcal{A}:=\overline{\mathcal{X}}^t$ the first $t$ and by $\mathcal{B}:=\overline{\mathcal{X}}_{t+1}$ the last $N-t$ coordinates.
  Now assume that $(\mu_n)_n$ converges to $\mu$ in Hellwig's information topology and that the $p$-th moments converge.
  The classical (not adapted) version of the very lemma we prove here implies that $\mu_n\to\mu$ in $\Wp$; in particular $K:=\{\mu_n: n\}\subset \Prp{\mathcal{A}\times\mathcal{B}}$ is relatively compact.
  Lemma \ref{lem:CompactnessLemma} (or really \cite[Lemma 3.3]{ModulusOfContinuity}/\cite[Lemma 2.6]{BaBePa18}) therefore guarantees that $\dis_\mathcal{A}^\mathcal{B} [K] \subset \Prp{\mathcal{A}\times\Prp{\mathcal{B}}}$ is relatively compact.

  Every subsequence of $(\disint \A \B (\mu_n))_n$ therefore has a subsequence $(\disint \A \B (\mu_{n_k}))_k$ which converges w.r.t.\ the topology on $\Prp {\A \times \Prp \B}$ (i.e.\ the one coming from nested Wasserstein metrics) to some $\mu' \in \Prp {\A \times \Prp \B}$. Because convergence in $\Prp {\A \times \Prp \B}$ is stronger than convergence in $\Pr {\A \times \Pr \B}$ (i.e.\ in the nested \emph{weak} sense) we must also have $\disint \A \B (\mu_{n_k}) \overset{k}{\to} \mu'$ in $\Pr {\A \times \Pr \B}$. But also, by assumption, $\disint \A \B (\mu_{n_k}) \overset{k}{\to} \disint \A \B (\mu)$ in $\Pr {\A \times \Pr \B}$ and therefore $\mu' = \disint \A \B (\mu)$.
\end{proof}

\section{Optimal Stopping}
\label{sec:OptStop}

In this section we investigate the relation between the ($\Wp$-)optimal stopping topology and the adapted Wasserstein topology.
Lemma \ref{lem:stopping.controlled.by.causal} states that the topology induced by $\AWp$ (\ref{it:AW} of Theorem \ref{thm:main}) is finer than the $\Wp$-optimal stopping topology.
Lemma \ref{lem:stopping.finer.than.information} states that the $\Wp$-optimal stopping topology is finer than the $\Wp$-information topology (\ref{it:Hellwig} of Theorem \ref{thm:main}).
This will finish the proof of Theorem \ref{thm:main}.

Recall that 
\begin{align*}
  v^L(\mu):=\inf\set{\E[\mu] {L_\tau(X))}: 0\leq \tau\leq N\mbox{ is a stopping time} }
\end{align*}
for $L=(L_t)_{t=0}^N\in AC_p(\Omega)$.

\begin{lemma}
\label{lem:stopping.controlled.by.causal}
  Let $L\in AC_p(\Omega)$.
  Then $\mu\mapsto v^L(\mu)$ is continuous w.r.t.\ $\AWp$.
  In fact, one has
  \begin{align}
  \label{eq bound opt stopp}
  |v^L(\mu)-v^L(\nu)|\leq \inf\set{ \E[\pi] { \max_{0\leq t\leq N} |L_t(X)-L_t(Y)| } : \,\, \pi\in\cplba(\mu,\nu) }.
  \end{align}
  for every $\mu,\nu\in\Prp{\Omega}$.
\end{lemma}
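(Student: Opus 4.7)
My plan is to first prove the quantitative bound \eqref{eq bound opt stopp}; continuity with respect to $\AWp$ will then follow by a standard uniform integrability argument. Throughout, write $\mathcal{F}^X_t := \sigma(X_1,\dots,X_t)$ and $\mathcal{F}^Y_t := \sigma(Y_1,\dots,Y_t)$ (as sub-$\sigma$-algebras of the product space). Fix $\pi \in \cplba(\mu,\nu)$, $\epsilon > 0$, and an $\epsilon$-optimal $\mathcal{F}^X$-stopping time $\tau$ for $\mu$, so that $\E[\mu]{L_\tau(X)} \leq v^L(\mu) + \epsilon$. The crux of the proof is to transport $\tau$ into a randomized $\mathcal{F}^Y$-stopping time $(q_t)_{t=0}^N$ whose expected reward equals $\E[\pi]{L_\tau(Y)}$; this step uses causality of $\pi$ from $\nu$ to $\mu$.

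Define $p_t := \pi(\tau \leq t \mid \mathcal{F}^Y_t)$ and $q_t := p_t - p_{t-1}$ (with $p_{-1}:=0$). Causality yields $\mathcal{F}^X_t \perp \mathcal{F}^Y_N \mid \mathcal{F}^Y_t$ under $\pi$, and since $\{\tau \leq t\} \in \mathcal{F}^X_t$ this gives $\pi(\tau \leq t \mid \mathcal{F}^Y_N) = p_t$ a.s. Consequently $(p_t)$ is a.s.\ nondecreasing, $q_t \geq 0$, $\sum_t q_t = 1$, and $q_t = \pi(\tau = t \mid \mathcal{F}^Y_N)$; in particular $(q_t)$ is a bona fide randomized $\mathcal{F}^Y$-stopping time. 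Since $L_t(Y)$ is $\mathcal{F}^Y_N$-measurable,
\begin{align*}
\sum_t \E[\pi]{q_t\,L_t(Y)} = \sum_t \E[\pi]{\indicator{\tau=t}\,L_t(Y)} = \E[\pi]{L_\tau(Y)}.
\end{align*}
Because randomized $\mathcal{F}^Y$-stopping times do not improve the optimal value of the linear problem (they can be realized as deterministic stopping times after enlarging $\Omega$ by an independent uniform), we obtain $v^L(\nu) \leq \E[\pi]{L_\tau(Y)}$, and hence
\begin{align*}
v^L(\nu) - v^L(\mu) \leq \E[\pi]{L_\tau(Y) - L_\tau(X)} + \epsilon \leq \E[\pi]{\max_{0\leq t\leq N}|L_t(X) - L_t(Y)|} + \epsilon.
\end{align*}
Exchanging the roles of $(\mu,X)$ and $(\nu,Y)$ uses causality of $\pi$ in the opposite direction (hence the hypothesis of bicausality) and yields the matching bound on $v^L(\mu) - v^L(\nu)$. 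Letting $\epsilon \downarrow 0$ and infimizing over $\pi \in \cplba(\mu,\nu)$ establishes \eqref{eq bound opt stopp}.

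For continuity, suppose $\mu_n \to \mu$ in $\AWp$ and select bicausal $\pi_n \in \cplba(\mu_n,\mu)$ with $\E[\pi_n]{\D(X,Y)^p} \to 0$. Then $\D(X,Y) \to 0$ in $\pi_n$-probability, and continuity of each $L_t$ gives $\max_t |L_t(X) - L_t(Y)| \to 0$ in $\pi_n$-probability. The growth bound $|L_t(\cdot)| \leq c(1 + \D(x_0,\cdot)^p)$, combined with $\Wp$-convergence of the marginals (implied by $\AWp$-convergence via $\AWp \geq \Wp$), supplies the required uniform integrability, so the right-hand side of \eqref{eq bound opt stopp} along $\pi_n$ tends to zero. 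Thus $v^L(\mu_n) \to v^L(\mu)$. The main, and really only, obstacle is the construction and verification of $(q_t)$ via the causal conditional-independence identity; everything else is bookkeeping.
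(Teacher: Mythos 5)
Your proof is correct and follows essentially the same route as the paper. The one cosmetic difference: you formulate the transported stopping rule directly as a randomized stopping time via the weights $q_t := p_t - p_{t-1}$, whereas the paper realizes exactly the same object through the uniform-threshold representation $\sigma(X,u) := \inf\{t : p_t \geq u\}$ and integrates over $u \in [0,1]$, which makes the de-randomization step (that you invoke as folklore) explicit. The key use of causality — turning $\pi(\tau \le t \mid \mathcal F^Y_N)$ into the adapted quantity $\pi(\tau \le t \mid \mathcal F^Y_t)$ so that the weights form a bona fide $Y$-adapted randomized stopping time — is identical in both, and the continuity argument via near-optimal bicausal couplings and the order-$p$ growth bound matches the paper's as well.
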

\begin{proof}
  Let $\mu,\nu\in\Prp{\Omega}$ and assume that $v^L(\mu)\leq v^L(\nu)$.
  Moreover, let $\pi\in\cplba(\mu,\nu)$ and $\varepsilon>0$ be arbitrary, and fix a stopping time $\tau$ satisfying $\E[\nu]{L_{\tau}(Y)}\leq v^L(\nu)+\varepsilon$.
  For $u\in[0,1]$ define
  \begin{align*}
  \sigma(X,u)
  &:=\inf\{ t\in\{0,\cdots,T\}: \pi(\tau(Y)\leq t| X)\geq u\} \\
  &\phantom{:}= \inf\{ t\in\{0,\cdots,T\} : \pi(\tau(Y)\leq t| X_1,\dots,X_t)\geq u\},
  \end{align*}
  where the equality holds by the properties of stopping times and since $\pi$ is causal.
  We then have that
  \begin{align*}
  \int_{[0,1]}\E[\pi]{ L_{\sigma(X,u)}(X)} \d u \
  &= \sum_{t=0}^T \int_{[0,1]}\E[\pi]{ L_t(X)1_{\pi(\tau(Y)\leq t| X)\geq u >\pi(\tau(Y)\leq t-1 | X) } } \d u \\
  & =\sum_{t=0}^T \E[\pi] {L_t(X)1_{\tau(Y)=t} }
  = \E[\pi] {L_{\tau(Y)}(X) }.
  \end{align*}
  As further $\sigma(\cdot,u)$ is a stopping time for every fixed $u\in[0,1]$ one has $v^L(\mu)\leq \int_{[0,1]}\E[\pi] {L_{\sigma(X,u)}(X) } \d u$ and therefore 
  \begin{align*}
  v^L(\mu)- v^L(\nu) 
  &\leq \E[\pi] {L_{\tau(Y)}(X) - L_{\tau(Y)}(Y) } +\varepsilon\\
  &\leq \E[\pi] { \max_{0\leq t\leq N} |L_t(X) - L_t(Y) | } +\varepsilon.
  \end{align*}
  Changing the role of $\mu$ and $\nu$ and using that $\varepsilon>0$ and $\pi\in\cplba(\mu,\nu)$ was arbitrary yields \eqref{eq bound opt stopp}.

  Now assume that
  $\AWp(\mu_n,\mu) \to 0$ 
  and that $\pi_n \in \Cpl {\mu_n} \mu$ is less than $1/n$ away from attaining the infimum $\AWp(\mu_n,\mu)$.
  Then $\Wp(\pi_n, \pi) \to 0$, where $\pi \in \Cpl \mu \mu$ is the identity coupling $\push {\id \Omega \funcomma \id \Omega} (\mu)$ of $\mu$. (A coupling between $\pi_n$ and $\pi$ is given by $\push {(x,y) \mapsto (x,y,y,y)}(\pi_n)$.) Because $(x,y) \mapsto \max_{0\leq t\leq N} |L_t(x)-L_t(y)|$ is a continuous function of growth of at most order $p$, we get that
  \begin{align*}
    \E[\pi_n] { \max_{0\leq t\leq N} |L_t(X)-L_t(Y)| } \to \E[\pi] { \max_{0\leq t\leq N} |L_t(X)-L_t(Y)| } = 0 \fullstop
  \end{align*}
  Together with \eqref{eq bound opt stopp} this implies that $v^L$ is continuous w.r.t.\ $\AWp$.
\end{proof}

\begin{remark}
  The above proof reveals that if $L_t$ is Lipschitz with constant $c>0$ for every $t$, then $|v^L(\mu)-v^L(\nu)|\leq c\, \SWA_1(\mu,\nu)$.
\end{remark}

In order to show that the optimal stopping topology is finer than the $\Wp$-information topology, we need to make a few preparations.

\begin{lemma}\label{lemv con det 1}
  Let $\mathcal{A}$ be a Polish space. 
  Then the family 
  \begin{align}
  \label{eq:convergence.set.1}
  \set{ \Pr{\mathcal{A}} \ni \mu \mapsto G\left(\int_\mathcal{A} h_1 \d\mu,\dots,\int_\mathcal{A} h_L \d\mu \right ) :
  \begin{array}{l} 
  L\in\mathbb N,G\in C_b(\mathbb R^L)\\
  (h_i)_{i\leq L}\subset C_b(\mathcal{A})
  \end{array}  }
  \end{align}
  is convergence determining for the weak topology on $\Pr{\Pr{\mathcal{A}}}$, that is, a sequence of probability measures $(\mu_n)_n$ in $\Pr{\Pr{\mathcal{A}}}$ converges weakly to a probability measure $\mu\in \Pr{\Pr{\mathcal{A}}}$ if and only if $\int F\d\mu_n\to\int F\d\mu$ for all $F$ in \eqref{eq:convergence.set.1}.
\end{lemma}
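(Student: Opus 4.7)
My plan is as follows. The ``only if'' direction is immediate: for each $h_i\in C_b(\mathcal{A})$ the map $\sigma\mapsto \int h_i\,d\sigma$ is continuous and bounded on $\Pr{\mathcal{A}}$, and composing with $G\in C_b(\mathbb{R}^L)$ preserves both properties, so every function in the family lies in $C_b(\Pr{\mathcal{A}})$; this yields convergence of the integrals whenever $\mu_n\to\mu$ weakly in $\Pr{\Pr{\mathcal{A}}}$.

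For the converse I would invoke the standard criterion (see e.g.\ Ethier--Kurtz, \emph{Markov Processes: Characterization and Convergence}, Theorem~3.4.5 and Proposition~3.4.6) which states that a subalgebra of $C_b(E)$ containing the constants and \emph{strongly separating} points of a Polish space $E$ is convergence determining on $\Pr{E}$. Applying this with $E=\Pr{\mathcal{A}}$ (which is Polish because $\mathcal{A}$ is), the algebra structure is plain: the constants arise when $G$ is chosen constant, and the pointwise product of two members of the family is another member, obtained by concatenating the lists of integrands and multiplying the outer functions.

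The key verification is strong separation of points. Fix $\sigma_0\in\Pr{\mathcal{A}}$ and an open neighbourhood $U$ of $\sigma_0$. Since the weak topology on $\Pr{\mathcal{A}}$ is the initial topology with respect to the maps $\sigma\mapsto\int h\,d\sigma$ for $h\in C_b(\mathcal{A})$, there exist $h_1,\dots,h_L\in C_b(\mathcal{A})$ and $\delta>0$ such that the basic neighbourhood $\{\sigma:|\int h_i\,d\sigma-\int h_i\,d\sigma_0|<\delta\text{ for all }i\leq L\}$ is contained in $U$. For each $i$ choose $G_i\in C_b(\mathbb{R})$ that agrees with the identity on $[-\|h_i\|_\infty,\|h_i\|_\infty]$; then $F_i(\sigma):=G_i(\int h_i\,d\sigma)=\int h_i\,d\sigma$ belongs to the family, and for every $\sigma\notin U$ at least one $i$ satisfies $|F_i(\sigma)-F_i(\sigma_0)|\geq\delta$. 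This is strong separation of $\sigma_0$ from $U^c$.

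The only mildly non-trivial step is the strong separation above, after which the cited theorem concludes. If one prefers a self-contained argument, the hard part becomes tightness of $(\mu_n)$, and the route I would take is: (i)~the barycenters $\bar\mu_n:=\int \sigma\,d\mu_n(\sigma)\in\Pr{\mathcal{A}}$ converge weakly to $\bar\mu$ on $\mathcal{A}$, by applying the hypothesis to the truncated-identity members $\sigma\mapsto G(\int h\,d\sigma)$ of the family for $h\in C_b(\mathcal{A})$; (ii)~Prokhorov plus Markov's inequality then transfer tightness of $(\bar\mu_n)$ on $\mathcal{A}$ into tightness of $(\mu_n)$ on $\Pr{\mathcal{A}}$; (iii)~any weak subsequential limit $\mu'$ of $(\mu_n)$ agrees with $\mu$ on the family, and Stone--Weierstrass applied on compact subsets of $\Pr{\mathcal{A}}$ (together with tightness of $\mu$ and $\mu'$) forces $\mu'=\mu$.
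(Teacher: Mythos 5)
Your proof is correct, but it takes a genuinely different route from the paper's. The paper disposes of this lemma in one line: Stone--Weierstrass handles compact $\mathcal{A}$, and the general Polish case is passed off to a compactification (Stone--Čech). You instead invoke the Ethier--Kurtz criterion that an algebra in $C_b(E)$ on a complete separable metric $E$ which \emph{strongly separates points} is convergence determining, and you verify the hypotheses directly: the family is an algebra (closed under products by concatenating the lists $(h_i)$ and tensoring the outer functions $G$, and it contains constants by taking $G$ constant), and it strongly separates points because the weak topology on $\Pr{\mathcal{A}}$ is precisely the initial topology for the evaluations $\sigma\mapsto\int h\,d\sigma$, and each such evaluation is recovered inside the family by composing with a truncated-identity $G_i$. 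That verification is clean and correct (for equivalence with the metric-ball form of strong separation, note any ball is an open neighbourhood and any open neighbourhood contains a ball). What each approach buys: the paper's one-liner leaves the non-compact extension implicit (Stone--Čech of a Polish space is not metrizable, so the passage ``readily extends'' is really hiding the tightness issue), whereas your argument packages exactly that difficulty into a well-documented theorem. Your closing self-contained sketch --- barycenter convergence via truncated identities, tightness of $(\mu_n)$ transferred from tightness of the barycenters by Markov's inequality, and uniqueness of subsequential limits via Stone--Weierstrass on compact subsets of $\Pr{\mathcal{A}}$ --- is the standard way to unpack that black box and is also sound. The only thing worth double-checking before citing is the exact theorem number in Ethier--Kurtz; the statement you quote (strong separation implies convergence determining for complete separable $E$) is correct, but the numbering should be confirmed against the edition you reference.
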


This follows from the Stone-Weierstrass theorem in case of compact $\mathcal{A}$ and readily extends to general Polish spaces e.g.\ via Stone-Čech compactification.

\begin{lemma}
\label{lemv con det 2}
  Let $\mathcal{A}$ be a Polish space.
  The family of functions
  \begin{align}
  \label{eq:convergence.set.2}
    \set{  \mu\mapsto G\left(\int_{\mathcal{A}} h \d\mu\right)\,:\,\, h\in C_b(\mathcal{A}),  G \in C_b(\R)  }
  \end{align}
  is convergence determining for the weak topology on $\Pr{\Pr{\mathcal{A}}}$.
\end{lemma}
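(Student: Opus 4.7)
The ``only if'' direction is immediate, since every member of the family is continuous and bounded on $\Pr{\A}$. For the ``if'' direction the plan is to reduce to Lemma \ref{lemv con det 1} by upgrading pushforward convergence under single functionals to joint convergence. Writing $\phi_h(\mu) := \int h \, d\mu$ for $h \in C_b(\A)$, the hypothesis says that for every $h$ and every $G \in C_b(\R)$, $\int G \circ \phi_h \, dM_n \to \int G \circ \phi_h \, dM$; since $C_b(\R)$ is convergence-determining on $\Pr{\R}$, this is exactly $(\phi_h)_{\#} M_n \to (\phi_h)_{\#} M$ weakly in $\Pr{\R}$. To invoke Lemma \ref{lemv con det 1} I will establish the joint analogue: for every $L \in \N$ and every tuple $h_1, \dots, h_L \in C_b(\A)$, the pushforward of $M_n$ under $\Phi := (\phi_{h_1}, \dots, \phi_{h_L}) \colon \Pr{\A} \to \R^L$ converges weakly to $\Phi_{\#} M$ in $\Pr{\R^L}$.

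First I would establish tightness of $(\Phi_{\#} M_n)_n$ in $\Pr{\R^L}$. Each marginal $(\phi_{h_i})_{\#} M_n$ is weakly convergent in $\Pr{\R}$ and hence tight; given $\varepsilon > 0$, choose compact $K_i \subset \R$ with $(\phi_{h_i})_{\#} M_n(K_i) \geq 1 - \varepsilon/L$ uniformly in $n$. The compact rectangle $K_1 \times \dots \times K_L$ then carries $\Phi_{\#} M_n$-mass at least $1 - \varepsilon$, giving joint tightness.

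Next I would identify every subsequential weak limit via Cramér--Wold. For $a = (a_i) \in \R^L$ set $h_a := \sum_i a_i h_i \in C_b(\A)$ and the linear form $\pi_a(x) := \sum_i a_i x_i$, so that $\pi_a \circ \Phi = \phi_{h_a}$. By hypothesis $(\pi_a)_{\#} \Phi_{\#} M_n = (\phi_{h_a})_{\#} M_n$ converges weakly to $(\phi_{h_a})_{\#} M = (\pi_a)_{\#} \Phi_{\#} M$. So if $\nu$ is any subsequential weak limit of $\Phi_{\#} M_n$ in $\Pr{\R^L}$ (which exists by tightness), then $(\pi_a)_{\#} \nu = (\pi_a)_{\#} \Phi_{\#} M$ for every $a \in \R^L$, which by Cramér--Wold forces $\nu = \Phi_{\#} M$. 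Combined with tightness this yields $\Phi_{\#} M_n \to \Phi_{\#} M$ weakly, and an application of Lemma \ref{lemv con det 1} concludes that $M_n \to M$ weakly in $\Pr{\Pr{\A}}$.

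There is no serious obstacle: the argument is a standard pairing of marginal tightness with Cramér--Wold, feeding into the multivariate convergence-determining family supplied by Lemma \ref{lemv con det 1}. The only subtlety to keep in mind is that coincidence of one-dimensional projections pins down a limit only once subsequential limits are known to exist, so the tightness step cannot be skipped.
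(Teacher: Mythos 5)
Your proof is correct, but it takes a genuinely different route from the paper's. The paper reduces to Lemma~\ref{lemv con det 1} by \emph{approximating} the multivariate test functions $G\bigl(\int h_1\,d\mu,\dots,\int h_L\,d\mu\bigr)$: Cybenko's universal approximation theorem writes $G$ on a compact cube as a uniform limit of finite sums $\sum_i u_i\,\sigma(v_i\cdot x+w_i)$, and by linearity each summand $\sigma(v_i\cdot x+w_i)$, evaluated at $x=(\int h_1\,d\mu,\dots,\int h_L\,d\mu)$, collapses into the single-variable form $\sigma\bigl(\int \bar h_i\,d\mu\bigr)$ with $\bar h_i$ a linear combination of the $h_k$. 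You instead translate the hypothesis into weak convergence of the one-dimensional pushforwards $(\phi_h)_{\#}M_n\to(\phi_h)_{\#}M$, upgrade this to joint convergence of $\Phi_{\#}M_n\to\Phi_{\#}M$ in $\Pr{\R^L}$ via Cram\'er--Wold, and then apply Lemma~\ref{lemv con det 1}. Both reductions are sound; yours reads as standard measure-theoretic probability, while the paper's Cybenko route is more approximation-theoretic. Two small remarks. First, the tightness step is even easier than you make it: each $\phi_{h_i}$ is bounded by $\norm{h_i}_\infty$, so $\Phi_{\#}M_n$ lives on the fixed compact rectangle $\prod_i[-\norm{h_i}_\infty,\norm{h_i}_\infty]$ for every $n$; no marginal tightness argument is needed. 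Second, your closing caveat that ``the tightness step cannot be skipped'' applies to the uniqueness formulation of Cram\'er--Wold that you chose, but the convergence form of Cram\'er--Wold (via L\'evy's continuity theorem for characteristic functions) delivers $\Phi_{\#}M_n\to\Phi_{\#}M$ directly from convergence of all linear projections, with no explicit tightness argument; either phrasing is fine.
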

\begin{proof}
  Let $L$, $G$, and $(h_i)_{i\leq L}$ as in \eqref{eq:convergence.set.1}.
  Moreover, let $m\in\mathbb{R}$ such that $|h_i|\leq m$ for all $1\leq i\leq L$ and define $I:=[-m,m]^L$.
  Then $I\subset\mathbb{R}^L$ is compact and satisfies 
  \[\left(\int h_1d\mu,\dots,\int h_L\d\mu\right)\in I\quad\text{for all }\mu\in \Pr{\mathcal{A}}.\] 

  Let $\sigma\colon\mathbb{R}\to\mathbb{R}$ be some fixed bounded continuous sigmoid function such as $\sigma(r)=(1+e^{-r})^{-1}$ or $\sigma(r)=\max(0,\min(r,1))$.

  By the universal approximation result of Cybenko \cite[Theorem 2]{Cy89}, the set 
  \[\set{ x\mapsto  \sum_{i=1}^m u_i \sigma(v_i\cdot x+w_i): 
  \begin{array}{l}
  m\in\mathbb N,(u_i)_{i\leq m}\subset \mathbb{R},\\
  (v_i)_{i\leq m}\subset \mathbb{R}^L, (w_i)_{i\leq m}\subset \mathbb{R}
  \end{array} }\]
  is dense in $C(I,\mathbb{R})$ w.r.t.\ the supremum norm.
  As a result, it is enough to replace $G$ in \eqref{eq:convergence.set.1} by functions of the form $x\mapsto\sum_{i=1}^m u_i \sigma(v_i\cdot x+w_i)$. 
  Evaluating the latter function on the vector $x=(\int h_1\d\mu,\dots,\int h_L\d\mu)$ yields
  \begin{align*}
  \sum_{i=1}^m u_i \sigma\left(\sum_{k=1}^L v_i^k\int h_k \d \mu + w_i\right)
  &=\sum_{i=1}^m u_i \sigma\left(\,\int \left (\sum_{k=1}^{L+1} v_i^k h_k \right ) \d \mu\,\right)\\
  &=\sum_{i=1}^m u_i\sigma\left( \int \bar h_i \d \mu \right),
  \end{align*}
  upon defining $v_i^{L+1}:=b_i$, $w_{L+1}:=1$, and finally $\bar h_i:= \sum_{k=1}^{L+1} v_i^k h_k$ for every $i$. 
  The result follows from Lemma \ref{lemv con det 1}.
\end{proof}

\begin{lemma}
\label{lem:stopping.finer.than.information}
  The $\Wp$-optimal stopping topology is finer than the $\Wp$-information topology.
\end{lemma}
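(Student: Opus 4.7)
The plan is to establish two ingredients which, combined with Lemma \ref{lem:convergence.moments}, imply the result: (a) the $\Wp$-optimal stopping topology makes $\mu \mapsto \E[\mu]{\D(x_0, X)^p}$ continuous, and (b) it makes each disintegration map $\I_t$ continuous as a map into $\Pr{\Xb^t \times \Pr{\Xb_{t+1}}}$ with its \emph{weak} topology (which is Hellwig's bounded information topology). Ingredient (a) is straightforward: take the adapted continuous process $L_s(x) := -\sum_{i=1}^s \D_\X(x_0,x_i)^p$ (with $L_0 \equiv 0$), which lies in $AC_p(\Omega)$ since $|L_s| \leq \D_\Omega(x_0,x)^p$; as $L_s$ is nonincreasing in $s$, the optimal stopping time is $\tau = N$ and $v^L(\mu) = -\E[\mu]{\D(x_0,X)^p}$.

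For (b), a standard tensor-product argument combined with Lemma \ref{lemv con det 2} applied to $\A = \Xb_{t+1}$ shows that it suffices to verify OS-continuity of
\begin{align*}
  \mu \mapsto \int h(x_{1:t}) \, G\bigl( \E[\mu]{g(X_{t+1:N})}[X_{1:t}] \bigr) \, d\mu
\end{align*}
for every $h \in C_b(\Xb^t)$, $g \in C_b(\Xb_{t+1})$, and $G \in C_b(\R)$. The key Snell-envelope construction goes as follows. Assuming first that $h \geq 0$, fix $c \in \R$, set $M := \|h\|_\infty(\|g\|_\infty + |c|)$ and $K > 2M$, and define $L \in AC_p(\Omega)$ by $L_s := K$ for $s \notin \{t, N\}$, $L_t(x) := h(x_{1:t}) c + M$, and $L_N(x) := h(x_{1:t}) g(x_{t+1:N}) + M$. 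Because $K$ dominates both $L_t$ and $L_N$ pointwise, backward induction on the Snell envelope shows that no intermediate stopping time can be optimal, which yields
\begin{align*}
  v^L(\mu) = M + \int h(x_{1:t}) \min\!\bigl( c, \E[\mu]{g(X_{t+1:N})}[X_{1:t}] \bigr) \, d\mu.
\end{align*}
Splitting an arbitrary $h \in C_b(\Xb^t)$ as $h = h_+ - h_-$ then extends OS-continuity of $\mu \mapsto \int h \min(c, \E[\mu]{g}[X_{1:t}]) \d\mu$ to all such $h$.

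Since $\E[\mu]{g}[X_{1:t}]$ takes values in $[-\|g\|_\infty, \|g\|_\infty]$, every $G \in C_b(\R)$ is the uniform limit on that interval of $\R$-linear combinations of constants, the identity $y \mapsto y$, and maps $y \mapsto \min(c_i, y)$ for finitely many $c_i$. Each elementary functional of $\mu$ obtained this way is OS-continuous (the identity case being just $\int h g \d \mu$), so the full integral is a uniform-in-$\mu$ limit of OS-continuous maps and hence OS-continuous. Combining (a) and (b) with Lemma \ref{lem:convergence.moments} delivers $\Wp$-information convergence. The principal obstacle is verifying the Snell-envelope calculation; once the thresholds $M$ and $K$ are tuned so that $L_t$ and $L_N$ are dominated by $K$, the remainder is routine backward dynamic programming and Stone-Weierstrass approximation.
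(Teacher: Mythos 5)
Your proposal is correct and essentially mirrors the paper's own argument: both reduce (via Lemma \ref{lem:convergence.moments}) to showing $p$-th moment convergence plus convergence in the bounded information topology, both use the convergence-determining family from Lemma \ref{lemv con det 2} for $\Pr{\A\times\Pr{\B}}$ with $\A = \Xb^t$, $\B = \Xb_{t+1}$, both approximate the outer test function $G$ by piecewise-linear functions generated by $y\mapsto\min(c,y)$, and both evaluate the resulting functionals as Snell envelopes of an adapted process $L$ with non-trivial rewards only at times $t$ and $N$ and a large constant at all other times so that no other stopping can be optimal. The only cosmetic differences are in the exact parametrization: the paper takes $L_t := f\circ\overline X^t$, $L_N := (f\circ\overline X^t)(h\circ\overline X_{t+1})$, $L_s := m+1$ else, whereas you shift by a constant $M$ and restrict first to $h \geq 0$ (so that $h$ factors out of the $\min$), then split a general $h$ into $h_+ - h_-$; and for $p$-th moments you use a running sum $L_s := -\sum_{i\leq s}\D(x_0,x_i)^p$, versus the paper's $L_N := -\D(x_0,x)^p - 1$, $L_s := 0$ else. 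Your version of the Snell construction is arguably a bit cleaner in that it makes the sign assumptions explicit.
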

\begin{proof}
  The choice $L_T:=-\D(x,x_0)^p-1$ and $L_t:=0$ for $t\neq T$ shows that convergence in the $\Wp$-optimal stopping topology implies convergence of the $p$-th moments.
  Thus, we are left to show that convergence in the optimal stopping topology implies convergence in Hellwig's information topology.
  Then, by the part of Lemma \ref{lem:convergence.moments} which has already been established, we obtain convergence in the $\Wp$-information topology.
  
  Fix $1\leq t\leq N-1$ and denote by $\mathcal{A}:=\overline{\mathcal{X}}^t$ the first $t$ and by $\mathcal{B}:=\overline{\mathcal{X}}_{t+1}$ the last $N-t$ coordinates.
  As $C_b(\mathcal{A})$ is convergence determining for $\Pr{\mathcal{A}}$, and $\{\nu\mapsto G (\int_{\mathcal{B}} h\d\nu):h\in C_b(\mathcal{B}), G \in C_b(\R)\}$ is, by Lemma \ref{lemv con det 2}, convergence determining for $\Pr{\Pr{\mathcal{B}}}$, it follows e.g.\ from \cite[Proposition 4.6 (p.115)]{EtKu09} that
  \begin{align}
  \label{eq:convergence.set.3b}
  \set{ (a,\nu)\mapsto f(a)g\left(\int_\mathcal{B} h(b) \d\nu(b) \right) : f\in C_b(\mathcal{A}),\,g\in C_b(\mathbb R),\,h\in C_b(\mathcal{B}) },
  \end{align}
  is convergence determining for the weak topology on $\Pr{\mathcal{A}\times\Pr{\mathcal{B}}}$. 
  Since $h$ in \eqref{eq:convergence.set.3b} is bounded, one can actually take $g$ in \eqref{eq:convergence.set.3b} to be compactly supported. 
  But a continuous compactly supported function can be approximated uniformly by piecewise linear functions. 
  The latter are linear combinations of functions of the form $z\mapsto\min(c,dz)$ where $c,d\in\mathbb{R}$.
  It therefore follows that  
  \begin{align}
  \label{eq:convergence.set.4}
  \set{ (a,\nu) \mapsto \min\left( f(a)\,,\, \int_\mathcal{B} f(a)h(b) \d\nu(b) \right) : f\in C_b(\mathcal{A}), h\in C_b(\mathcal{B}) },
  \end{align}
  is also convergence determining for the weak topology on $\Pr{\mathcal{A}\times\Pr{\mathcal{B}}}$.
  Let $F$ be a function in \eqref{eq:convergence.set.4}, defined via $f\in C_b(\mathcal{A})$ and $h\in C_b(\mathcal{B})$, and let $m\in\mathbb{R}$ be a bound for $|f|$ and $|h|$.
  Define $L\in AC_p(\Omega)$ via
  \[L_t:=f\circ \overline{X}^t \quad L_T:=(f\circ \overline{X}^t)  \cdot (h\circ \overline{X}_{t+1}) \quad\text{and } L_s:=m+1\text{ for }s\neq t,T. \]
  (Where $\overline{X}^t$ is the projection onto the first $t$ coordinates and $\overline{X}_{t+1}$ is the projection onto the remaining $N-t$ coordinates.)

  By dynamic programming (the Snell-envelope theorem) one has 
  \begin{align*}
    v^L(\mu)&= \E[\mu]{ \min\left( f(\overline{X}^t), \E[\mu]{ f(\overline{X}^t)h(\overline{X}_{t+1}) | \overline{X}^t }\right) }\\
    &=\int_{\mathcal{A}\times\Pr{\mathcal{B}}} F \d(\disint {\mathcal{A}} {\mathcal{B}}(\mu))
  \end{align*}
  for every $\mu\in\Pr{\mathcal{A}\times\mathcal{B}}$.
  This implies that the optimal stopping topology is finer than the initial topology of $\mu\mapsto \int F \d (\disint {\mathcal{A}} {\mathcal{B}}(\mu))$ over $F$ in \eqref{eq:convergence.set.4}.
  As \eqref{eq:convergence.set.4} is convergence determining for the weak topology on $\Pr{\mathcal{A}\times\Pr{\mathcal{B}}}$, the optimal stopping topology is indeed finer than the information topology, and as observed at the beginning of this proof therefore the $\Wp$-optimal stopping topology is finer than the $\Wp$-information topology.
\end{proof}

\bibliography{lib/joint_biblio,lib/own}{}
\bibliographystyle{abbrv}
\makeatletter\@input{lib/ModulusOfContinuity_aux.tex}\makeatother

\end{document}